\definecolor{dark_blue}{RGB}{46,87,144}
\newcommand\rev[1]{\textcolor{black}{#1}}
\newcommand{\be}{\begin{equation}}
\newcommand{\ee}{\end{equation}}
\newcommand\n[1]{{\|#1\|}}
\newcommand{\fa}{\mathsf{a}}
\newcommand{\fc}{\mathsf{c}}
\newcommand{\fm}{\mathsf{m}}
\newcommand{\fn}{\mathsf{n}}
\newcommand{\fA}{\mathsf{A}}
\newcommand{\fC}{\mathsf{C}}
\newcommand{\fI}{\mathsf{I}}
\newcommand{\fK}{\mathsf{K}}
\newcommand{\fQ}{\mathsf{Q}}
\newcommand{\fM}{\mathsf{M}}
\newcommand{\fN}{\mathsf{N}}
\newcommand{\fP}{\mathsf{P}}
\newcommand{\fT}{\mathsf{T}}
\newcommand{\mC}{\mathcal{C}}
\newcommand{\mF}{\mathcal{F}}
\newcommand{\mL}{\mathcal{L}}
\newcommand{\mO}{\mathcal{O}}
\newcommand{\mK}{\mathcal{K}}
\newcommand{\mV}{\mathcal{V}}
\newcommand{\bb}{{\bf b}}
\newcommand{\bd}{{\bf d}}
\newcommand{\br}{{\bf r}}
\newcommand{\bq}{{\bf q}}
\newcommand{\bu}{{\bf u}}
\newcommand{\bv}{{\bf v}}
\newcommand{\bx}{{\bf x}}
\newcommand{\bw}{{\bf w}}
\newcommand{\bA}{{\bf A}}
\newcommand{\bC}{{\bf C}}
\newcommand{\bH}{{\bf H}}
\newcommand{\bI}{{\bf I}}
\newcommand{\bP}{{\bf P}}
\newcommand{\bK}{{\bf K}}
\newcommand{\bM}{{\bf M}}
\newcommand{\bN}{{\bf N}}
\newcommand{\bQ}{{\bf Q}}
\newcommand{\IC}{\mathbb{C}}
\newcommand{\IR}{\mathbb{R}}
\newcommand{\CA}{\textup{(\!(CA)\!)}}
\newcommand{\A}{\textup{(\!(A)\!)}}
\newcommand{\KL}{\operatorname{K}_{\Lambda_h}}
\newcommand{\K}{\operatorname{K}}
\newcommand{\iii}[1]{{\left\vert\kern-0.25ex\left\vert\kern-0.25ex\left\vert #1 
    \right\vert\kern-0.25ex\right\vert\kern-0.25ex\right\vert}}
\definecolor{dark_blue_pers}{RGB}{46,87,144}
\definecolor{blue_pers}{RGB}{54,104,171}
\definecolor{grey_pers}{RGB}{245,245,245}
\definecolor{red_pers}{RGB}{213,78,33}
\tikzstyle{block} = [rectangle, draw, fill=blue_mentor!30, 
\tikzstyle{objection} = [rectangle, draw, fill=red_mentor!30, 
\tikzstyle{line} = [draw, -latex']
\tikzstyle{cloud} = [draw, rectangle, rounded corners,fill=blue_pers!30, node distance=3cm,
\tikzstyle{back} = [rectangle, draw, fill=grey_pers, text width=12em, 
\tikzstyle{back_b} = [rectangle, draw, fill=blue_pers!30, 
\tikzstyle{back_g} = [rectangle, draw, fill=grey_pers!100, 
\newtheorem{lemma}{Lemma}
\newtheorem{proposition}{Proposition}
\newtheorem{remark}{Remark}
\newtheorem{theorem}{Theorem}
\newtheorem{definition}{Definition}
\newtheorem{corollary}{Corollary}
\newtheorem{assumption}{Assumption}
\def\thmheadbrackets#1#2#3{%
  \thmname{#1}\thmnumber{\@ifnotempty{#1}{ }\@upn{#2}}%
  \thmnote{ {\the\thm@notefont#3}}}
\newtheoremstyle{defbrakets}
  {}
  {}
  {\normalfont}
  {}
  {\bfseries}
  {.}
  { }
  {\thmheadbrackets{#1}{#2}{#3}}
\theoremstyle{defbrakets}
\journal{}
\begin{document}

\begin{frontmatter}
\title{Bi-Parametric Operator Preconditioning}

\author[Catolica]{Paul Escapil-Inchausp\'e}
\author[AI]{Carlos Jerez-Hanckes}

\address[Catolica]{Pontificia Universidad Cat\'olica de Chile, School of Engineering, Santiago, Chile}
\address[AI]{Universidad Adolfo Ib\'a\~nez, Faculty of Engineering and Sciences, Santiago, Chile}
\begin{abstract}
We extend the operator preconditioning framework [R.~Hiptmair, \emph{Comput. Math. with Appl.} 52 (2006), pp.~699--706] to Petrov-Galerkin methods while accounting for parameter-dependent perturbations of both variational forms and their preconditioners, as occurs when performing numerical approximations. By considering different perturbation parameters for the original form and its preconditioner, our bi-parametric abstract setting leads to robust and controlled schemes. For Hilbert spaces, we derive exhaustive linear and super-linear convergence estimates for iterative solvers, such as $h$-independent convergence bounds, when preconditioning with low-accuracy or, equivalently, with highly compressed approximations.
\end{abstract}

\begin{keyword}
Operator preconditioning \sep Galerkin methods \sep Numerical approximation \sep Iterative linear solvers
\MSC[2020] 65N22 \sep 65N30 \sep 65F10 \sep 65F08

\end{keyword}

\end{frontmatter}
\section{Introduction}\label{sec:Introduction}
Variational equations---\emph{continuous weak forms} \cite[Section 3.1]{betcke2020product}---in suitably defined reflexive Banach spaces $X$, $Y$, or equivalently \cite[Proposition A.21]{ern2004theory} as operator equations---\emph{continuous strong forms} \cite[Section 3.1]{betcke2020product}---have successfully been employed to model a plethora of phenomena, particularly in the form of integro-differential equations. In general, one can only approximate solutions by solving linear systems or \emph{matrix equations} arising from the continuous infinite-dimensional counterparts. Galerkin methods are a widely accepted choice to derive such linear systems due to their solid theoretical and practical understanding. Specifically, \emph{Petrov-Galerkin (PG) methods} provide a generic framework for operator equations with operators of the form $\fA:X\to Y'$, allowing to choose different trial and test spaces. Within PG methods, one finds \emph{Bubnov-Galerkin (BG) methods}, namely, the case when $\fA:X\to X'$ as well as PG for endomorphisms (PGE), i.e.,~$\fA:X\to X$, as in second-kind Fredholm integral equations, wherein $\fA$  is a compact perturbation of the identity in $X$.

Most relevant applications lead to large linear systems solved by iterative methods \cite{saad2003iterative} such as Krylov (subspace) methods \cite[Chapters 6 and 7]{saad1986gmres} as direct inversion quickly becomes computationally impractical. For real symmetric (resp.~complex Hermitian) positive definite matrices, the standard choice is the conjugate gradient method (CG) \cite{hestenes1952methods}, whereas the general minimal residual method (GMRES) and its $m$-restarted variant GMRES$(m)$ \cite{saad1986gmres} are common alternatives for nonsingular indefinite complex matrices. For these methods, convergence of the residual strongly depends on matrix properties inherited from the continuous (resp.~discrete) operator. Features such as the field-of-values (FoV) or singular values distributions are key to obtain residual convergence bounds \cite{sarkis2007optimal,beckermann2005some,steinbach2007numerical,nevanlinna2012convergence}. Yet, convergence for these methods can be slow, with performance commonly deteriorating as the linear system dimension increases. Thus, the need for robust preconditioning techniques.

For a linear system $\bA \bu = \bb$, in our case spawned by any PG method, preconditioning consists in the application of a (left) preconditioner $\bP$ such that
$$ 
\bP \bA \bu = \bP \bb.
$$
We say that the preconditioner $\bP$ is good if: (i) it is relatively cheap to compute; and (ii) the product $\bP \bA$ approximates the identity matrix or iterative solvers perform better than on the original linear system. In this note, we focus on the framework of operator preconditioning (OP). Successfully applied to BG methods \cite{hiptmair2006operator,christiansen2000preconditionneurs}---denoted OP-BG---, we aim at extending OP to general PG methods (OP-PG) as well as understanding the effects of numerical perturbations in iterative solvers.

Fundamentally, OP relies on finding suitable endomorphic operator equations, i.e.~mappings onto the same function spaces, leading to bounded \emph{spectral condition numbers}. In the BG setting, one has reflexive Banach spaces $X$, $V$ and an operator $\fA:X\to X'$, for which one considers another operator $\fC:V\to V'$, such that
\be\label{eq:OP_BG}
\text{(OP-BG)}\quad\begin{tikzcd}
X \arrow{r}{\fA }  & X' \arrow{d}{\fM^{-*}} \\
V' \arrow{u}{\fM^{-1}} & V\arrow{l}{\fC} 
\end{tikzcd},
\ee
with $\fM: X \to V'$ linking the domain of $\fA$ and the range of $\fC$. The preconditioning operator is then $\fP:= \fM^{-1}\fC\fM^{-*}$. Similarly, opposite-order OP has been considered for PG methods \cite{andreev2012stability}, particularly in the context of pseudo-differential operators \cite{steinbach1998construction,mclean1999boundary,hsiao2008boundary}, i.e.~for $\fA : X \to Y'$ and $\fC: Y'\to X$, with\footnote{Recently, \cite{stevenson2019uniform} proposed a construction with $\fM$, $\fN \neq \fI$, whose discretization leads to $\bM$ and $\bN$ being diagonal matrices.} $\fM=\fN=:\fI$, leading to
\be\label{eq:OP_OO}
\text{(opposite-order OP)}\quad\begin{tikzcd}[every arrow/.append style={shift left}]
X \arrow{r}{\fA }  & Y' \arrow{l}{\fC} 
\end{tikzcd}.
\ee
However, there is no known result for general OP-PG, which would encompass both OP-BG and opposite-order PG. This entails considering the following more general framework. For reflexive Banach spaces $X,$ $Y$, $V$ and $W$, and a preconditioner $\fC: V \to W'$ to $\fA: X \to Y'$, we need to build the commuting diagram:
\be\label{eq:OP_PG}
\text{(OP-PG)}\quad\begin{tikzcd}
X \arrow{r}{\fA }  & Y' \arrow{d}{\fN^{-1}} \\
W' \arrow{u}{\fM^{-1}} & V\arrow{l}{\fC} 
\end{tikzcd},
\ee
with $\fM: X \to W'$ and $\fN: V \to Y'$ linking the domain and range spaces for $\fA$ and $\fC$, and leading to an endomorphism on $X$. Notice that in this case $\fP:= \fM^{-1}\fC\fN^{-1}$ and that \eqref{eq:OP_PG} reduces to \eqref{eq:OP_BG} if $W=V$, $Y=X$ and $\fN = \fM^*$. In this regard, our main contribution is a theory for OP-PG for which we provide estimates for spectral and Euclidean condition numbers. For the latter, we make use of the synthesis operator linking the domain space and its basis expansion, thereby acknowledging the dimension dependence. 

Yet, and despite leading to bounded spectral condition numbers, OP does not necessarily ensure convergence for iterative solvers such as GMRES or GMRES$(m)$. Theoretically, one requires further assumptions on the induced problems, related primarily to the matrix FoV distribution \cite{starke1997field,liesen2012field}, to obtain linear convergence results for GMRES. Still, these bounds are pessimistic \cite{liesen2012field,kirby2010functional}, with convergence radius for GMRES close to one. This justifies the derivation of sharper convergence results at the expense of tighter assumptions on the operators. For instance, one can observe a super-linear convergence of the iterative scheme for systems derived from second-kind Fredholm operator equations. \cite{moret1997note,winther1980some,campbell1996gmres,axelsson2018superlinear}. 

Furthermore, though OP general properties are retained as the linear system dimension increases, it can quickly become impractical. A well-known example is the dual mesh-based OP---also known as multiplicative Calder\'on preconditioning---for boundary element methods \cite{hiptmair2006operator,steinbach1998construction,andriulli2008multiplicative}. Indeed, due to barycentric grid refinement, the standard method entails a dramatic increase in memory and computational costs. To counter this, low-accuracy Calder\'on preconditioners have been recently proposed with promising results \cite{bebendorf2008hierarchical,escapiljerez,fierro2020fast}. Indeed, iterative solvers' performance is seen to remain stable when building relatively coarse approximations of a given operator preconditioner. Clearly, this has no impact over the solution accuracy as this is only induced by the numerical approximation of the original problem, estimated by Strang's lemma \cite{strang1972variational} and its variants \cite{ern2004theory,di2018third}. Accordingly, we recently proposed the idea of systematically ``\emph{combining distinct precision orders of magnitude inside the resolution scheme}" \cite{escapiljerez} with successful numerical results for boundary element methods in electromagnetics \cite{escapiljerez,kleanthous2020accelerated} and acoustics \cite{fierro2020fast}, despite  hitherto the lack of rigorous proof.

Thus, we aim to provide theoretical grounds for the above observations by considering parameter-dependent perturbed problems and introducing the \emph{bi-parametric} OP paradigm (\Cref{thm:strang}), with bounds on spectral and Euclidean condition numbers with respect to perturbations. We further deduce linear (resp.~super-linear) convergence results for GMRES$(m)$ (resp.~GMRES), and present exhaustive new convergence bounds for iterative solvers when working on Hilbert spaces. Due to their generality, our results apply to diverse research areas: equivalent operators theory \cite{faber1990theory,axelsson2009equivalent,kirby2010functional}, opposite-order OP \cite{winther1980some,stevenson2019uniform}, compact equivalent OP \cite{axelssonsuperlinear,axelsson2018superlinear} and (fast) Calder\'on preconditioning \cite{andriulli2008multiplicative,escapiljerez,antoine2020introduction,fierro2020fast,hiptmair2020preconditioning}. Furthermore, these ideas could be also applied on high frequency wave propagation problems \cite{graham2017domain,galkowski2019wavenumber}, Schwarz preconditioning \cite{sarkis2007optimal,feischl2017optimal} and second-kind Fredholm operator equations \cite{atkinson1976survey,colton2012inverse}.

This manuscript is structured as follows. In \Cref{sec:problem}, we present the abstract PG setting. In \Cref{sec:first_strang} we introduce perturbed forms and state the first Strang's lemma for completeness. Next, we arrive at the bi-parametric OP framework and state our main result in \Cref{sec:bi_parametric}. Finally, we investigate the performance of iterative solvers in \Cref{sec:iterative}, and discuss new research avenues in \Cref{sec:concl}. \Cref{fig:Problem} summarizes constants and problems defined throughout this work.

\begin{figure}
\begin{minipage}{0.4\linewidth}
\begin{table}[H]
\renewcommand\arraystretch{2}
\begin{center}
\footnotesize
\begin{tabular}{
    |>{\centering\arraybackslash}m{1.2cm}
    |>{\centering\arraybackslash}m{2.4cm}
    |>{\centering\arraybackslash}m{1cm}|
    >{\centering\arraybackslash}m{3cm}
    |>{\centering\arraybackslash}m{2cm}|
    }
    \hline
\vspace{0.1cm}
Notation & Value  & Eq. \\ \hline\hline
$\K_\fA$ & $\frac{\|\fa\|}{\gamma_\fA}$  & \eqref{eq:discrete_condition_number}\\\hline
$\KL$ & $\frac{\|\Lambda\|}{\gamma_\Lambda}$  & \eqref{eq:KL}\\ \hline
$\K_\star$ & $\frac{\|\fm\|\|\fn\|\|\fc\|\|\fa\|}{\gamma_\fM\gamma_\fN\gamma_\fC \gamma_\fA}$  & \eqref{eq:kS_star}\\ \hline
$\K_{\star,\mu,\nu}$ & $\K_\star \left(\frac{1 + \mu }{1- \mu}\right)\left(\frac{1 +\nu}{1 - \nu}\right)$ \ &  \eqref{eq:k_mu_nu}\\ \hline
$ \overline{\sigma}_k(\fK)$ & $\frac{1}{k} \sum_{j=1}^k \sigma_j(\fK)$ & \rev{\eqref{eq:partial_singular_values}}\\ \hline 
\end{tabular}
\end{center} 
\end{table} 
\end{minipage}
\begin{minipage}{0.001\linewidth}
\hspace{\fill}
\end{minipage}
\begin{minipage}{0.59\linewidth}
\begin{table}[H]
\renewcommand\arraystretch{1.7}
\begin{center}
\footnotesize
\begin{tabular}{
    |>{\centering\arraybackslash}m{1.2cm}
    |>{\centering\arraybackslash}m{5cm}
    |>{\centering\arraybackslash}m{1cm}|
    }
    \hline
\vspace{0.1cm}
Problem & Problem in matrix form & Eq. \\ \hline\hline
\A &  $\bA \bu = \bb$ &\eqref{eq:matrix_weak}\\\hline
\A$_\nu$ &  $\bA_\nu \bu_\nu = \bb_\nu$ &\eqref{eq:matrix_weak_perturbed}\\\hline
\CA &  $\bM^{-1} \bC \bN^{-1} \bA \bu = \bM^{-1} \bC \bN^{-1}\bb$ &\eqref{eq:prec_matrix_strong}\\\hline
\CA$_{\mu,\nu}$ &   \multirow{2}{*}{$\bM^{-1} \bC_\mu \bN^{-1} \bA_\nu \bu_\nu = \bM^{-1} \bC_\mu \bN^{-1}\bb_\nu$ }&\eqref{eq:prec_matrix_strong_perturbed}\\\cline{1-1}\cline{3-3}
\CA$_{\mu,\nu}^p$& & \eqref{eq:CAmnp}\\ \hline 
\A$^p$& $\bN^{-1} \bA \bu = \bN^{-1} \bb$ &\eqref{eq:Ap}\\ \hline 
\end{tabular}
\end{center} 
\end{table} 
\end{minipage}
\caption{Comprehensive review of the constants (left) and problems (right) defined throughout this manuscript, along with their corresponding introductions. Convergence radius $\Theta_k^{(m)}$ and $\widetilde{\Theta}_k^{(m)}$ for the preconditioned GMRES\rev{($m$)} are defined in \eqref{eq:ConvRates}.}
\label{fig:Problem}
\end{figure}
\section{Continuous, Discrete and Matrix Problem Statements}\label{sec:problem}
Let $X$ and $Y$ be two reflexive Banach spaces and let $\fa \in \mL(X\times Y;\IC)$ be a continuous complex sesqui-linear---weak---form with norm $\|\fa\|$. We tag dual spaces by prime ($'$) and adjoint operators by asterisk (${}^*$). For a linear form $b \in Y'$, the \emph{weak continuous problem} is
\be\label{eq:continuous_weak} 
\text{seek}\quad u \in X \quad\text{such that}\quad \fa(u,v)=b(v),\quad \forall \ v\in Y. 
\ee
Throughout, we assume for each $b\in Y'$ the existence of a unique continuous solution $u$ to \eqref{eq:continuous_weak}. The form $\fa$ induces a---strong---bounded linear operator $\fA\in \mL(X;Y')$ defined through the \emph{dual pairing} in $Y$ as follows \cite[Proposition A.21]{ern2004theory}
\be\label{eq:dual_pairing} 
\langle \fA u, v \rangle_{Y'\times Y}:= a(u,v) ,\quad \forall \ u \in X, ~ \forall \ v \in Y.
\ee
Hence, \eqref{eq:continuous_weak} is equivalent to the \emph{strong continuous problem}: 
\be \label{eq:strongcontprob}
\text{seek}\quad u \in X \quad\text{such that}\quad \fA u = b.
\ee 

Given an index $h>0$, we introduce finite-dimensional \emph{conforming spaces}, i.e.~$X_h \subset X$ and $Y_h\subset Y$, and assume that $\dim(X_h)=\dim(Y_h)=:N$, with $N\rightarrow\infty$ as $h\rightarrow0$. Customarily, $h$ relates to the mesh-size of finite or boundary elements approximations.\footnote{For the sake of simplicity, the problems under consideration are defined for a given $h>0$ although asymptotic considerations are key in proving properties such as \emph{$h$-independent} condition numbers, i.e.~remaining bounded as $h\to 0$ (cf.~\Cref{remark:Asymptotics}).}

The counterpart of \eqref{eq:continuous_weak} is the \emph{weak discrete problem}:
\be\label{eq:discrete_weak}
\text{find}\quad u_h \in X_h \quad\text{such that}\quad \fa(u_h,v_h)=b(v_h),\quad \forall \ v_h\in Y_h,
\ee
The above admits a unique solution $u_h$ \cite[Theorem 2.22]{ern2004theory} if $\fa$ satisfies the discrete inf-sup---Banach-Ne\v{c}as-Babu\v{s}ka (BNB)---condition, for a constant $\gamma_\fA > 0$:
\be\label{eq:BNBh}
\sup_{v_h\in Y_h \setminus \{\boldsymbol{0}\}}\frac{|\fa(u_h , v_h)|}{\n{v_h}_Y} \geq \gamma_\fA \n{u_h}_X > 0 ,~\forall \ u_h \in X_h.
\ee 

\begin{assumption}\label{ass:BNB}
Throughout, we assume that $\fa$ is continuous and satisfies the BNB condition \eqref{eq:BNBh}.
\end{assumption}
Equivalently, we define the discrete operator $\fA_h : X_h \to Y_h'$: 
\be\label{eq:discrete_operator_def}
\langle \fA_h u_h,v_h\rangle_{Y_h'\times Y_h}:= \fa(u_h,v_h),\quad \forall \ u_h \in X_h, \ \forall v_h \in Y_h,
\ee
and $b_h \in Y_h'$ such that $b_h(v_h) := b(v_h)$ for all $v_h \in Y_h$, wherein the norms of $b_h$ and $\fA_h$ are given by (refer to \cite[Section 4.2.3]{sauter2010boundary}):
\be\label{eq:discrete_op_norm}
\|b_h\|_{Y_h'} := \sup_{v_h \in Y_h \setminus \{\boldsymbol{0}\}} \frac{|\fa (u_h,v_h)|}{\n{v_h}_{Y_h}}\quad \text{and} \quad \n{\fA_h}_{X_h \to Y'_h}  : = \sup_{u_h \in X_h \setminus \{\boldsymbol{0}\}} \frac{\n{\fA_h u_h}_{Y_h'}}{\n{u_h}_{X_h}}.
\ee
Consequently, the \emph{strong discrete problem} related to \eqref{eq:strongcontprob} reads
\be\label{eq:discrete_strong}
\text{seek}\quad u_{h} \in X_h \quad\text{such that}\quad \fA_h u_h=b_h.
\ee
One can introduce the \emph{discrete condition number}:
\be\label{eq:discrete_condition_number} 
\kappa (\fA_h) : = \n{\fA_h}_{{X_h\to Y_h'}}\n{\fA^{-1}_h}_{{Y_h'\to X_h}} \leq \gamma_\fA^{-1}\|\fa\|=:\K_\fA,
\ee
with $\K_\fA$ being referred to as \emph{BNB condition number}, not to be confused with the BNB condition \eqref{eq:BNBh}.

Pick bases such that $\textrm{span}\{\varphi_i\}_{i=1}^N = X_h\subset X$ and $\textrm{span}\{\phi_i\}_{i=1}^N= Y_h\subset Y$, and write the corresponding coefficient vectors in $\IC^N$ for the basis expansion in bold letters, e.g.,
\begin{align*}
u_h \in X_h : &\quad u_h = \sum_{i=1}^N u_i \varphi_i , ~\quad \bu := (u_i)_{i=1}^N \in \IC^N,\\
v_h \in Y_h : &\quad v_h = \sum_{i=1}^N v_i \phi_i , ~\quad \bv := (v_i)_{i=1}^N \in \IC^N,
\end{align*}
and build the (stiffness) Galerkin matrix and right-hand side 
$$
\bA :=(\fa(\varphi_j, \phi_i))_{i,j=1}^N,\quad \bb : = (b_h(\phi_i))_{i=1}^N.
$$
It holds that
$$
\langle \fA u_h, v_h\rangle_{Y' \times Y}=\langle \fA_h u_h,v_h\rangle_{Y_{h}'\times Y_{h}} = (\bA \bu, \bv)_2,
$$
where $(\bu,\bv )_2$ denotes the Euclidean inner product in $\IC^N$ with induced norm $\n{\bu}_2=\sqrt{(\bu,\bu)_2}$. The matrix norm is 
$$\n{\bA}_2 : = \max_{\bu \in \IC^N \setminus \{\boldsymbol{0}\}} \frac{ \n{\bA\bu}_2 }{ \n{\bu}_2}.$$

We set $\bA^H:=\overline{\bA}^T$ the conjugate transpose of $\bA$ and define vector and matrix norms induced by the Banach space setting as $\n{\bu}_{X_h}:= \n{u_h}_{X_h}$ and $\n{\bA}_{X_h \to Y_h'}:= \n{\fA_h}_{X_h\to Y_h'}$, for $\fA_h$ in \eqref{eq:discrete_op_norm}. Notice that inclusion $X_h \subset X$ ensures that $\n{\bu}_{X_h} = \n{\bu}_X =: \n{u_h}_X $.

Consequently, \eqref{eq:discrete_weak} and \eqref{eq:discrete_strong} correspond to the \emph{matrix problem} referred to\footnote{In the following, notation \textup{(\!($\cdot$)\!)} denotes matrix equations.} as \A:
\be\label{eq:matrix_weak}
\text{\A}: \quad\text{Seek}\quad \bu \in \IC^N  \quad \text{such that} \quad\bA\bu = \bb.
\ee
Next, we introduce $\Lambda_h$ the \emph{synthesis operator} for $X_h$:
\be\label{eq:synthesis}
\begin{split}
\Lambda_h : \IC^N &\to X_h\\
\bu& \mapsto  u_h,
\end{split}
\ee
along with strictly positive constants for $h>0$
\be 
\gamma_{\Lambda_{h}}  : =  \inf_{u_h\in X_h \setminus \{\boldsymbol{0}\}}\frac{\n{u_h}_X}{\n{\bu}_2} \quad\textup{and}\quad \|\Lambda_{h} \|: = \sup_{u_h\in X_h \setminus \{\boldsymbol{0}\}} \frac{\n{u_h}_X}{\n{\bu}_2}.
\ee
Notice that, for any $u_h\in X_h$, it holds that \cite[Section 2.3]{ern2006evaluation}
$$
\gamma_{\Lambda_{h}} \n{\bu}_2 \leq \n{u_h}_X \leq \|\Lambda_{h} \| \n{\bu}_2,
$$
and set
\be \label{eq:KL}
\K_{\Lambda_h} : = \frac{\|\Lambda_h\|}{\gamma_{\Lambda_h}}.
\ee

\begin{remark}\label{remark:synthesis}
One should observe the explicit use of $h$-subscripts for the synthesis operator. Indeed, while discrete inf-sup conditions are generally bounded as $h$ tends to zero, the bounds $\|\Lambda_h\|$ and $\gamma_{\Lambda_h}$ are not. For example, let $\Omega \subset\IR^d$, $d=2,3$ be a smooth bounded Lipschitz domain \cite[Section 2]{steinbach2007numerical} with boundary $\Gamma :=\partial \Omega$. For $D$ being either $\Gamma$ or $\Omega$ and $s\in [0,1]$, we introduce the Sobolev space $H^s(D)$ \cite{steinbach2007numerical} and let $X:=H^s(D)$. We assume that $D$ is decomposed into a shape regular, locally quasi-uniform mesh $\mathcal{T}$ \cite[Section 9.1]{steinbach2007numerical} with elements $\tau \in \mathcal{T}$. Set $h_\tau$ as the diameter of each element $\tau \in \mathcal{T}$, along with $h_\textup{min}:= \min_{\tau \in \mathcal{T}}h_\tau $ and $h \equiv h_\textup{max}:= \max_{\tau \in \mathcal{T}} h_\tau $, and introduce a nodal $\mathcal{C}^0$-Lagrangian basis  \cite{ainsworth1999conditioning} on $\mathcal{T}$ as $\textup{span}\{\phi_i\}_{i=1}^N= X_h \subset X$, for any $N(h) \in \mathbb{N}$. For all $u_h \in X_h$, there holds that \cite[Sections 4.4 and 4.5]{sauter2010boundary}:
\be 
 \quad Ch_\textup{min}^\frac{d}{2} \n{\bu}_2 \leq C \n{u_h}_{L^2(D)} \leq  \n{u_h}_{H^s(D)} \leq Ch_\textup{min}^{-s} \n{u_h}_{L^2(D)} \leq C h_\textup{min}^{-s} h_\textup{max}^{\frac{d}{2}} \n{\bu}_2.
\ee
Consequently, one obtains
\be\label{eq:hdep1}
\gamma_{\Lambda_h} \geq C  h_\textup{min}^\frac{d}{2} ,\quad \|\Lambda_h\|\leq C h_\textup{min}^{-s}h_\textup{max}^{\frac{d}{2}}\quad \text{and} \quad \KL \leq C \left(\frac{h_\textup{max}}{h_\textup{min}}\right)^{\frac{d}{2}}h_\textup{min}^{-s}.
\ee
For $D=\Gamma$ and $H^s(\Gamma)$, with $s\in [-1,1]$, one has
\be \label{eq:hdep2}
\KL \leq C \left(\frac{h_\textup{max}}{h_\textup{min}}\right)^{\frac{d}{2}}h_\textup{min}^{-|s|}.
\ee
In this case, one can see the synthesis operator's explicit $h$-dependence via \eqref{eq:hdep1} and \eqref{eq:hdep2}. A similar situation holds in the case of N\'ed\'elec and Raviart-Thomas (Rao-Wilton-Glisson) elements applied in electromagnetic scattering (cf.~\cite{HJM15} and references therein).
\end{remark}
For the remainder of this work, we will make extensive use of the spectral and Euclidean condition numbers, $\kappa_S (\bA)$ and $\kappa_2 (\bA)$, respectively,  defined as
\be 
\begin{split}\label{eq:kappas}
\kappa_S (\bA) := \varrho(\bA) \varrho (\bA^{-1}) = \frac{|\lambda_\textup{max}(\bA)|}{|\lambda_\textup{min}(\bA)|} \quad \text{and}\quad
\kappa_2 (\bA) := \n{\bA}_2 \n{\bA^{-1}}_2,
\end{split}
\ee
with $\varrho(\bA):= |\lambda_\textup{max}(\bA)|$ being the spectral radius of $\bA$. We denote the spectrum of $\bA$ by $\mathfrak{S}(\bA)$. Since the spectral radius is bounded by any norm on $\IC^N$, we set, for any $\fQ_h:X_h\to X_h$ with matrix representation $\bQ$, the Banach space induced norm $\n{\bQ}_X := \n{\bQ}_{X\to X}$, with $\n{\bQ}_{X} = \n{\bQ}_{X_h} =: \n{\bQ}_{X_h\to X_h}$ by inclusion $X_h \subset X$, leading to $\varrho(\bQ) \leq \n{\bQ}_{X}$. The latter is key in proving the operator preconditioning result in \Cref{thm:opprecond}.

As mentioned in \Cref{sec:Introduction}, we are concerned with the consequences of perturbing the above sesqui-linear and linear forms over discretization spaces as it occurs when employing finite-arithmetic, numerical integration or compression algorithms. To this end, we give a notion of admissible perturbations needed for the ensuing analysis.
\begin{definition}[($h,\nu$)-perturbation]\label{def:perturbation}Let $\nu \in [0,1)$ and $h>0$ be given. We say that $\fa_\nu \in \mL(X \times Y;\IC)$ is a $(h,\nu)$-perturbation of $\fa$ if it belongs to the set $\Phi_{h,\nu}(\fa)$:
\be
 \fa_\nu \in \Phi_{h,\nu}(\fa)  \iff \gamma_\fA^{-1} \left| \fa (u_h,v_h) - \fa_\nu(u_h,v_h)\right| \leq  \nu \n{u_h}_X \n{v_h}_Y ,~ \forall \ u_h \in X_h, \ \forall \ v_h\in Y_h.
\ee
Similarly, $b_{\nu}\in Y'$ is called a $(h,\nu)$-perturbation of the linear form $b$ if it belongs to the set $\Upsilon_{h,\nu}({b})$ defined as
$$ b_{\nu} \in \Upsilon_{h,\nu}(b)  \iff \left|b(v_h) - b_{\nu}(v_h) \right|\leq \nu \n{b_h}_{Y_h'}\n{v_h}_{Y_h},\quad\forall \ v_h\in Y_h. $$
We identify $\fa_0$ and $b_0$ with $\fa$ and $b$, respectively. 
\end{definition}
The $(h,\nu)$-perturbation formalism allows to control precisely the perturbed sesqui-linear (resp.~linear) form.

\begin{proposition}\label{prop:perturbed}
Consider $\fa_\nu \in \Phi_{h,\nu} (\fa)$. Then, $\fa_\nu$ has a discrete inf-sup condition and is continuous, with corresponding constants $\gamma_{\fA_\nu }$, $\|\fa_\nu\|$, satisfying
\be 
\gamma_{\fA_\nu } \geq \gamma_\fA( 1- \nu) \quad \text{and} \quad \|\fa_\nu\|\leq \|\fa\| + \nu\gamma_\fA \leq \|\fa\|(1+\nu).
\ee
\end{proposition}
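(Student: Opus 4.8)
The plan is to exploit that \Cref{def:perturbation} controls the difference $\fa - \fa_\nu$ \emph{exactly} on the discrete pairing $X_h \times Y_h$, namely
\[
|\fa(u_h,v_h) - \fa_\nu(u_h,v_h)| \le \nu\,\gamma_\fA\, \n{u_h}_X \n{v_h}_Y, \qquad \forall\, u_h\in X_h,\ \forall\, v_h \in Y_h,
\]
so that both claimed estimates follow by writing $\fa_\nu = \fa + (\fa_\nu - \fa)$ and applying the triangle (resp.\ reverse triangle) inequality on these finite-dimensional spaces, which is precisely where the discrete inf-sup and continuity constants are measured.

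For the upper (continuity) bound, I would add and subtract $\fa$ and use the triangle inequality together with \Cref{ass:BNB}, giving for all $u_h \in X_h$, $v_h \in Y_h$
\[
|\fa_\nu(u_h,v_h)| \le |\fa(u_h,v_h)| + \nu\gamma_\fA \n{u_h}_X \n{v_h}_Y \le (\|\fa\| + \nu\gamma_\fA)\,\n{u_h}_X\n{v_h}_Y,
\]
whence $\|\fa_\nu\| \le \|\fa\| + \nu\gamma_\fA$. The final inequality $\|\fa\| + \nu\gamma_\fA \le \|\fa\|(1+\nu)$ then reduces to the elementary fact $\gamma_\fA \le \|\fa\|$, which I would justify by feeding the continuity estimate into the BNB condition \eqref{eq:BNBh}: for any $u_h \in X_h\setminus\{\boldsymbol{0}\}$ one has $\gamma_\fA \n{u_h}_X \le \sup_{v_h}|\fa(u_h,v_h)|/\n{v_h}_Y \le \|\fa\|\,\n{u_h}_X$.

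For the lower (inf-sup) bound, the reverse triangle inequality yields, for every fixed $u_h \in X_h$ and every $v_h \in Y_h\setminus\{\boldsymbol{0}\}$,
\[
\frac{|\fa_\nu(u_h,v_h)|}{\n{v_h}_Y} \ge \frac{|\fa(u_h,v_h)|}{\n{v_h}_Y} - \nu\gamma_\fA \n{u_h}_X.
\]
The crucial point is that the subtracted term is independent of $v_h$, so taking the supremum over $v_h$ on both sides lets me pull it out and invoke \eqref{eq:BNBh}, obtaining
\[
\sup_{v_h \in Y_h\setminus\{\boldsymbol{0}\}} \frac{|\fa_\nu(u_h,v_h)|}{\n{v_h}_Y} \ge \gamma_\fA\n{u_h}_X - \nu\gamma_\fA\n{u_h}_X = \gamma_\fA(1-\nu)\n{u_h}_X,
\]
so that $\gamma_{\fA_\nu} \ge \gamma_\fA(1-\nu)$; since $\nu \in [0,1)$ this constant is strictly positive, which is exactly what a discrete BNB condition for $\fa_\nu$ requires.

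I do not expect a genuine obstacle here, as the statement is a stability-under-perturbation estimate. The only point needing care is the supremum manipulation in the inf-sup step: because the supremum of a difference is generally strictly smaller than the difference of suprema, one must use that the perturbation term does not depend on the test function $v_h$ in order to distribute the supremum exactly. A secondary point worth stating explicitly is that, since the perturbation bound lives only on $X_h\times Y_h$, the constant $\|\fa_\nu\|$ obtained is the discrete continuity constant relevant to the well-posedness of the perturbed discrete problem, consistent with the discrete framework of \eqref{eq:discrete_weak}--\eqref{eq:discrete_op_norm}.
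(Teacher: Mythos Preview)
Your proposal is correct and follows essentially the same route as the paper: write $\fa_\nu = \fa + (\fa_\nu-\fa)$, apply the (reverse) triangle inequality together with \Cref{def:perturbation} and \Cref{ass:BNB}, and then take the supremum over $v_h$. Your explicit remarks that $\gamma_\fA \le \|\fa\|$ and that the perturbation term is independent of $v_h$ (so the supremum distributes) make steps explicit that the paper leaves implicit, but the argument is the same.
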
 
\begin{proof} 
For any $u_h \in X_h$, it holds that
\be\label{eq:inf_sup1}
\begin{split}
\sup_{v_h \in Y_h \setminus \{\boldsymbol{0}\}} \frac{|\fa_\nu (u_h,v_h)|}{\n{v_h}_Y} &\geq\sup_{v_h \in Y_h \setminus \{\boldsymbol{0}\}}\left( \frac{|\fa (u_h,v_h)|}{\n{v_h}_Y} - \frac{|\fa (u_h,v_h)-\fa_\nu(u_h,v_h)|}{\n{v_h}_Y} \right)\\
& \geq \gamma_\fA \n{u_h}_X - \gamma_\fA \nu \n{u_h}_X = \gamma_\fA (1-\nu)\n{u_h}_X
\end{split}
\ee
by \Cref{ass:BNB} and \Cref{def:perturbation}. Similarly, for any $u_h\in X_h$ and $v_h \in Y_h$, one has
\begin{align*}
|\fa_\nu (u_h,v_h)|&\leq |\fa (u_h,v_h)| + |\fa_\nu (u_h,v_h)-\fa(u_h,v_h)| \\
&\leq ( \|\fa\| +\nu  \gamma_\fA )\n{u_h}_X \n{v_h}_Y  \leq  \|\fa\|(1 +\nu  )\n{u_h}_X \n{v_h}_Y,
\end{align*}
as stated.\end{proof}
\begin{remark}
Though the sets of admissible perturbations $\Phi_{h,\nu}(\fa)$ and $\Upsilon_{h,\nu}(b)$ depend on $h$, the perturbed forms remain continuous.  Also, for a given $h$, one may choose different parameters for each set.
\end{remark}

Set $\nu \in [0,1)$ and introduce perturbations $\fa_\nu\in\Phi_{h,\nu}(\fa)$ and $b_\nu\in\Upsilon_{h,\nu}(b)$. We arrive at the \emph{perturbed weak discrete problem}:
\be\label{eq:perturbed_discrete}
\text{seek}\quad u_{h,\nu} \in X_h \quad\text{such that}\quad \fa_\nu(u_{h,\nu},v_h)=b_\nu(v_h),\quad \forall \ v_h\in Y_h,
\ee
with strong discrete counterpart
\be\label{prob:unprecond}
 \text{find}\quad u_{h,\nu} \in X_h \quad\text{such that}\quad \fA_{h,\nu} u_{h,\nu} =b_{h,\nu},
\ee
and matrix form
\be\label{eq:matrix_weak_perturbed}
\text{\A}_\nu: \quad\text{Seek}\quad \bu_\nu \in \IC^N  \quad \text{such that} \quad\bA_\nu\bu = \bb_\nu.
\ee
Notice that \A$_0=\A$. Moreover, one can combine \Cref{prop:perturbed} with \cite[Theorem 2.22]{ern2004theory} to obtain the next result.
\begin{proposition}\label{prop:well_posedness}For $\nu \in [0,1)$, \A$_\nu$ admits a unique solution.
\end{proposition}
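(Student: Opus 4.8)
The plan is to invoke the abstract discrete Banach--Ne\v{c}as--Babu\v{s}ka well-posedness result \cite[Theorem 2.22]{ern2004theory} with the perturbed form $\fa_\nu$ in place of $\fa$, supplying its single nontrivial hypothesis---a strictly positive discrete inf-sup constant---directly from \Cref{prop:perturbed}. Since $\fa_\nu \in \Phi_{h,\nu}(\fa)$, that proposition already gives continuity of $\fa_\nu$ together with the discrete inf-sup condition with constant $\gamma_{\fA_\nu} \geq \gamma_\fA(1-\nu)$. The decisive point is that the hypothesis $\nu \in [0,1)$ forces $1-\nu > 0$, hence $\gamma_{\fA_\nu} > 0$; this is exactly where the endpoint $\nu = 1$ must be excluded, since there the lower bound on $\gamma_{\fA_\nu}$ would degenerate and well-posedness could fail. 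The perturbed right-hand side causes no difficulty: $b_\nu \in Y'$ restricts to a bounded functional on $Y_h$, so $\bb_\nu \in \IC^N$ is well defined.

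With a positive inf-sup constant secured, I would then conclude at the finite-dimensional level. Because $X_h$ and $Y_h$ are finite-dimensional with $\dim(X_h) = \dim(Y_h) = N$, problem \A$_\nu$ is equivalent to the square system $\bA_\nu \bu_\nu = \bb_\nu$ in $\IC^N$, and it suffices to show $\bA_\nu$ is nonsingular. Injectivity follows from the inf-sup estimate: if $\bA_\nu \bu_\nu = \boldsymbol{0}$, i.e.~$\fa_\nu(u_{h,\nu},v_h) = 0$ for all $v_h \in Y_h$, then the supremum defining the inf-sup condition vanishes, forcing $\gamma_{\fA_\nu}\n{u_{h,\nu}}_X \leq 0$ and thus $u_{h,\nu} = 0$. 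For a square matrix injectivity is equivalent to invertibility, so $\bA_\nu$ is nonsingular and \A$_\nu$ admits a unique solution.

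The equal-dimension hypothesis $\dim(X_h) = \dim(Y_h)$ is precisely what upgrades injectivity to bijectivity; abstractly this is the content packaged by \cite[Theorem 2.22]{ern2004theory}. I do not anticipate any genuine obstacle, as the entire argument amounts to propagating a positive inf-sup constant from \Cref{prop:perturbed} into a finite-dimensional rank statement, the only delicate detail being the strict inequality $\nu < 1$ that keeps $\gamma_{\fA_\nu}$ bounded away from zero. For completeness one could note that the same reasoning simultaneously yields the stability bound $\n{u_{h,\nu}}_X \leq \gamma_{\fA_\nu}^{-1}\n{b_{h,\nu}}_{Y_h'}$, although the statement asserts only existence and uniqueness.
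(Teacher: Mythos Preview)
Your proposal is correct and follows essentially the same approach as the paper, which simply states that the result follows by combining \Cref{prop:perturbed} with \cite[Theorem 2.22]{ern2004theory}. Your additional finite-dimensional injectivity argument merely unpacks the content of that theorem and is not needed once you invoke it.
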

\section{First Strang's lemma for perturbed forms}\label{sec:first_strang}
We start by characterizing the error between continuous and discrete solutions for the unperturbed version of \A~recalling C\'ea's lemma \cite{cea1964approximation, ern2004theory}.
\begin{lemma}[C\'ea's Lemma {\cite[Lemma 2.28]{ern2004theory}}]\label{lemmaCea}
Let $u\in X$ and $u_h\in X_h$ be the solutions to \eqref{eq:continuous_weak} and \eqref{eq:discrete_weak}, respectively. Then, one has
\be
\n{u - u_h}_X \leq  \left(1 + {\K_\fA} \right)\inf_{w_h \in X_h} \n{u-w_h}_X,
\ee
with $\K_\fA$ defined in \eqref{eq:discrete_condition_number}.
\end{lemma}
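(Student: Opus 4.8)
The plan is to combine \emph{Galerkin orthogonality} with the discrete BNB condition \eqref{eq:BNBh} and the continuity of $\fa$, which is the classical route to C\'ea-type estimates. The conformity $X_h\subset X$ and $Y_h\subset Y$ is precisely what makes the argument go through, so the first thing I would do is record the orthogonality relation it provides.

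First I would exploit that, since $Y_h\subset Y$, the continuous solution $u$ to \eqref{eq:continuous_weak} is in particular tested against discrete functions: for every $v_h\in Y_h$ we have $\fa(u,v_h)=b(v_h)$. Subtracting the discrete equation \eqref{eq:discrete_weak} then yields the Galerkin orthogonality
\be
\fa(u-u_h,v_h)=0,\qquad \forall \ v_h\in Y_h .
\ee
Next, I would fix an arbitrary $w_h\in X_h$ and split the error as $u-u_h=(u-w_h)+(w_h-u_h)$, observing that $e_h:=w_h-u_h$ belongs to $X_h$, so the discrete inf-sup condition applies to it. Using \eqref{eq:BNBh} on $e_h$ and then inserting orthogonality to replace $u_h$ by $u$ in the numerator, followed by continuity of $\fa$, gives
\begin{align*}
\gamma_\fA \n{e_h}_X
&\le \sup_{v_h\in Y_h\setminus\{\boldsymbol{0}\}}\frac{|\fa(e_h,v_h)|}{\n{v_h}_Y}
= \sup_{v_h\in Y_h\setminus\{\boldsymbol{0}\}}\frac{|\fa(w_h-u,v_h)|}{\n{v_h}_Y}
\le \|\fa\|\,\n{u-w_h}_X .
\end{align*}
Dividing by $\gamma_\fA$ yields $\n{w_h-u_h}_X\le \K_\fA\,\n{u-w_h}_X$, with $\K_\fA$ as in \eqref{eq:discrete_condition_number}.

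Finally, the triangle inequality gives $\n{u-u_h}_X\le \n{u-w_h}_X+\n{w_h-u_h}_X\le (1+\K_\fA)\n{u-w_h}_X$, and taking the infimum over $w_h\in X_h$ closes the argument. I do not expect a substantive obstacle: the only point requiring care is that Galerkin orthogonality rests entirely on conformity ($Y_h\subset Y$), which is what allows testing the continuous equation with discrete functions, while existence and uniqueness of $u_h$ are already guaranteed by \Cref{ass:BNB}. The estimate is essentially sharp in the sense that the factor $\K_\fA$ is exactly the discrete condition number bound controlling the stability-to-approximation trade-off.
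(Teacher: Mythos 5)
Your proof is correct and is precisely the classical argument behind the cited result: the paper itself does not reprove C\'ea's lemma but defers to \cite[Lemma 2.28]{ern2004theory}, whose proof follows exactly your route (Galerkin orthogonality from conformity, discrete inf-sup applied to $w_h-u_h$, continuity of $\fa$, then the triangle inequality and infimum). No gaps; the only implicit point, that $\n{\cdot}_{X_h}$ agrees with $\n{\cdot}_X$ on $X_h$ by inclusion, is already recorded in the paper's setup.
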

\begin{remark}
This fundamental result highlights the importance of the BNB condition number. It shows that if the problem has poor intrinsic conditioning for either continuous or discrete settings, then the quasi-optimality constant $(1 + \K_\fA)$ will be large and the solution $u_h$ far from the best approximation error. Observe that in \Cref{lemmaCea} both sesqui-linear and linear forms are computed exactly.
\end{remark}
Next, we present a modified version of the above lemma for perturbed problems \A$_\nu$. 
\begin{lemma}[First Strang's Lemma]\label{lemma:strang}
Set $\nu\in[0,1)$ and let $u_{h,\nu}\in X_h$ and $u\in X$ be the unique solutions to \eqref{prob:unprecond} and \eqref{eq:continuous_weak}, respectively. It holds that
\begin{align*}
\n{u - u_{h,\nu}}_X & \leq \inf_{w_h\in X_h} \left(\left(1 + \frac{{\K_\fA}}{1-\nu}\right) \n{u - w_h}_X +\frac{\nu}{1-\nu} \n{w_h}_X\right) + \frac{\nu}{\gamma_{\fA}(1-\nu)}\n{b_h}_{Y_h'}\\
&\leq \left( 1 + \K_\fA\right) \left(1 + \frac{{\K_\fA}}{1-\nu}\right)\inf_{w_h\in X_h} \n{u - w_h}_X + \frac{2\nu}{\gamma_{\fA}(1-\nu)}\n{b_h}_{Y_h'}.
\end{align*}
\end{lemma}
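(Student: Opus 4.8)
The plan is to combine a residual identity with the BNB stability of the perturbed form, and then convert the resulting $w_h$-dependent estimate into a best-approximation bound via C\'ea's Lemma. First I would fix an arbitrary $w_h \in X_h$ and, using that $u_{h,\nu}$ solves \eqref{prob:unprecond} while $u$ solves \eqref{eq:continuous_weak} (so that $b(v_h) = \fa(u, v_h)$ for every $v_h \in Y_h \subset Y$), rewrite the residual as
\[
\fa_\nu(u_{h,\nu} - w_h, v_h) = \fa(u - w_h, v_h) + \big(\fa(w_h, v_h) - \fa_\nu(w_h, v_h)\big) + \big(b_\nu(v_h) - b(v_h)\big),
\]
valid for all $v_h \in Y_h$. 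This is the analogue of Galerkin orthogonality in the perturbed, inexact setting: the first term is the genuine consistency error, while the last two quantify the perturbations of $\fa$ and of $b$.

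Next I would apply the BNB condition for $\fa_\nu$ (with constant $\gamma_{\fA_\nu}$) to the left-hand side and bound each of the three terms on the right: the first by continuity, $|\fa(u - w_h, v_h)| \leq \|\fa\|\,\n{u - w_h}_X \n{v_h}_Y$; the second and third directly by \Cref{def:perturbation}, giving $\nu \gamma_\fA \n{w_h}_X \n{v_h}_Y$ and $\nu \n{b_h}_{Y_h'}\n{v_h}_{Y_h}$ respectively (using $\n{v_h}_{Y_h} = \n{v_h}_Y$ by conformity). Dividing by $\n{v_h}_Y$ and taking the supremum yields an estimate for $\n{u_{h,\nu} - w_h}_X$; a triangle inequality $\n{u - u_{h,\nu}}_X \leq \n{u - w_h}_X + \n{w_h - u_{h,\nu}}_X$ then produces
\[
\n{u - u_{h,\nu}}_X \leq \Big(1 + \tfrac{\|\fa\|}{\gamma_{\fA_\nu}}\Big)\n{u - w_h}_X + \tfrac{\gamma_\fA}{\gamma_{\fA_\nu}}\nu\,\n{w_h}_X + \tfrac{\nu}{\gamma_{\fA_\nu}}\n{b_h}_{Y_h'}.
\]
Inserting the stability bound $\gamma_{\fA_\nu} \geq \gamma_\fA(1 - \nu)$ from \Cref{prop:perturbed} converts the coefficients into $1 + \K_\fA/(1-\nu)$, $\nu/(1-\nu)$ and $\nu/(\gamma_\fA(1-\nu))$; since the last term is independent of $w_h$, taking the infimum over $w_h \in X_h$ gives the first claimed inequality.

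The main obstacle is that this infimum couples $\n{u - w_h}_X$ and $\n{w_h}_X$, so it cannot be split into two independent best-approximation quantities. To resolve this and reach the second inequality I would simply evaluate the infimum at the distinguished choice $w_h = u_h$, the solution of the \emph{unperturbed} discrete problem \eqref{eq:discrete_weak}. For this choice two standard facts close the argument: C\'ea's Lemma (\Cref{lemmaCea}) bounds the consistency term by $\n{u - u_h}_X \leq (1 + \K_\fA)\inf_{w_h \in X_h}\n{u - w_h}_X$, while the BNB stability of $\fa$ applied to $u_h$ yields the continuous-dependence bound $\n{u_h}_X \leq \gamma_\fA^{-1}\n{b_h}_{Y_h'}$. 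Substituting both and collecting the two $b_h$ contributions into a single $\tfrac{2\nu}{\gamma_\fA(1-\nu)}\n{b_h}_{Y_h'}$ term produces the stated factor $(1 + \K_\fA)\big(1 + \K_\fA/(1-\nu)\big)$ in front of the best-approximation error. I would emphasize that this is exactly where the extra $(1 + \K_\fA)$ enters: it is the price paid for replacing the coupled infimum by the computable unperturbed Galerkin solution via C\'ea's Lemma.
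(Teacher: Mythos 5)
Your proposal is correct and follows essentially the same route as the paper's proof: the identical residual identity, BNB stability of $\fa_\nu$ combined with \Cref{prop:perturbed}, the triangle inequality and infimum for the first bound, and then the evaluation at $w_h = u_h$ together with C\'ea's Lemma and the stability bound $\n{u_h}_X \leq \gamma_\fA^{-1}\n{b_h}_{Y_h'}$ for the second. Your closing remark correctly identifies the origin of the extra factor $(1+\K_\fA)$, which is exactly how the paper obtains it.
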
 
\begin{proof}
For any $w_h\in X_h$ and for all $v_h \in Y_h$, it holds that
\begin{align*} 
\fa_\nu (u_{h,\nu} - w_h, v_h ) &= b_\nu(v_h) -\fa_\nu (w_h,v_h)  +\fa(w_h,v_h) + \fa(u-w_h ,v_h) - b(v_h)\\
& =  \fa(u-w_h ,v_h) + (\fa(w_h,v_h) -\fa_\nu (w_h,v_h) ) + (b_\nu(v_h) -b(v_h)),
\end{align*}
leading to
\be\label{eq:strang1}
\gamma_{\fA_\nu}\n{u_{h,\nu} - w_h}_X \leq \|\fa\| \n{u- w_h}_X + \nu \gamma_\fA \n{w_h}_X+ \n{b_h-b_{h,\nu}}_{Y'_h}
\ee
by \Cref{ass:BNB} and \Cref{def:perturbation}. Next, by combining the triangle inequality, and \eqref{eq:strang1}, one derives
\begin{align*}
\n{u - u_{h,\nu}}_X &\leq \n{u - w_h}_X + \n{w_h - u_{h,\nu}}_X \leq \left(1 +  \frac{\|{\fa}\| }{\gamma_{\fA_\nu}}\right)\n{u - w_h}_X + \nu \frac{\gamma_\fA}{\gamma_{\fA_\nu}}\n{w_h}_X +\frac{1}{\gamma_{\fA_\nu}} \n{b_h-b_{h,\nu}}_{Y_h'},
\end{align*}
and, since $w_h$ is arbitrary in $X_h$, there holds
\begin{align*}
\n{u - u_{h,\nu}}_X &\leq \frac{1}{\gamma_{\fA_\nu}}\n{b_h-b_{h,\nu}}_{Y'_h}+ \inf_{w_h \in X_h}\left( \left(1 + \frac{\|{\fa}\|}{\gamma_{\fA_\nu}}\right) \n{u-w_h}_X + \frac{\gamma_{\fA}}{\gamma_{\fA_\nu}}\nu\n{w_h}_{X}\right)\\
&\leq  \frac{\nu}{\gamma_\fA(1-\nu)}\n{b_h}_{Y_h'} +\inf_{w_h \in X_h}\left( \left(1 + \frac{{\K_\fA}}{1-\nu}
\right) \n{u-w_h}_X + \frac{\nu}{1-\nu} \n{w_h}_{X}\right)\\
&\leq  \frac{\nu}{\gamma_\fA(1-\nu)}\n{b_h}_{Y_h'} +\left(1 + \frac{{\K_\fA}}{1-\nu}
\right) \n{u-u_h}_X   + \frac{\nu}{1-\nu} \n{u_h}_{X}\\
&\leq  \frac{2\nu}{\gamma_\fA(1-\nu)}\n{b_h}_{Y_h'} +\left(1+ \K_\fA\right)\left( 1 + \frac{{\K_\fA}}{1-\nu}
\right) \inf_{w_h \in X_h}\n{u-w_h}_X ,
\end{align*}
as stated, by recalling the continuous dependence on $b$ for $u_h$ solution of \eqref{eq:discrete_weak}, i.e.~$\n{u_h}_X \leq \frac{1}{\gamma_\fA}\n{b_h}_{Y_h'}$, and by application of \Cref{lemmaCea}.
\end{proof}
\begin{remark}\label{remark:stabilityAnu}
Since
\begin{align*}
\kappa(\fA_{h,\nu})\leq \frac{\|\fa_{\nu}\|}{\gamma_{\fA_\nu}}=: \K_{\fA_\nu} \leq \fK_\fA \frac{1 + \nu}{1-\nu},
\end{align*}
one can expect the discrete (resp.~BNB) condition number of $\fa_\nu$ to be stable with respect to small perturbations, as 
$\K_{\fA_\nu}= \K_\fA (1 - 2\nu + o(\nu))$ for $\nu\ll 1$. For $\nu \ll 1$, \Cref{lemma:strang} shows that the perturbation implies: a best approximation error term with quasi-optimality constant $(1 + \K_\fA)^{2}$, and $\mO(\nu)$ errors induced by the perturbed sesqui-linear form and right-hand side (cf.~\cite[Sections 2 and 3]{escapiljerez}).
\end{remark}
\begin{remark}Observe that contrary to C\'ea's lemma, \Cref{lemma:strang} does not invoke the solution to the continuous perturbed problem:
\be\label{eq:continuous_weak_perturbed} 
\text{seek}\quad u_\nu \in X \quad\text{such that}\quad \fa_\nu(u_\nu,v)=b_\nu(v),\quad \forall \ v\in Y. 
\ee
Assuming the existence of a unique continuous solution $u_\nu$ to \eqref{eq:continuous_weak_perturbed}, \Cref{lemmaCea} and \Cref{remark:stabilityAnu} lead to the following quasi-optimal bound:
\be 
\n{u_\nu - u_{h,\nu}}_X \leq  \left(1 + \K_\fA\frac{1+\nu}{1-\nu} \right)\inf_{w_h \in X_h} \n{u_\nu-w_h}_X.
\ee
\end{remark}
\section{Bi-parametric Operator Preconditioning}\label{sec:bi_parametric}
We complete the setting in \Cref{sec:problem} by introducing preconditioners. To this end, let $V$ and $W$ be two reflexive Banach spaces. We consider an operator $\fc \in \mL (V \times W ; \IC)$ as well as pairings $\fn\in \mL (V\times Y; \IC)$ and $\fm \in \mL ( X\times W; \IC)$. These forms induce operators $\fC: V \to W'$, $\fN : V \to Y'$ and $\fM: X\to W'$. With these, we state the preconditioned version of the operator equation \eqref{eq:strongcontprob}:
\be
\text{seek}\quad u\in X \quad\text{such that}\quad \fP \fA u = \fP b,\quad \text{with}  \quad \fP : = \fM^{-1}\fC\fN^{-1}.
\ee

We refer the readers to \Cref{fig:Problem} and to the previous diagram in \eqref{eq:OP_PG} for an overview of domain mappings and functional spaces for OP-PG.

For our new spaces, we set conforming finite-dimensional spaces $V_h\subset V$ and $W_h\subset W$ of the same dimension $N$ as for $X_h$ and $Y_h$.

\begin{assumption}
We assume that \rev{$\fc:X \times Y$, $\fn: V \times Y$ and $\fm:X \times W$} satisfy a discrete inf-sup condition (cf.~\eqref{eq:BNBh}) over the approximation spaces, with constants $\gamma_\fC, \gamma_\fN$ and $\gamma_\fM$, respectively.
\end{assumption}

Consequently, the \emph{strong discrete preconditioned problem} 
\be
\label{eq:prec_discrete_strong}\text{seek}\quad u_{h} \in X_h \quad\text{such that}\quad \fP_h\fA_h u_h= \fP_h b_h,\quad \text{with} \quad \fP_h:=\fM_h^{-1} \fC_h \fN_h^{-1},\ee
is well posed, by the same arguments as in \Cref{prop:well_posedness}. As in \Cref{sec:problem}, we now pick bases $\{\psi_i\}_{i=1}^N \subset V_h$ and $\{\xi_i\}_{i=1}^N \subset W_h$ of $V_h$ and $W_h$, and build the Galerkin matrices 
\be 
\bC : = {((\fc( \psi_j ,\xi_i ))}_{i,j = 1}^N, \quad \bM : = {((\fm( \varphi_j , \xi_i ))}_{i,j = 1}^N \quad \text{and} \quad \bN : = {((\fn( \psi_j , \phi_i ))}_{i,j = 1}^N.
\ee
Therefore, we arrive at the matrix problem:
\be\label{eq:prec_matrix_strong}
\text{\CA}: \quad\text{find}\quad \bu \in \IC^N  \quad \text{such that}\quad \bP  \bA \bu = \bP \bb,\quad\text{with} \quad \bP:= \bM^{-1} \bC \bN^{-1}.
\ee 
\begin{table}[t]
\renewcommand\arraystretch{1.7}
\begin{center}
\footnotesize
\begin{tabular}{
    |>{\centering\arraybackslash}m{2cm}
    |>{\centering\arraybackslash}m{2cm}
    |>{\centering\arraybackslash}m{2cm}
    |>{\centering\arraybackslash}m{3cm}
    |>{\centering\arraybackslash}m{2cm}|
    }
    \hline
\vspace{0.1cm}
&Operator & sesqui-linear form & Matrix  & Constants\\ \hline\hline
Impedance & $\fA: X \to Y'$ & $\fa : X \times Y$ &$\bA : [Y_h\times X_h]$  & $\gamma_\fA, \|\fa\|$\\\hline
Preconditioner& $\fC: V\to W'$ & $\fc :V \times W$ &$\bC : [W_h\times V_h]$ & $\gamma_\fC, \|\fc\|$\\\hline\hline
Pairing $\fA$ & $\fN: V \to Y'$ & $\fn : V \times Y $ &$\bN^{-1} : [V_h\times Y_h]$& $\gamma_\fN, \|\fn\|$ \\\hline
Pairing $\fC$ & $\fM: X \to W'$ & $\fm : X\times W $ &$\bM^{-1} : [X_h\times W_h]$& $\gamma_\fM, \|\fm\|$ \\\hline
 \end{tabular}
\caption{Overview of functional spaces for OP-PG. We specify spaces for the corresponding continuous operators and sesqui-linear forms, along with their induced discrete matrices, continuity and discrete-inf sup constants. Brackets for matrices indicate the spaces associated to rows $\times$ columns.}
\label{tab:Problem}
\end{center} 
\end{table}
\begin{remark}\label{remark:OPalgebra}
As hinted in \cite{betcke2020product}, OP allows to obtain an equivalent representation for both the discrete and matrix settings, referred to as Galerkin product algebra. Indeed, introduce a unique $v_h \in V_h$ such that $ \fN_h v_h = b_h$, $w_h:=\fC_h v_h \in W'_h$, and a unique $q_h \in X_h$ such that $\fM_h q_h =  w_h$. We obtain that 
\begin{align*}
\fA_h u_h = b_h = \fN_h v_h \quad \Rightarrow \quad\fN_h^{-1} \fA_h u_h = v_h \in V_h,\\
\fC_h v_h =  w_h = \fM_h q_h \quad \Rightarrow\quad \fM_h^{-1} \fC_h v_h = q_h \in X_h,
\end{align*}
leading to matrix counterparts
\begin{align*}
\bA \bu &= \bN \bv\quad \Rightarrow \quad \bN^{-1} \bA \bu = \bv, \\
\bC \bv & = \bM \bq \quad \Rightarrow \quad \bM^{-1} \bC \bv = \bq.
\end{align*}
Hence
\be \label{eq:OPalgebra}
q_h = \fP_h \fA_h u_h\quad \text{with basis expansion} \quad \bq = \bP \bA \bu.
\ee
Consequently, $u_h = (\fP_h \fA_h)^{-1}q_h$ is with basis expansion $\bu = (\bP \bA)^{-1}\bq$.
\end{remark}
We state the following estimates for the condition numbers of $\bP \bA$.
\begin{theorem}[Estimates for OP-PG]
\label{thm:opprecond}For problem \CA~given in \eqref{eq:prec_matrix_strong}, the spectral condition number is bounded as  
\be\label{eq:kS_star}
\kappa_S (\bP\bA) \leq \kappa (\fP_h \fA_h) \leq  \frac{\|\fm\|\|\fn\|\|\fc\|\|\fa\|}{\gamma_\fM\gamma_\fN\gamma_\fC \gamma_\fA} =:\K_\star.
\ee
Furthermore, the Euclidean condition number satisfies
\be\label{eq:k2_star}
\kappa_2 (\bP\bA) \leq \K_\star \left(\frac{\n{\Lambda_h}}{\gamma_{\Lambda_h}}\right)^2= \K_\star \KL^2,
\ee
with $\KL$ introduced in \eqref{eq:KL}.
\end{theorem}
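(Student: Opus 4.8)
The plan is to exploit the defining feature of OP-PG, namely that the preconditioned operator $\fP_h\fA_h=\fM_h^{-1}\fC_h\fN_h^{-1}\fA_h$ is an endomorphism on $X_h$ whose matrix representation is precisely $\bP\bA$, as established in \Cref{remark:OPalgebra}. This reduces the entire statement to bounding the operator condition number $\kappa(\fP_h\fA_h)$ in the Banach $X$-norm, and then transferring that estimate to the spectral matrix condition number (for free) and to the Euclidean one (at the cost of synthesis-operator factors).

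For the spectral bound, the first inequality $\kappa_S(\bP\bA)\leq\kappa(\fP_h\fA_h)$ follows directly from the fact recorded before the theorem statement that the spectral radius is dominated by the induced $X$-norm. Applying $\varrho(\bQ)\leq\n{\bQ}_X$ to both $\bQ=\bP\bA$ and $\bQ=(\bP\bA)^{-1}$ and multiplying yields $\kappa_S(\bP\bA)=\varrho(\bP\bA)\,\varrho((\bP\bA)^{-1})\leq\n{\fP_h\fA_h}_X\,\n{(\fP_h\fA_h)^{-1}}_X=\kappa(\fP_h\fA_h)$. For the second inequality I would factor the composition and bound each factor by submultiplicativity:
\[
\n{\fP_h\fA_h}_X\leq\n{\fM_h^{-1}}\,\n{\fC_h}\,\n{\fN_h^{-1}}\,\n{\fA_h}\leq\frac{\n{\fc}\,\n{\fa}}{\gamma_\fM\gamma_\fN},
\]
using continuity for the forward maps ($\n{\fA_h}\leq\n{\fa}$, $\n{\fC_h}\leq\n{\fc}$) and the discrete inf-sup conditions for the inverse maps ($\n{\fN_h^{-1}}\leq\gamma_\fN^{-1}$, $\n{\fM_h^{-1}}\leq\gamma_\fM^{-1}$). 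Writing $(\fP_h\fA_h)^{-1}=\fA_h^{-1}\fN_h\fC_h^{-1}\fM_h$ and arguing symmetrically gives $\n{(\fP_h\fA_h)^{-1}}_X\leq\n{\fn}\,\n{\fm}/(\gamma_\fA\gamma_\fC)$; multiplying the two estimates produces exactly $\K_\star$.

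For the Euclidean bound I would pass through the synthesis operator $\Lambda_h$. Since any endomorphism $\fQ_h$ on $X_h$ with matrix $\bQ$ satisfies $\bQ=\Lambda_h^{-1}\fQ_h\Lambda_h$, submultiplicativity together with $\n{\Lambda_h^{-1}}=\gamma_{\Lambda_h}^{-1}$ and the definition of $\n{\Lambda_h}$ gives $\n{\bQ}_2\leq\KL\,\n{\fQ_h}_X$. Applying this to both $\bP\bA$ and $(\bP\bA)^{-1}$ then yields $\kappa_2(\bP\bA)\leq\KL^2\,\kappa(\fP_h\fA_h)\leq\K_\star\KL^2$, which is \eqref{eq:k2_star}.

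The main obstacle is conceptual rather than computational: one must read off the commuting diagram \eqref{eq:OP_PG} correctly so that the four spaces $X,Y,V,W$ and their duals compose consistently, ensuring that continuity constants attach to the forward maps $\fA_h,\fC_h$ while inf-sup constants attach to the inverse maps $\fM_h^{-1},\fN_h^{-1}$—and that these roles swap cleanly when passing to $(\fP_h\fA_h)^{-1}$. Once the factorization is set up, the norm estimates are routine.
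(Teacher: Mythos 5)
Your proposal is correct and follows essentially the same route as the paper: your submultiplicative factorization of $\fP_h\fA_h$ and of $(\fP_h\fA_h)^{-1}=\fA_h^{-1}\fN_h\fC_h^{-1}\fM_h$ is exactly the derivation behind the paper's two-sided estimate \eqref{eq:spectral}, the spectral bound is obtained identically via $\varrho(\bQ)\leq\n{\bQ}_X$, and your conjugation identity $\bQ=\Lambda_h^{-1}\fQ_h\Lambda_h$ is just a compact restatement of the paper's transfer of \eqref{eq:spectral} to the Euclidean norm through the synthesis operator, incurring the same $\KL^2$ factor.
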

\begin{proof}
Remark that, for any $u_h \in X_h$, it holds that
\be \label{eq:spectral}
\frac{\gamma_\fC \gamma_\fA}{\|\fm\|\|\fn\|}\|u_h\|_X\leq \n{\fP_h\fA_h u_h }_{X} \leq \frac{\|\fc \| \|\fa\|}{\gamma_\fN \gamma_\fM}\n{u_h}_X.
\ee
Let us introduce $\bu$ linked to $u_h$ so as to deduce that 
\be 
\n{\fP_h\fA_h}_{X} = \n{\bP \bA}_{X}\leq \frac{\|\fc \| \|\fa\|}{\gamma_\fN \gamma_\fM}\quad \text{and}\quad \n{(\fP_h\fA_h)^{-1}}_X = \n{(\bP \bA)^{-1}}_{X}\leq\frac{\|\fm\|\|\fn\|}{\gamma_\fC \gamma_\fA},
\ee 
which leads to the stated result for the spectral condition number given in \eqref{eq:kappas}, since $\varrho(\bP \bA ) \leq \n{\bP \bA}_{X}$ and $\varrho((\bP \bA)^{-1})\leq \n{(\bP \bA)^{-1}}_{X}$.

For the Euclidean condition number, we employ the synthesis operator $\Lambda_h$, introduced in \eqref{eq:synthesis}, and \eqref{eq:spectral}, to derive
\be 
\frac{1}{\|\Lambda_h\|} \left(\frac{\gamma_\fC \gamma_\fA}{\|\fm\|\|\fn\|} \right)\n{u_h}_X \leq \n{\bP \bA\bu}_2 \leq \frac{1}{\gamma_{\Lambda_h}} \left(\frac{ \|\fc\|\|\fa\|}{\gamma_\fM \gamma_\fN} \right) \n{u_h}_X,
\ee
yielding
\be 
\frac{\gamma_{\Lambda_h} }{\|\Lambda_h\|} \left(\frac{\gamma_\fC \gamma_\fA}{\|\fm\|\|\fn\|} \right)\n{\bu}_2 \leq  \n{\bP \bA\bu}_2 \leq \frac{\|\Lambda_h\|}{\gamma_{\Lambda_h}} \left(\frac{ \|\fc\|\|\fa\|}{\gamma_\fM \gamma_\fN} \right)\n{\bu}_2,
\ee
providing the second result.
\end{proof}

As mentioned in \Cref{sec:Introduction}, the abstract formulation in \Cref{thm:opprecond} for OP-PG encompasses the following important cases:
\begin{enumerate}
\item[(i)] OP-BG \cite{hiptmair2006operator,christiansen2000preconditionneurs}: $X=Y$, $V =W$ and $\fN := \fM^*$ (cf.~\eqref{eq:OP_BG});
\item[(ii)] Opposite-order OP \cite{andreev2012stability}: $Y = X'$ and $W = V'$ and $\fM = \fN := \fI$ (cf.~\eqref{eq:OP_OO}).
\end{enumerate}

We are now ready to introduce perturbed sesqui-linear forms and their preconditioners. In the spirit of \eqref{eq:perturbed_discrete}, we consider the family of \emph{bi-parametric} perturbed preconditioned problems. 

For two parameters $\mu,\nu\in [0,1)$, we define $\fc_\mu \in \Phi_{h,\mu}(\fc)$, $\fa_\nu \in \Phi_{h,\nu}(\fa)$, and $b_\nu \in \Upsilon_{h,\nu}(b)$. The \emph{perturbed preconditioned} problem reads 
\be\label{prob:precond}
\text{find}\quad u_{h,\nu} \in X_h \quad\text{such that}\quad \fP_{h,\mu} \fA_{h,\nu} u_{h,\nu} =\fP_{h,\mu} b_{h,\nu},\quad \text{with}\quad \fP_{h,\mu} : = \fM^{-1}_h\fC_{h,\mu},\fN^{-1}_h,
\ee
with corresponding matrix form
\be\label{eq:prec_matrix_strong_perturbed}
\text{\CA}_{\mu,\nu}: \quad\text{seek}\quad \bu_\nu \in \IC^N  \quad \text{such that}\quad \bP_\mu \bA_\nu \bu_\nu =\bP_\mu \bb_\nu,\quad \text{with}\quad \bP_\mu : =\bM^{-1} \bC_\mu \bN^{-1}.
\ee
Naturally, \CA$_{0,0}=$\CA. In practice, one seeks the preconditioner parameter $\mu$ to be much larger than the original system's accuracy $\nu$ while retaining the convergence properties. Indeed, we can now state our main result.

\begin{theorem}[Bi-Parametric Operator Preconditioning]\label{thm:strang}
For the problem \CA$_{\mu,\nu}$, given in \eqref{eq:prec_matrix_strong_perturbed} for $\mu,\nu\in[0,1)$ and $h>0$, the spectral condition number is bounded as
\be\label{eq:kS_mu_nu}\kappa_S (\bP_\mu\bA_\nu) \leq
 \K_\star \left(\frac{1+ \mu}{1-\mu}\right)\left(\frac{1+\nu}{1-\nu}\right)= :\K_{\star,\mu,\nu}
\ee
and the Euclidean condition number satisfies
\be\label{eq:k_mu_nu}\kappa_2 (\bP_\mu \bA_\nu) \leq \K_{\star,\mu,\nu} \K_{\Lambda_h}^2,\ee
with $\K_{\star}$ and $\K_{\Lambda_h}$ defined in \eqref{eq:KL} and \eqref{eq:kS_star}, respectively.
\end{theorem}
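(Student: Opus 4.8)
The plan is to reduce \Cref{thm:strang} directly to \Cref{thm:opprecond}, exploiting the observation that the perturbed problem \CA$_{\mu,\nu}$ shares the exact structure of \CA, with $\fA_h$ replaced by $\fA_{h,\nu}$ and $\fC_h$ replaced by $\fC_{h,\mu}$, while the pairing operators $\fM_h$ and $\fN_h$ (hence the matrices $\bM$, $\bN$) are left untouched. Thus no new estimate is required beyond what \Cref{thm:opprecond} already furnishes; one only needs to track how the four relevant constants $\|\fa\|$, $\gamma_\fA$, $\|\fc\|$, $\gamma_\fC$ transform under perturbation.

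First I would verify that the hypotheses of \Cref{thm:opprecond} continue to hold for the perturbed data. Applying \Cref{prop:perturbed} to $\fa_\nu\in\Phi_{h,\nu}(\fa)$ and to $\fc_\mu\in\Phi_{h,\mu}(\fc)$, both perturbed forms remain continuous and satisfy the discrete inf-sup condition, with
\be
\gamma_{\fA_\nu}\geq \gamma_\fA(1-\nu),\quad \|\fa_\nu\|\leq \|\fa\|(1+\nu),\quad \gamma_{\fC_\mu}\geq \gamma_\fC(1-\mu),\quad \|\fc_\mu\|\leq \|\fc\|(1+\mu).
\ee
Since $\mu,\nu\in[0,1)$, the lower bounds are strictly positive, so $\fA_{h,\nu}$ and $\fC_{h,\mu}$ are invertible and $\fP_{h,\mu}\fA_{h,\nu}$ is a well-defined endomorphism of $X_h$, exactly as in \Cref{remark:OPalgebra}.

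Next I would apply \Cref{thm:opprecond} verbatim to the perturbed operator. Its spectral estimate then reads $\kappa_S(\bP_\mu\bA_\nu)\leq \|\fm\|\|\fn\|\|\fc_\mu\|\|\fa_\nu\|/(\gamma_\fM\gamma_\fN\gamma_{\fC_\mu}\gamma_{\fA_\nu})$, where $\|\fm\|,\|\fn\|,\gamma_\fM,\gamma_\fN$ are unchanged because $\fM,\fN$ are not perturbed. Bounding the numerator from above and the denominator from below by the four inequalities above, and factoring the perturbation quotients, yields
\be
\kappa_S(\bP_\mu\bA_\nu)\leq \K_\star\,\frac{(1+\mu)(1+\nu)}{(1-\mu)(1-\nu)}=\K_{\star,\mu,\nu},
\ee
which is \eqref{eq:kS_mu_nu}. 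For the Euclidean bound I would invoke the second assertion of \Cref{thm:opprecond} in the same manner: the synthesis operator $\Lambda_h$ and the constant $\KL$ depend only on $X_h$ and its chosen basis, neither of which is affected by the perturbations, so the factor $\KL^2$ carries over unchanged and produces \eqref{eq:k_mu_nu}.

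The argument involves no genuine obstacle: its entire content is the bookkeeping of constants through the factorized bound $\K_\star$. The one point demanding care—and the step I would single out—is confirming that the perturbed forms retain the discrete inf-sup property so that \Cref{thm:opprecond} is applicable; this is precisely the role of \Cref{prop:perturbed}, and the requirement $\mu,\nu<1$ is exactly what keeps the denominators $(1-\mu)$ and $(1-\nu)$ positive.
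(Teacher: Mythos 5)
Your proposal is correct and follows essentially the same route as the paper: both rest on \Cref{prop:perturbed} to transfer continuity and discrete inf-sup constants to $\fa_\nu$ and $\fc_\mu$ (with $\fm,\fn$ untouched), and then reuse the mechanism of \Cref{thm:opprecond} to obtain the factored bound $\K_\star\bigl(\tfrac{1+\mu}{1-\mu}\bigr)\bigl(\tfrac{1+\nu}{1-\nu}\bigr)$ for $\kappa_S$ and the extra $\KL^2$ factor for $\kappa_2$. The only cosmetic difference is that you invoke \Cref{thm:opprecond} as a black box applied to the perturbed forms, whereas the paper re-states the two-sided norm inequality for $\fP_{h,\mu}\fA_{h,\nu}$ and says to repeat that proof; the content is identical.
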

\begin{proof}Application of \Cref{prop:perturbed} to $\fc_\mu$ and $\fa_\nu$ leads to:
\be 
\forall \ u_h \in X_h , \quad (1-\mu)(1-\nu)\frac{\gamma_\fC \gamma_\fA}{\|\fm\|\|\fn\|} \n{u_h}_X \leq \n{\fP_{h,\mu} \fA_{h,\nu} u_h}_X\leq \frac{\|\fc\|\|\fa\|}{\gamma_\fA\gamma_\fC}(1+\mu)(1+\nu)\n{u_h}_X,
\ee 
from where one derives the result for the spectral condition number following the proof of \Cref{thm:opprecond}. For the Euclidean condition number, the proof is similar modulo the term $\KL$ due to the synthesis operator.
\end{proof}

\begin{remark}
\Cref{thm:strang} provides bounds for both spectral and Euclidean condition numbers. Notice that \eqref{eq:k_mu_nu} involves the synthesis operators in $X_h$ (see \Cref{remark:synthesis}). Moreover, it holds that $\K_{\star,\mu,\nu} = \K_{\star,\nu,\mu}$, and $\K_{\star,\mu,\nu}$ does not involve cross-terms in $\mu$ and $\nu$. Remark that \eqref{eq:kS_mu_nu} is a sharper estimate than the previous bound in \cite[Proposition 1]{escapiljerez}. Also, we have assumed $\fM$ and $\fN$ to be exact or unperturbed but one could also extend the above results to account for perturbed pairings.
\end{remark} 

\Cref{thm:strang} constitutes the formal proof of the effectiveness of preconditioning with low-accuracy approximations hinted, for instance, by Bebendorf in \cite[Section 3.6]{bebendorf2008hierarchical}. To illustrate this, assume that the best approximation error in \Cref{lemmaCea} converges at a rate $\mO(h^r)$, $r >0$. First, \Cref{thm:strang} shows that one can set $\nu = \mO(h^{r})$ to preserve the convergence rate. Second, one can relax $\mu$ by setting a bounded $\mu=\mO(1)$ guaranteeing a bounded spectral condition number.
Consequently, the result suggests using different parameters for the assembly of $\bP_\mu$ and $\bA_\nu$. For example, one can keep standard Galerkin methods for building stiffness matrices with preconditioners built using coarser Galerkin approximations \cite{escapiljerez,kleanthous2020accelerated,fierro2020fast}, collocation methods \cite{atkinson1976survey}, compression techniques \cite{bebendorf2008hierarchical,bebendorf2009recompression}, or feedforward neural networks \cite{meade1994numerical,sappl2019deep}.

\section{Iterative Solvers Performance: Hilbert space setting}\label{sec:iterative}
Throughout \Cref{sec:iterative}, we restrict ourselves to $X\equiv H$ with $H$ being a Hilbert space with inner product ${(\cdot,\cdot)}_{{H}}$ and ${\|\cdot\|}_{H} = \sqrt{{(\cdot,\cdot)}_{H}}$. We set $\bH := ({(\varphi_j,\varphi_i)}_H)_{i,j=1}^N$, being Hermitian positive definite with $\{\varphi_i\}_{i=1}^N$ defined in \Cref{sec:problem}, satisfying
\be \label{eq:def_inner_product}
\forall \ u_h,v_h \in X_h, \quad {(u_h,v_h)_H} = {\langle \mathsf{R} u_h, v_h\rangle}_{H' \times H} = (\bH \bu,\bv)_2 =  :{(\bu,\bv)}_H,
\ee
where $\mathsf{R}$ is the isometric Riesz-isomorphism ${H}\rightarrow H'$ \cite[Section 3]{hiptmair2006operator}.

We aim at detailing how the context of \Cref{thm:opprecond} and \Cref{thm:strang} transfers onto the behavior of iterative solvers such as GMRES under the above Hilbertian setting. To this end, the following matrix properties will prove useful.

\subsection{Matrix properties: $H$-FoV}

For any $\bQ \in \IC^{N\times N}$, $N\in \mathbb{N}$, we introduce $\mF_H(\bQ)$, the matrix $H$-FoV of $\bQ$---also referred to as $H$-numerical range---and $\mV_H(\bQ)$, the distance of $\mF_H(\bQ)$ from the origin
\be\label{eq:numerical_range} 
\mF_H(\bQ):=\left \{\frac{{(\bQ \bu,\bu)}_H}{(\bu,\bu)_H} :\bu \in \IC^N \setminus \{\boldsymbol{0}\}\right \} \quad \text{and} \quad \mV_H(\bQ) := \min_{z \in \mF_H(\bQ) } |z| = \min_{\bu \in \IC^N \setminus \{\boldsymbol{0}\}} \frac{|(\bQ \bu,\bu)_H|}{(\bu,\bu)_H}.
\ee
Likewise, we introduce $\mF_2(\bQ)$, or equivalently 2-FoV, and $\mV_2(\bQ)$. Moreover, for any $\fQ_h : X_h \to X_h$, we set the discrete $H$-FoV and $\mV_H(\fQ_h)$:
\be\label{eq:discrete_HFOV}
\mF_H(\fQ_h):=\left \{\frac{{(\fQ_h u_h,u_h)}_H}{(u_h, u_h)_H} :u_h\in X_h \setminus \{\boldsymbol{0}\}\right \} \quad \text{and} \quad \mV_H(\fQ_h) := \inf_{u_h \in X_h \setminus \{\boldsymbol{0}\}} \frac{|(\fQ_h u_h,u_h)_H|}{(u_h,u_h)_H} .
\ee
We recall that the $H$-adjoint of $\bQ$ is $\bQ^{\star}:=\bH^{-1}\bQ^H\bH$, and that $\bQ$ is said to be \emph{$H$-normal} if $\bQ$ commutes with $\bQ^{\star}$ \cite[Section 2.2.1.1]{axelsson2009equivalent}.

The matrix $H$-FoV (and $2$-FoV) being key in describing the linear convergence of GMRES$(m)$, we aim at giving a further insight on these sets. Following \cite[Section 4]{benzi2016localization}, we state some useful properties of the matrix $H$-FoV. 
\begin{lemma}[Properties of the matrix $H$-FoV $\mF_H(\bQ)$]\label{lemma:numrange}
Consider any $\bQ,\bH \in \IC^{N\times N}$, $N\in \mathbb{N}$, with $\bH$ being a Hermitian positive definite matrix. The following properties hold:
\begin{enumerate}
  \item[(i)] $\mF_H(\bQ) = \mF_2(\bH^\frac{1}{2}\bQ \bH^{-\frac{1}{2}})$;
  \item[(ii)] Spectral containment: $\mathfrak{S} (\bQ) \subset \mF_H( \bQ)$;
  \item[(iii)] $H$-normal matrices: If $\bQ$ is $H$-normal, then $\mF_H(\bQ)= \textup{Conv}(\mathfrak{S} (\bQ))$ the convex hull of $\mathfrak{S}(\bQ)$;
  \item[(iv)] $\mF_H(\bQ)$ is contained in a disk centered at $0$ with radius $\n{\bQ}_H$;
  \item[(v)] $\mF_H(\bQ)$ is compact and convex.
\end{enumerate}
\end{lemma}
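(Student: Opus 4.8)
The plan is to make property (i) the cornerstone and then obtain the remaining four properties either by a one-line direct argument or by transferring a classical Euclidean fact through (i). Since $\bH$ is Hermitian positive definite, it admits a unique Hermitian positive definite (hence invertible) square root $\bH^{\frac{1}{2}}$. Setting $\bw := \bH^{\frac{1}{2}}\bu$, which is a bijection of $\IC^N \setminus \{\boldsymbol{0}\}$ onto itself, I would use $(\bu,\bu)_H = (\bH\bu,\bu)_2 = \n{\bH^{\frac{1}{2}}\bu}_2^2 = (\bw,\bw)_2$ together with $(\bQ\bu,\bu)_H = (\bH^{\frac{1}{2}}\bQ\bH^{-\frac{1}{2}}\bw,\bw)_2$ to see that every $H$-Rayleigh quotient of $\bQ$ coincides with a $2$-Rayleigh quotient of $\widetilde{\bQ} := \bH^{\frac{1}{2}}\bQ\bH^{-\frac{1}{2}}$, and conversely. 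This yields (i), and it is the engine behind (iii) and (v).

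Properties (ii) and (iv) I would dispatch directly. For (ii), if $\bQ\bu = \lambda \bu$ with $\bu \neq \boldsymbol{0}$, then the defining quotient evaluated at $\bu$ equals $\lambda$, so $\mathfrak{S}(\bQ) \subset \mF_H(\bQ)$. For (iv), Cauchy--Schwarz in the $H$-inner product gives $|(\bQ\bu,\bu)_H| \leq \n{\bQ\bu}_H \n{\bu}_H \leq \n{\bQ}_H \n{\bu}_H^2$, whence every point of $\mF_H(\bQ)$ has modulus at most $\n{\bQ}_H$.

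For (iii) and (v) I would transport the problem to $\widetilde{\bQ}$ through (i). The algebraic fact doing the work is that the $H$-adjoint corresponds to the ordinary conjugate transpose after the transformation, namely $\bH^{\frac{1}{2}}\bQ^{\star}\bH^{-\frac{1}{2}} = (\bH^{\frac{1}{2}}\bQ\bH^{-\frac{1}{2}})^H = \widetilde{\bQ}^H$, which is immediate from $\bQ^{\star} = \bH^{-1}\bQ^H\bH$ and the self-adjointness of $\bH^{\pm\frac{1}{2}}$. Consequently $\bQ$ is $H$-normal if and only if $\widetilde{\bQ}$ is normal in the usual sense, and since $\bQ$ and $\widetilde{\bQ}$ are similar they share the same spectrum. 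Then (iii) follows by composing (i) with the classical identity $\mF_2(\widetilde{\bQ}) = \textup{Conv}(\mathfrak{S}(\widetilde{\bQ}))$ valid for normal matrices, while (v) follows because $\mF_2(\widetilde{\bQ})$ is the continuous image of the compact Euclidean unit sphere (hence compact) and is convex by the Toeplitz--Hausdorff theorem, both properties transferring to $\mF_H(\bQ)$ via (i). I expect the only genuine subtlety to be the adjoint identity above, as it is precisely what licenses the identification of $H$-normality with ordinary normality; everything else is bookkeeping around the similarity $\bH^{\frac{1}{2}}(\cdot)\bH^{-\frac{1}{2}}$.
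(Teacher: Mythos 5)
Your proof is correct, and its cornerstone is exactly the paper's: both establish (i) via the substitution $\bw = \bH^{\frac{1}{2}}\bu$, identifying every $H$-Rayleigh quotient of $\bQ$ with a Euclidean Rayleigh quotient of $\widehat{\bQ} := \bH^{\frac{1}{2}}\bQ\bH^{-\frac{1}{2}}$, and then exploit this similarity. The difference lies in how the remaining items are discharged. The paper funnels all of (ii)--(v) through (i), quoting a single reference for the Euclidean facts (spectral containment, the convex-hull identity for normal matrices, the norm-disk bound, compactness and convexity). You instead prove (ii) and (iv) directly in the $H$-inner product---evaluating the quotient at an eigenvector, and Cauchy--Schwarz together with the definition of $\n{\bQ}_H$---so those two items bypass (i) entirely; and for (v) you supply the classical reasons (continuous image of the compact unit sphere, Toeplitz--Hausdorff) where the paper cites. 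For (iii) the two arguments coincide in substance: your conjugation identity $\bH^{\frac{1}{2}}\bQ^{\star}\bH^{-\frac{1}{2}} = \bigl(\bH^{\frac{1}{2}}\bQ\bH^{-\frac{1}{2}}\bigr)^H$ is correct (it uses only $\bQ^{\star}=\bH^{-1}\bQ^H\bH$ and Hermitian-ness of $\bH^{\pm\frac{1}{2}}$), and the paper performs the same computation by conjugating the $H$-normality relation $\bQ\bH^{-1}\bQ^H\bH = \bH^{-1}\bQ^H\bH\bQ$ to get $\widehat{\bQ}\widehat{\bQ}^H = \widehat{\bQ}^H\widehat{\bQ}$; both then invoke the normal-matrix convex-hull fact and invariance of the spectrum under similarity. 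What your variant buys is self-containedness---no external toolbox is needed, and (ii), (iv) become one-liners independent of the similarity; what the paper's variant buys is uniformity, every item reducing to the same cited Euclidean statement. There is no gap in either.
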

\begin{proof}Set $\widehat{\bQ} : = \bH^\frac{1}{2}\bQ \bH^{-\frac{1}{2}}$. 
\begin{itemize}
\item[(i)] For any $\bu\in \IC^N\setminus \{\boldsymbol{0}\}$, one can define $\widehat{\bu} : = \bH^\frac{1}{2} \bu $ such that
$$\frac{(\bQ\bu,\bu)_H}{(\bu,\bu)_H} =  \frac{(\bH^\frac{1}{2}\widehat{\bQ}\bH^\frac{1}{2} \bu ,\bu)_2 }{(\bH^\frac{1}{2} \bH^\frac{1}{2} \bu,\bu)_2}= \frac{(\widehat{\bQ} \widehat{\bu},\widehat{\bu})_2}{(\widehat{\bu},\widehat{\bu})_2},
$$
proving that $\mF_H (\bQ)=\mF_2(\widehat{\bQ})$, and that $\n{\widehat{\bQ}}_2 = \n{\bQ}_H$.
\item[(ii)] By \cite[Section 4, Item 1]{benzi2016localization}, one has $\mathfrak{S}(\widehat{\bQ}) \subset \mF_2 ( \widehat{\bQ})$. Clearly, $\bQ$ and $\widehat{\bQ}$ share the same spectrum.
\item[(iii)] If $\bQ$ is $H$-normal, there holds that $\bQ \bH^{-1}\bQ^H \bH = \bH^{-1} \bQ^H \bH \bQ$, hence $$\bH^\frac{1}{2} \bQ \bH^{-1}\bQ^H \bH\bH^{- \frac{1}{2}} = \bH^\frac{1}{2} \bH^{-1} \bQ^H \bH \bQ \bH^{- \frac{1}{2}},$$ leading to $\widehat{\bQ}\widehat{\bQ}^H = \widehat{\bQ}^H \widehat{\bQ}$, proving that $\widehat{\bQ}$ is normal. By \cite[Section 4, Item 10]{benzi2016localization}, we deduce that $\mF_2(\widehat{\bQ}) = \textup{Conv} (\mathfrak{S} (\widehat{\bQ}))$.
\item[(iv)] $\mF_2(\widehat{\bQ})$ is contained in a disk centered at zero with radius $\n{\widehat{\bQ}}_2$ \cite[Section 4, Item 3]{benzi2016localization}. Moreover, $\mF_H (\bQ)= \mF_2(\widehat{\bQ})$ and $ \n{\bQ}_H = \n{\widehat{\bQ}}_2$.
\item[(v)] $\mF_2(\widehat{\bQ})$ is compact and convex by \cite[Section 4, Items 7 and 12]{benzi2016localization}.
\end{itemize}
\end{proof}

\begin{figure}[t]
\centering
\includegraphics[width=.8\textwidth]{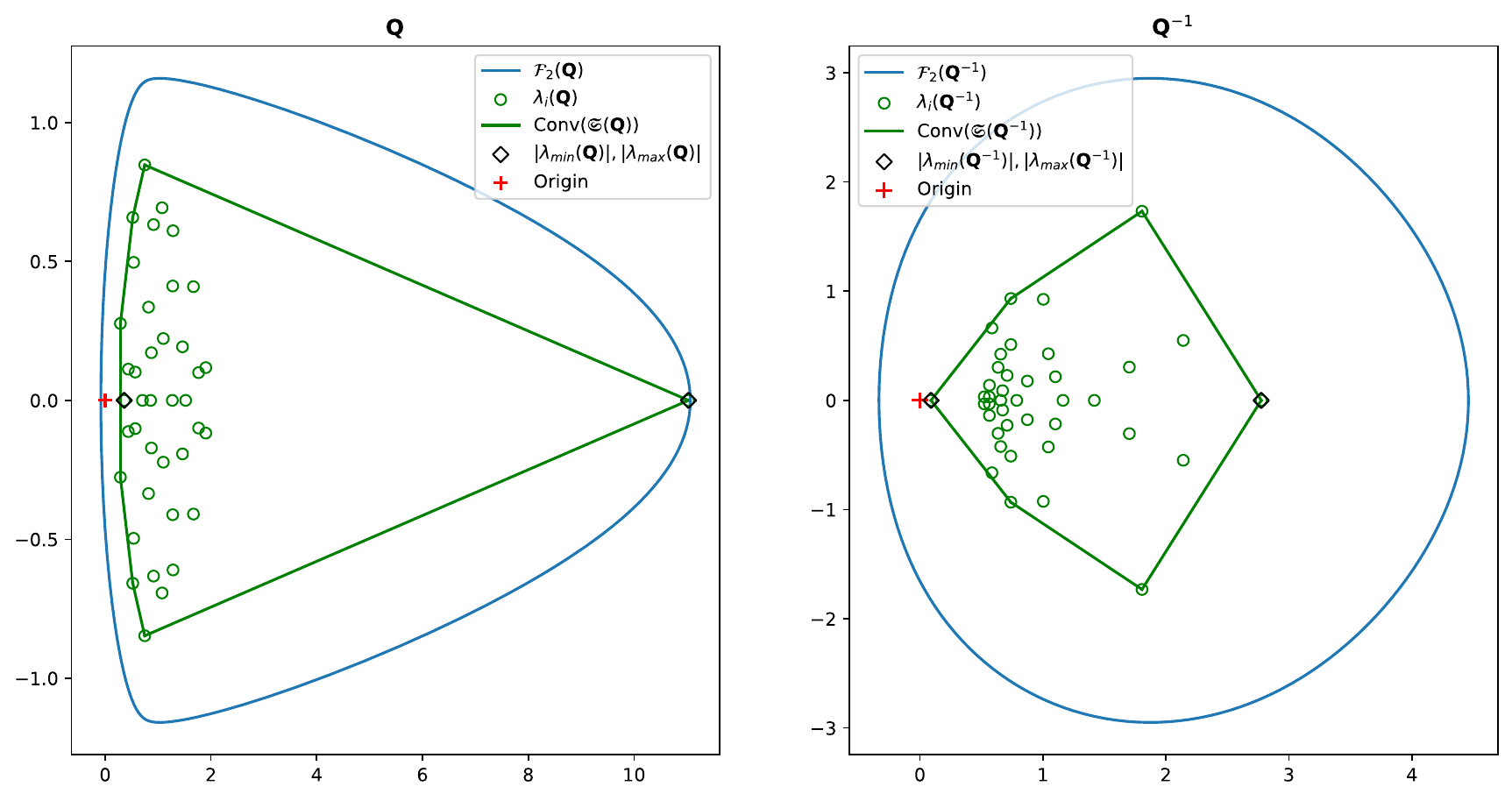}
\caption{$2$-FoV boundary (blue line), eigenvalues (green circles), convex hull for eigenvalues (green line) and $|\lambda_\text{min}|,|\lambda_\text{max}|$ (black diamonds) for a matrix $\bQ:=\bI + 0.5\mathbf{E} \in \IR^{40\times 40}$ (left) and its inverse $\bQ^{-1}$ (right). $\mathbf{E}$ is a random matrix with $\mathbf{E}_{i,j}$ uniformly distributed random numbers in $[0,1]$ for $0\leq i,j\leq 40$. Remark that $0 \neq \textup{Conv}(\mathfrak{S}(\bQ))$ (resp.~$0 \neq \textup{Conv}(\mathfrak{S}(\bQ^{-1}))$) while $0 \in \mF_2(\bQ)$ (resp.~$0 \in \mF_2(\bQ^{-1})$).}
\label{fig:numrange}
\end{figure}

\Cref{fig:numrange} illustrates the above definitions for a random matrix. Remark that: (i) $\bQ$ is invertible, as $|\lambda_\text{min}(\bQ)| >0$; (ii) $\mF_2(\bQ) \not\subset \textup{Conv}(\mathfrak{S}(\bQ))$ and $\mF_2(\bQ^{-1}) \not\subset \textup{Conv}(\mathfrak{S}(\bQ^{-1}))$; (iii) $\text{Conv}(\mathfrak{\mathfrak{S}(\bQ}))$ and $\text{Conv}(\mathfrak{S}(\bQ^{-1}))$ are bounded away from the origin, whereas $0\in \mF_2(\bQ)$ and $0\in \mF_2(\bQ^{-1})$. Moreover, one has $\kappa_S(\bQ) = 30.6$ while $\kappa_2(\bQ)= 58.3$, evidencing the non-normality of $\bQ$.

\subsection{General linear convergence estimates for GMRES$(m)$}\label{subsec:iterative}
Following \cite[Chapter 5]{graham2017domain}, let us recall the application of the weighted (resp.~Euclidean) GMRES to a linear system $\bQ \bx = \bd$ in $\IC^N$, where $\bQ\in\IC^{N\times N}$ is a complex nonsingular matrix. For an initial guess $\bx_0 \neq \bx$, we introduce the residual $\br_0 = \bd - \bQ \bx_0$ such that $\br_0 \neq 0$ as well as Krylov spaces
\be\label{eq:Krylov}
\mK^k (\bQ , \br_0) : = \textup{span} \{  \bQ^j \br_0 : j= 0,\ldots, k-1 \}, \quad  1 \leq k \leq N.
\ee
For any step $1 \leq k \leq N$, we define $\bx_k$ and $\tilde{\bx}_k$ to be the unique elements of $\mK^k(\bQ, \br_0)$ satisfying the minimal residual property under the energy and Euclidean norms:
\be
\begin{split}
\n{\br_k}_H &: = \n{\bd - \bQ \bx_k }_H = \min_{\bx \in \mK^k  (\bQ , \br_0)} \n{\bd - \bQ \bx}_H, \\
\n{\tilde{\br}_k}_2 &: = \n{\bd - \bQ \tilde{\bx}_k }_2 = \min_{\bx \in \mK^k  (\bQ , \br_0)} \n{\bd - \bQ \bx}_2,
\end{split}
\ee
respectively. We refer to either weighted or Euclidean GMRES collectively as GMRES. We add an $(m)$ superscript to signal restarted GMRES$(m)$, for any natural number $1\leq m\leq N$. Notice that GMRES and GMRES$(m)$ coincide up to iteration $m$.

\begin{lemma}[Weighted GMRES$(m$): Linear bounds]\label{lemma:Weighted_GMRES}Let $\bQ\in \IC^{N\times N}$, with $0 \neq \mF_H(\bQ)$ in \eqref{eq:numerical_range} and set $1\leq m\leq N$. Then, the $k$-th residual of weighted GMRES$(m)$ for $1\leq k\leq N$ satisfies:
\be\label{eq:elmanm}
\frac{\n{\br_k}_H}{\n{\br_0}_H} \leq \left(1 -  \mV_H(\bQ) \mV_H\left(\bQ^{-1}\right)  \right)^\frac{k}{2}.
\ee
\end{lemma}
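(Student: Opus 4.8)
The plan is to reduce the weighted-norm ($H$-norm) minimization to a standard Euclidean minimization via the isometry $\bu \mapsto \bH^{1/2}\bu$, and then apply the classical Elman-type estimate. First I would note that minimizing $\n{\bd - \bQ\bx}_H$ over the Krylov space $\mK^k(\bQ,\br_0)$ is equivalent, after conjugating by $\bH^{1/2}$, to a Euclidean GMRES problem for the transformed matrix $\widehat{\bQ} := \bH^{1/2}\bQ\bH^{-1/2}$ acting on the transformed residual $\widehat{\br}_0 := \bH^{1/2}\br_0$; this uses that $\bH^{1/2}\mK^k(\bQ,\br_0) = \mK^k(\widehat{\bQ}, \widehat{\br}_0)$, since $\bH^{1/2}\bQ^j\br_0 = \widehat{\bQ}^j\widehat{\br}_0$. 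By item (i) of \Cref{lemma:numrange}, $\mF_H(\bQ) = \mF_2(\widehat{\bQ})$, so the hypothesis $0\neq\mF_H(\bQ)$ becomes $0\notin\mF_2(\widehat{\bQ})$, i.e.\ $\mV_2(\widehat{\bQ}) = \mV_H(\bQ) > 0$.

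Next I would invoke the standard single-cycle GMRES bound of Elman type: for a matrix $\widehat{\bQ}$ with $0\notin\mF_2(\widehat{\bQ})$, the residual after one step of GMRES satisfies
\be
\frac{\n{\widehat{\br}_1}_2}{\n{\widehat{\br}_0}_2} \leq \left(1 - \frac{\mV_2(\widehat{\bQ})^2}{\n{\widehat{\bQ}}_2^2}\right)^{1/2}.
\ee
The cleanest route, however, is to rewrite the contraction factor using $\mV_2(\widehat{\bQ}^{-1}) = 1/\n{\widehat{\bQ}}_2$ when $\widehat{\bQ}^{-1}$ is available; more directly one uses the sharpened form $1 - \mV_2(\widehat{\bQ})\,\mV_2(\widehat{\bQ}^{-1})$, which follows by testing the degree-one residual polynomial $p(z) = 1 - z/\omega$ with the optimal shift and bounding the numerator and denominator by the distance-to-origin of $\mF_2(\widehat{\bQ})$ and of $\mF_2(\widehat{\bQ}^{-1})$. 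Since $\widehat{\bQ}^{-1} = \bH^{1/2}\bQ^{-1}\bH^{-1/2}$, item (i) again gives $\mV_2(\widehat{\bQ}^{-1}) = \mV_H(\bQ^{-1})$, so the single-step factor is exactly $\bigl(1 - \mV_H(\bQ)\,\mV_H(\bQ^{-1})\bigr)^{1/2}$.

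Finally I would handle the restart/iteration count. Because GMRES$(m)$ restarts every $m$ steps and the minimal-residual property guarantees monotone non-increase of the residual at every inner step, the per-cycle contraction is at least as good as the single-step bound raised to the number of minimization steps; applying the one-step estimate $k$ times (each cycle contracting by at least the single-step factor, and within a cycle the residual only decreases further) yields the stated $k/2$ power. Concretely, $\n{\br_k}_H \leq \bigl(1 - \mV_H(\bQ)\mV_H(\bQ^{-1})\bigr)^{k/2}\n{\br_0}_H$, since each of the $k$ residual-reduction steps multiplies the ratio by at most the single-step factor and restarting does not destroy this because the bound is relative to the current residual. The main obstacle is the careful justification of the single-step Elman bound in the sharpened $\mV_H(\bQ)\mV_H(\bQ^{-1})$ form rather than the cruder $\mV_H(\bQ)^2/\n{\bQ}_H^2$ form, and in verifying that the restarting mechanism composes the one-step factors correctly across cycles without losing the exponent; I would lean on \Cref{lemma:numrange}(iv) to identify $\n{\widehat{\bQ}}_2$ with the radius of the FoV disk and thereby connect the two formulations.
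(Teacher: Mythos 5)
Your proposal is correct and follows essentially the same route as the paper's proof: conjugate by $\bH^{1/2}$ to reduce weighted GMRES to Euclidean GMRES for $\widehat{\bQ} := \bH^{1/2}\bQ\bH^{-1/2}$, identify $\mF_H(\bQ) = \mF_2(\widehat{\bQ})$ and $\mF_H(\bQ^{-1}) = \mF_2(\widehat{\bQ}^{-1})$ via \Cref{lemma:numrange}(i), use the sharpened one-step bound $1 - \mV_2(\widehat{\bQ})\,\mV_2(\widehat{\bQ}^{-1})$ (which the paper simply cites from \cite{liesen2012field}, while you sketch its derivation via the optimal degree-one residual polynomial), and compose the one-step factor across inner steps and restarts exactly as the paper does with the decomposition $k = im + t$. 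One minor caveat: your parenthetical claim that $\mV_2(\widehat{\bQ}^{-1}) = 1/\n{\widehat{\bQ}}_2$ is false in general (one only has $\mV_2(\widehat{\bQ}^{-1}) \geq \mV_2(\widehat{\bQ})/\n{\widehat{\bQ}}_2^2$), but since you immediately discard it in favor of the sharpened form, the argument is unaffected.
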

\begin{proof}
We first remark that \Cref{lemma:Weighted_GMRES} for the Euclidean GMRES, i.e.~for $\bH=\bI$, is proved in \cite{liesen2012field}. Thus, we focus on the extension to weighted GMRES. Following \cite[Theorem 5.1]{graham2017domain}, we set $\widehat{\bQ} : = \bH^\frac{1}{2} \bQ \bH^{-\frac{1}{2}}$, $\widehat{\bd}:= \bH^\frac{1}{2} \bd$, $\widehat{\bx}: = \bH^\frac{1}{2}\bx$ and $\widehat{\br}_0 := \bH^\frac{1}{2} \br_0$. Application of the Euclidean GMRES to $\widehat{\bQ}\widehat{\bx} = \widehat{\bd}$ yields 
\be \label{eq:proof_linear}
\frac{\n{\widehat{\br}_k}_2}{\n{\widehat{\br}_0}_2} \leq \left(1 -  \mV_2 (\widehat{\bQ} ) \mV_2(\widehat{\bQ}^{-1})  \right)^\frac{k}{2}.
\ee
By \Cref{lemma:numrange}, we obtain that $\mF_H (\bQ)=\mF_2(\widehat{\bQ})$ and $\mF_H (\bQ^{-1})= \mF_2(\bH^{1/2} \bQ^{-1} \bH^{-1/2}) =\mF_2(\widehat{\bQ}^{-1})$. Consequently, by \eqref{eq:proof_linear} we derive the final bound for the weighted GMRES
\be 
\label{eq:GMRES_weight}
\frac{\n{\br_k}_H}{\n{\br_0}_H} \leq \left(1 -  \mV_H (\bQ) \mV_H(\bQ^{-1})  \right)^\frac{k}{2}=\rho^k,
\ee
with $\rho :=  \left(1 - \mV_H (\bQ) \mV_H(\bQ^{-1})\right)^\frac{1}{2}$ and $\rho < 1$ by \cite[Section 1]{liesen2012field}. Finally, we remark that $\rho$ above does not depend on $k$: it provides a one-step bound. Therefore, we set a restart $1 \leq m \leq N $ and define $k= : i m + t$ with $0 \leq t < m$ and $0 \leq i \leq \lfloor \frac{N}{m}\rfloor$. By application of \eqref{eq:GMRES_weight}, there holds that:
\be 
\n{\br_{im + t}}_H  \leq \rho^t \n{\br_{im}}_H,
\ee 
and thus, $\n{\br_k}_H \leq \rho^k \n{\br_0}_H$, leading to the expected result for GMRES$(m)$.
\end{proof}
\begin{remark}
\Cref{lemma:Weighted_GMRES} provides a linear convergence bound for weighted GMRES$(m)$ with respect to $\mV_H(\bQ)$ and $\mV_H(\bQ^{-1})$ and constitutes a sharper version of the classic result for weighted GMRES in \cite{graham2017domain,sarkis2007optimal}. Indeed, for $\bQ$ in \Cref{lemma:Weighted_GMRES} there holds that \cite[Section 1]{liesen2012field}:
$$ 1 - \mV_H(\bQ) \mV_H(\bQ^{-1}) \leq 1- \frac{\mV_H(\bQ)^2}{\|\bQ\|_H^2} < 1.$$
\end{remark}
\subsection{Discrete $\langle X_h\rangle$- and $(X_h)$-coercivity}
For the ensuing GMRES analysis of our preconditioned problem \CA, we need precise definitions for coercivity that relate to the BNB condition for Hilbert spaces. We introduce the notion of (discrete) $\langle X_h\rangle$-coercivity, with angle brackets referring to the dual pairing in \eqref{eq:dual_pairing} (refer to \cite[Section 5]{chandler2012numerical}). 

\begin{definition}[$\langle X_h\rangle$-coercivity]\label{def:discrete_coercivity_dual}Consider $\fA:X \to X'$ as in the BG case with $X$ being a Hilbert space. For $h>0$ given, $\fA$ is said to be $\langle X_h \rangle $-coercive if there exists $\alpha_\fA$ such that 
\be 
0 <\alpha_\fA \leq\frac{|\fa(u_h,u_h)|}{\n{u_h}_X^2} = \frac{|\langle \fA u_h,u_h\rangle_{X'\times X}|}{\n{u_h}_X^2} \quad \forall \ u_h \in X_h  \setminus \{\boldsymbol{0}\}.
\ee
\end{definition}
Thus, discrete $\langle X_h \rangle$-\emph{ellipticity} refers to self-adjoint operators satisfying the $\langle X_h\rangle$-coercivity condition. These definitions extend naturally to continuous $\langle X\rangle$-coercivity and -ellipticity.
\begin{remark}[BNB condition and $\langle X_h \rangle$-coercivity]\label{rmk:inf-sup}
As pointed out for $\langle X \rangle$-coercivity in \cite[Lemma 2.8]{ern2004theory} , $\langle X_h \rangle$-coercivity for $\fA$ in \Cref{def:discrete_coercivity_dual} provides a BNB constant $\gamma_\fA = \alpha_\fA$, since for any $u_h \in X_h \setminus \{\boldsymbol{0}\}$, it holds that
\be 
0 < \alpha_\fA \n{u_h}_X \leq \frac{ |\fa(u_h,u_h)|}{\n{u_h}_X} \leq \sup_{v_h \in X_h \setminus \{\boldsymbol{0}\}} \frac{ |\fa(u_h,v_h)|}{\n{v_h}_X}.
\ee
\end{remark}
\begin{remark}
$\langle X_h \rangle$-coercivity is a common property for BG methods in Hilbert spaces. For example, under suitable assumptions on the discretization scheme, operators with a G\aa rding inequality on $X$---of the form $\fA = \fA_0 +\fK:X \to X'$ with $\fA_0$ $\langle X \rangle$-coercive and $\fK$ compact \cite[Section 2.1]{sauter2010boundary}--- admit a $h_0>0$ such that $\fA$ is $\langle X_h \rangle$-coercive for all $0 < h \leq h_0$ \cite[Section 4.2.3]{sauter2010boundary}. 
\end{remark}

Similarly to \Cref{def:discrete_coercivity_dual}, we introduce the discrete $(X_h)$-coercivity, the difference being the use of inner products.
\begin{definition}[$(X_h)$-coercivity]\label{def:H_coercivity}Consider $\fA:X\to X$ for the PGE case with $X=: H$ being a Hilbert space. $\fA$ is said to be $(X_h)$-coercive if, for any $u_h \in X_h \setminus \{\boldsymbol{0}\}$, there exists $\alpha_\fA >0$ such that 
\be 
\alpha_\fA  \leq  \frac{ \left|(\fA u_h ,u_h)_H\right| }{(u_h,u_h)_H},
\ee
or equivalently 
\be\label{eq:HFOVcoercivity}
\alpha_\fA  \leq  \inf_{u_h \in X_h \setminus \{\boldsymbol{0}\}} \frac{ \left|(\fA u_h ,u_h)_H\right| }{(u_h,u_h)_H}= \mV_H(\fA_h).
\ee
\end{definition}
\Cref{def:H_coercivity} via \eqref{eq:HFOVcoercivity} shows the strong connection between $(X_h)$-coercivity and discrete $\mV_H$. Furthermore, under the OP setting, the discrete and matrix $H$-FoVs coincide.
\begin{lemma}\label{lemma:coicideFoV}Consider \CA~with $X=:H$ a Hilbert space with inner product $(\cdot,\cdot)_H$. There holds that
\begin{enumerate}
\item[(i)]  $\mF_H(\fP_h\fA_h) = \mF_H (\bP \bA)$ and $\mF_H((\fP_h\fA_h)^{-1}) = \mF_H ((\bP \bA)^{-1})$;
\item[(ii)] $\mV_H(\fP_h\fA_h) = \mV_H (\bP \bA)$ and $\mV_H((\fP_h\fA_h)^{-1}) = \mV_H ((\bP \bA)^{-1})$.
\end{enumerate}
\end{lemma}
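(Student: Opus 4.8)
The plan is to transport the discrete $H$-FoV onto the matrix $H$-FoV quotient by quotient, using the synthesis operator $\Lambda_h$ from \eqref{eq:synthesis} as the bridge between functions in $X_h$ and their coefficient vectors in $\IC^N$. Since $\Lambda_h:\IC^N\to X_h$ is a linear bijection, the map $\bu\mapsto u_h=\Lambda_h\bu$ restricts to a bijection between $\IC^N\setminus\{\boldsymbol{0}\}$ and $X_h\setminus\{\boldsymbol{0}\}$; this is precisely what will let me identify the index sets appearing in \eqref{eq:discrete_HFOV} and in \eqref{eq:numerical_range}, so that no point of either field of values is lost or spuriously added.

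First I would record the coefficient-level realisation of the endomorphism $\fP_h\fA_h$. By the Galerkin product-algebra observation in \Cref{remark:OPalgebra}, if $u_h$ has basis expansion $\bu$ then $\fP_h\fA_h u_h\in X_h$ has expansion $\bP\bA\bu$, and likewise $(\fP_h\fA_h)^{-1}u_h\in X_h$ has expansion $(\bP\bA)^{-1}\bu$. Thus both $\fP_h\fA_h$ and its inverse map $X_h$ into itself, and their actions are represented at the coefficient level by $\bP\bA$ and $(\bP\bA)^{-1}$, respectively.

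Next I would combine this with the inner-product identity \eqref{eq:def_inner_product}. For any $u_h\in X_h\setminus\{\boldsymbol{0}\}$ with coefficients $\bu$, applying \eqref{eq:def_inner_product} to the pair $(\fP_h\fA_h u_h,u_h)$—both members of $X_h$, with expansions $\bP\bA\bu$ and $\bu$—gives
\be
\frac{(\fP_h\fA_h u_h,u_h)_H}{(u_h,u_h)_H}=\frac{(\bP\bA\bu,\bu)_H}{(\bu,\bu)_H}.
\ee
As $u_h$ ranges over $X_h\setminus\{\boldsymbol{0}\}$ the vector $\bu$ ranges over all of $\IC^N\setminus\{\boldsymbol{0}\}$, so the two families of quotients in \eqref{eq:discrete_HFOV} and \eqref{eq:numerical_range} coincide as subsets of $\IC$; this yields $\mF_H(\fP_h\fA_h)=\mF_H(\bP\bA)$. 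Repeating the computation with $(\fP_h\fA_h)^{-1}$ and $(\bP\bA)^{-1}$ in place of $\fP_h\fA_h$ and $\bP\bA$ gives the second equality of part (i). Part (ii) is then immediate, since $\mV_H$ is by definition the distance from the origin of the corresponding $H$-FoV, and equal sets share the same distance from the origin.

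The argument is essentially bookkeeping, so there is no serious obstacle; the only point requiring care is the correct identification of $\bP\bA$ (and of its inverse) as the coefficient-level representation of $\fP_h\fA_h$, which is exactly what \Cref{remark:OPalgebra} provides, together with the elementary check that $\Lambda_h$ carries nonzero vectors to nonzero functions so that the bijection used above genuinely matches the two domains of the supremum/infimum.
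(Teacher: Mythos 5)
Your proposal is correct and follows essentially the same route as the paper: both identify the coefficient-level representation of $\fP_h\fA_h$ and its inverse via \Cref{remark:OPalgebra}, then use the inner-product identity \eqref{eq:def_inner_product} to equate the Rayleigh quotients, and conclude equality of the $H$-FoV sets (hence of $\mV_H$). Your explicit remark that the synthesis operator $\Lambda_h$ gives a bijection between $\IC^N\setminus\{\boldsymbol{0}\}$ and $X_h\setminus\{\boldsymbol{0}\}$ is a point the paper leaves implicit, but it is the same argument.
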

\begin{proof}Following \eqref{eq:OPalgebra}, $q_h :=\fP_h\fA_h u_h \in X_h$ for any $u_h \in X_h$ has basis expansion $\bq =\bP \bA \bu$. Therefore, there holds by \eqref{eq:def_inner_product} that for any $u_h \in X_h$:  
\be 
(\fP_h \fA_h u_h , u_h)_H = (\bP \bA \bu , \bu)_H\quad \text{and} \quad (u_h,u_h)_H = (\bu, \bu)_H,
\ee
yielding $\mF_H(\fP_h\fA_h)= \mF_H(\bP\bA)$, and thus the expected result for $\fP_h \fA_h$. Similarly, one deduces the same result for the inverse operator since for any $q_h \in X_h$, $(\fP_h\fA_h)^{-1}q_h$ has basis expansion $(\bP \bA)^{-1} \bq$.\end{proof}

\subsection{Linear convergence estimates for GMRES$(m)$ applied to \CA}\label{subsec:LinearCA}

Following \Cref{subsec:iterative}, application of the weighted (resp.~Euclidean) preconditioned GMRES$(m)$, for $1 \leq m \leq N$, to \CA$_{\mu,\nu}$ and initial guess $\bx_0 \neq \bu_\nu$, \rev{produces} the iterates $\bx_k$ (resp.~$\tilde{\bx}_k$) for any step $1\leq k \leq N$ with minimal residual properties:
\be\label{eq:prec_residual}
\begin{split}
\n{\bP_\mu \br_k}_H &: = \n{\bP_\mu \bb_\nu - \bP_\mu \bA_\nu \bx_k }_H = \min_{\bx \in \mK^k  (\bP_\mu \bA_\nu , \br_0)} \n{\bP_\mu \bb_\nu - \bP_\mu \bA_\nu \bx}_H, \\
\n{\bP_\mu \tilde{\br}_k}_2 &: = \n{\bP_\mu \bb_\nu - \bP_\mu \bA_\nu \tilde{\bx}_k }_2 = \min_{\bx \in \mK^k \left(\bP_\mu \bA_\nu , \br_0\right)} \n{\bP_\mu \bb_\nu - \bP_\mu \bA_\nu \bx}_2,
\end{split}
\ee
with $\mK^k$ introduced in \eqref{eq:Krylov} and obvious construction for \CA. By \eqref{eq:prec_residual}, the minimal residuals satisfy
\be\label{eq:minimal}
\n{\bP_\mu \tilde{\br}_k}_2 \leq \n{\bP_\mu \br_k}_2\quad \text{and} \quad  \n{\bP_\mu \br_k}_H \leq \n{\bP_\mu \tilde{\br}_k}_H.
\ee
We set convergence rates for the weighted (resp.~Euclidean) preconditioned GMRES$(m)$:
\be\label{eq:ConvRates}
\Theta^{(m)}_k := \left(\frac{\n{\bP_\mu \br_k}_H}{ \n{\bP_\mu \br_0}_H}\right)^\frac{1}{k} \quad \text{and} \quad  \widetilde{\Theta}_k^{(m)} :=\left( \frac{\n{\bP_\mu \tilde{\br}_k}_2}{ \n{\bP_\mu \br_0}_2}\right)^\frac{1}{k}.
\ee
Finally, we define convergence rates for non-restarted weighted (resp.~Euclidean) preconditioned GMRES $\Theta_k :=\Theta_k^{(N)}$ (resp.~$\widetilde{\Theta}_k:=\widetilde{\Theta}_k^{(N)}$).

The bounds in \Cref{thm:opprecond} and \Cref{thm:strang} are related to the spectral radius, and rely on the continuity and discrete inf-sup constants: they do not supply information on the eigenvalue or FoV distributions, as pointed out in \Cref{fig:numrange}, which are required to derive convergence results for iterative solvers to \CA or \CA$_{\mu,\nu}$. Thus, more specific conditions are required. For instance, for \CA~we enforce the following $(X_h)$-coercivity condition for $\fP_h\fA_h$. 

\begin{assumption}[$(X_h)$-coercivity for \CA]\label{ass:FOV}For problem \CA~with $X:=H$ being a Hilbert space with inner product $(\cdot,\cdot)_H$, we assume that $\fP_h\fA_h$ and its inverse are $(X_h)$-coercive satisfying
\be \label{eq:def_CAL}
\frac{\gamma_\fC \gamma_\fA}{\|\fm\|\|\fn\|}\leq \mV_H(\fP_h\fA_h) \quad \text{and}  \quad \frac{\gamma_\fM \gamma_\fN}{\|\fc\|\|\fa\|}  \leq \mV_H((\fP_h \fA_h)^{-1}).
\ee
\end{assumption}
\begin{remark}\label{rmk:extensionXh}The $(X_h)$-coercivity constants in \Cref{ass:FOV} emerge naturally, as they are related to the BNB constants for both $\fP_h\fA_h$ and its inverse (cf.~proof of \Cref{thm:opprecond}). Alternatively, one can write \Cref{ass:FOV} as:
\be 
0 < \Gamma_0 \leq \mV_H(\fP_h \fA_h) \quad \text{and}\quad 0<  \Gamma_1 \leq \mV_H((\fP_h\fA_h)^{-1}),
\ee 
with $\Gamma_0, \Gamma_1$ constants depending on the discrete inf-sup and continuity constants for the induced operators and eventually for $(\cdot, \cdot)_H$ in \eqref{eq:def_inner_product}.
\end{remark}

We are ready to state the linear convergence result for GMRES$(m)$ for \CA~for the Hilbertian case.
\begin{theorem}[{GMRES$(m)$: Linear convergence estimates for \CA}]
\label{lemma_linear}
Consider \CA~with $X=:H$ Hilbert and $(\cdot,\cdot)_H$ such that \Cref{ass:FOV} holds. Then, GMRES($m$) for $1\leq k,m \leq N$ leads to
\be
\label{eq:elmanCA}
\Theta_k^{(m)}  \leq  \left(1- \frac{1}{\K_\star}\right)^\frac{1}{2} \quad \textup{and} \quad \widetilde{\Theta}_k^{(m)} \leq  \KL\left(1 - \frac{1 }{\K_\star}\right)^\frac{1}{2},
\ee
with $\K_\star$ as defined in \eqref{eq:kS_star} and $\KL$ in \eqref{eq:KL}.
\end{theorem}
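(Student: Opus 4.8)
The plan is to apply the weighted GMRES$(m)$ linear bound from \Cref{lemma:Weighted_GMRES} to the matrix $\bQ := \bP\bA$, and then convert the resulting $H$-FoV quantities into the continuity and inf-sup constants via \Cref{ass:FOV}. First I would verify the hypothesis $0 \neq \mF_H(\bP\bA)$: by \Cref{lemma:coicideFoV}(ii) we have $\mV_H(\bP\bA) = \mV_H(\fP_h\fA_h)$, and \Cref{ass:FOV} gives $\mV_H(\fP_h\fA_h) \geq \gamma_\fC\gamma_\fA/(\|\fm\|\|\fn\|) > 0$, which rules the origin out of the $H$-FoV and makes \Cref{lemma:Weighted_GMRES} applicable. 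Applying that lemma yields, for all $1\leq k,m\leq N$,
\be
\Theta_k^{(m)} = \left(\frac{\n{\bP\br_k}_H}{\n{\bP\br_0}_H}\right)^\frac{1}{k} \leq \left(1 - \mV_H(\bP\bA)\,\mV_H\big((\bP\bA)^{-1}\big)\right)^\frac{1}{2}.
\ee

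Next I would lower-bound the product $\mV_H(\bP\bA)\,\mV_H((\bP\bA)^{-1})$ using the two coincidences from \Cref{lemma:coicideFoV}(ii), namely $\mV_H(\bP\bA)=\mV_H(\fP_h\fA_h)$ and $\mV_H((\bP\bA)^{-1})=\mV_H((\fP_h\fA_h)^{-1})$, together with the two inequalities of \Cref{ass:FOV}. This gives
\be
\mV_H(\bP\bA)\,\mV_H\big((\bP\bA)^{-1}\big) \geq \frac{\gamma_\fC\gamma_\fA}{\|\fm\|\|\fn\|}\cdot\frac{\gamma_\fM\gamma_\fN}{\|\fc\|\|\fa\|} = \frac{1}{\K_\star},
\ee
by the very definition of $\K_\star$ in \eqref{eq:kS_star}. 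Substituting back produces the first bound $\Theta_k^{(m)} \leq (1 - 1/\K_\star)^{1/2}$; I would note in passing that $\K_\star \geq 1$, so the quantity under the square root lies in $[0,1)$ and the bound is genuinely contractive.

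For the Euclidean rate $\widetilde{\Theta}_k^{(m)}$, the idea is to pass from the Euclidean residual to the weighted residual and back, paying a factor of $\KL$ each time through the norm equivalence induced by the synthesis operator. Concretely, the residual $\bP\br_k$ is the basis-coefficient vector of an element of $X_h$, so $\gamma_{\Lambda_h}\n{\bP\br_k}_2 \leq \n{\bP\br_k}_H \leq \|\Lambda_h\|\n{\bP\br_k}_2$ (after identifying $\n{\cdot}_H$ with the $X$-norm via \eqref{eq:def_inner_product}). Combining this with the minimal-residual comparison $\n{\bP\tilde{\br}_k}_2 \leq \n{\bP\br_k}_2$ from \eqref{eq:minimal} and the already-established weighted bound, I would write
\be
\n{\bP\tilde{\br}_k}_2 \leq \n{\bP\br_k}_2 \leq \frac{1}{\gamma_{\Lambda_h}}\n{\bP\br_k}_H \leq \frac{\|\Lambda_h\|}{\gamma_{\Lambda_h}}\rho^k\,\n{\bP\br_0}_2,
\ee
so that taking $k$-th roots and recalling $\KL = \|\Lambda_h\|/\gamma_{\Lambda_h}$ and $\rho=(1-1/\K_\star)^{1/2}$ yields $\widetilde{\Theta}_k^{(m)} \leq \KL(1-1/\K_\star)^{1/2}$, as claimed.

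The routine parts are the algebraic manipulations; the step I expect to require the most care is the Euclidean-to-weighted conversion, since one must be precise about \emph{which} vectors carry the synthesis-operator norm equivalence. The norm equivalence $\gamma_{\Lambda_h}\n{\cdot}_2 \leq \n{\cdot}_H \leq \|\Lambda_h\|\n{\cdot}_2$ applies to coefficient vectors of elements of $X_h$, and here I must apply it to the preconditioned residual $\bP\br_k$, which by the Galerkin product algebra of \Cref{remark:OPalgebra} is indeed the coefficient vector of $\fP_h\fA_h(\bu_\nu - \bx_k) \in X_h$; I would make this identification explicit to avoid conflating residuals living in different spaces. One should also take the $k$-th root consistently against the common denominator $\n{\bP\br_0}_2$ rather than mixing $H$- and $2$-norms of the initial residual, since the two definitions in \eqref{eq:ConvRates} normalize by different initial quantities.
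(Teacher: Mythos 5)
Your proposal is correct and follows essentially the same route as the paper's own proof: apply \Cref{lemma:Weighted_GMRES} to $\bP\bA$, convert the $H$-FoV quantities into $1/\K_\star$ via \Cref{lemma:coicideFoV} and \Cref{ass:FOV}, and then obtain the Euclidean bound through the chain $\n{\bP\tilde{\br}_k}_2 \leq \n{\bP\br_k}_2 \leq \gamma_{\Lambda_h}^{-1}\n{\bP\br_k}_H \leq \KL\,\rho^k\n{\bP\br_0}_2$ using \eqref{eq:minimal} and the synthesis operator. Your added care—explicitly checking $0\notin\mF_H(\bP\bA)$, noting $\K_\star\geq 1$, and identifying $\bP\br_k$ as the coefficient vector of an element of $X_h$ via \Cref{remark:OPalgebra}—are sound refinements of details the paper leaves implicit.
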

\begin{proof}By combining \Cref{ass:FOV}, \Cref{lemma:coicideFoV} and definition of $\K_\star$, there holds that
\be 
\mV_H (\bP\bA) \mV_H((\bP\bA)^{-1}) \geq \frac{1}{\K_\star} 
\ee
and thus
$$1 -\mV_H (\bP\bA) \mV_H((\bP\bA)^{-1}) \leq 1 - \frac{1}{\K_\star}$$
Application of \Cref{lemma:Weighted_GMRES} to the preconditioned system with residuals \eqref{eq:prec_residual} provides the first bound in \eqref{eq:elmanCA}, namely
\be\label{eq:proof_lemma_linear1}
\frac{\n{\bP \br_k}_H}{\n{\bP \br_0}_H} \leq \left(1 - \frac{1}{\K_\star}  \right)^\frac{k}{2}, \quad 1 \leq k \leq N.
\ee
Next, we follow the steps in \cite[Section 4]{sarkis2007optimal} to arrive at the second bound in \eqref{eq:elmanCA}. First, the minimal residual property \eqref{eq:minimal} yields
\be\label{eq:proof_lemma_linear2}  
\n{\bP \tilde{\br}_k}_2\leq \n{\bP \br_k}_2. 
\ee
By virtue of the synthesis operator in \eqref{eq:synthesis}, one has
\be\label{eq:proof_lemma_linear3} 
\n{\bP \br_k}_2 \leq  \frac{1}{\gamma_{\Lambda_h}} \n{\bP \br_k}_H\quad \text{and} \quad \n{\bP \br_0}_H \leq \|\Lambda_h\| \n{\bP \br_0}_2.
\ee
Therefore, \eqref{eq:proof_lemma_linear1} combined with \eqref{eq:proof_lemma_linear2} and \eqref{eq:proof_lemma_linear3} lead to the final result, as
\begin{align*}
\n{\bP \tilde{\br}_k}_2\leq \n{\bP \br_k}_2 \leq  \frac{1}{\gamma_{\Lambda_h}}\n{\bP \br_k}_H\leq  \frac{1}{\gamma_{\Lambda_h}}\left(1- \frac{1}{\K_\star}\right)^\frac{k}{2} \n{\bP \br_0}_H 
\leq  \KL\left(1- \frac{1}{\K_\star}\right)^\frac{k}{2} \n{\bP \br_0}_2,
\end{align*}
as stated.
\end{proof}
\begin{remark}
This result provides extensive convergence bounds for GMRES$(m)$. It will guarantee $h$-independent convergence for weighted GMRES$(m)$ for \CA~in the Hilbert setting (cf.~\Cref{remark:Asymptotics}). Also, the synthesis operator enters as an offset factor in $\widetilde{\Theta}_k^{(m)}= \KL \rho$ with $\rho:= (1- 1/ \K_\star)^{1/2} <1$. One should observe that $\widetilde{\Theta}_k^{(\text{m})}$ could be larger than 1 for $\KL >1$, an impractical bound for Euclidean GMRES$(m)$. The latter supports theoretically the use of weighted GMRES$(m)$ as a solver \cite{feischl2017optimal}.
\end{remark}
To illustrate the application of the above results, we provide a case of interest where \Cref{ass:FOV} is satisfied. Therein, notice the extra $\fK_\fA$-term in \eqref{eq:lemma_precinduced} below, justifying \Cref{rmk:extensionXh}. 
\begin{corollary}[Preconditioner-induced norm {\cite{feischl2017optimal,starke1997field,kirby2010functional}}]\label{case:case}Consider \CA~for OP-BG for Hilbert spaces $X=:H$ and $V$, $\fA$ being $\langle X_h \rangle$-coercive, and $\fC$ being $\langle V_h \rangle$-elliptic, with $\gamma_\fA := \alpha_\fA$ and $\gamma_\fC:=\alpha_\fC$. Then, $\fP^{-1}$ is Hermitian and yields an inner product on $X_h$, denoted by $(\cdot,\cdot)_{P^{-1}}$, and
\be \label{eq:lemma_precinduced}
 \frac{\gamma_\fC\gamma_\fA }{\|\fm\|^2}  \leq  \mV_{P^{-1}}(\fP_h \fA_h) \quad\text{and}\quad \frac{\gamma_\fM^2}{\|\fc\|\|\fa\|}\K_\fA \leq \mV_{P^{-1}}((\fP_h \fA_h)^{-1}).
\ee
\end{corollary}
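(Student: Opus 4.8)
My plan is to specialise the abstract $H$-inner product of \Cref{ass:FOV} to the one induced by $\fP^{-1}$, to reduce both $\mV$-quantities to ratios of the scalar forms $\fa$ and $\fc$ by means of the Galerkin product algebra of \Cref{remark:OPalgebra}, and then to estimate those ratios using only the coercivity/continuity of $\fa,\fc$ and the mapping properties of $\fM,\fN$. First I would settle the inner-product claim. In the BG case $\fN=\fM^{*}$, so $\bN=\bM^{H}$ and $\bP^{-1}=\bN\bC^{-1}\bM=\bM^{H}\bC^{-1}\bM$. Since $\fC$ is $\langle V_h\rangle$-elliptic it is self-adjoint, whence $\bC$ (and $\bC^{-1}$) is Hermitian positive definite, and $\bM$ is invertible by its inf-sup hypothesis; therefore $(\bu,\bv)_{P^{-1}}:=(\bP^{-1}\bu,\bv)_2=(\bC^{-1}\bM\bu,\bM\bv)_2$ is Hermitian and positive definite, i.e.\ an inner product on $X_h$. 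With this choice the discrete and matrix field-of-values coincide by the product-algebra identity (\Cref{remark:OPalgebra}, \Cref{lemma:coicideFoV}), now for the $\fP^{-1}$-inner product, so I may work with $\bP\bA$.

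Next comes the reduction. Using that $\bP^{-1}$ is Hermitian and $\bP^{-1}\bP=\bI$, a direct computation gives, for $u_h\in X_h$ with coefficients $\bu$,
\[
(\bP\bA\bu,\bu)_{P^{-1}}=(\bA\bu,\bu)_2=\fa(u_h,u_h),\qquad (\bu,\bu)_{P^{-1}}=\fc(z_h,z_h),
\]
where $z_h\in V_h$ solves $\fC_h z_h=\fM_h u_h$, i.e.\ $\fc(z_h,w_h)=\fm(u_h,w_h)$ for all $w_h\in V_h$. Writing $\bq=\bP\bA\bu$ and repeating the computation for the inverse,
\[
((\bP\bA)^{-1}\bq,\bq)_{P^{-1}}=(\bA\bu,\bu)_2=\fa(u_h,u_h),\qquad (\bq,\bq)_{P^{-1}}=\fc(t_h,t_h),
\]
with $t_h\in V_h$ determined by $\fN_h t_h=\fA_h u_h$. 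Hence $\mV_{P^{-1}}(\fP_h\fA_h)$ and $\mV_{P^{-1}}((\fP_h\fA_h)^{-1})$ are the infima over $u_h\in X_h$ of $|\fa(u_h,u_h)|/\fc(z_h,z_h)$ and $|\fa(u_h,u_h)|/\fc(t_h,t_h)$, respectively.

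For the first bound I would estimate the numerator from below by $\langle X_h\rangle$-coercivity, $|\fa(u_h,u_h)|\ge\gamma_\fA\n{u_h}_X^2$, and the denominator from above by testing the defining relation of $z_h$ against $z_h$ itself: $\fc(z_h,z_h)=\fm(u_h,z_h)\le\|\fm\|\,\n{u_h}_X\n{z_h}_V$, while the inf-sup of $\fc$ yields $\n{z_h}_V\le\|\fm\|\gamma_\fC^{-1}\n{u_h}_X$; together these give $\fc(z_h,z_h)\le\|\fm\|^{2}\gamma_\fC^{-1}\n{u_h}_X^2$, and dividing produces exactly the announced $\gamma_\fC\gamma_\fA/\|\fm\|^{2}$. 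This step is clean precisely because the auxiliary function $z_h$ is pinned down through $\fC$, the same form that appears in the denominator.

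The inverse bound is the main obstacle, and it is where the $\K_\fA$-term enters. Here the clean test identity is unavailable: $t_h$ is determined through $\fN$, whereas the denominator is the $\fc$-form, so one is forced to spend the continuity of $\fc$, $\fc(t_h,t_h)\le\|\fc\|\n{t_h}_V^2$, together with $\n{t_h}_V\le\gamma_\fM^{-1}\n{\fA_h u_h}_{Y_h'}\le\|\fa\|\gamma_\fM^{-1}\n{u_h}_X$ (using $\gamma_\fN=\gamma_\fM$ in the BG case). Combined with $|\fa(u_h,u_h)|\ge\gamma_\fA\n{u_h}_X^2$, this route naturally delivers $\gamma_\fA\gamma_\fM^{2}/(\|\fc\|\|\fa\|^{2})=(\gamma_\fM^{2}/(\|\fc\|\|\fa\|))\,\K_\fA^{-1}$, so the non-normality factor $\K_\fA=\|\fa\|/\gamma_\fA$ surfaces as $\K_\fA^{-1}$ rather than $\K_\fA$. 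Reconciling this with the statement—equivalently, deciding whether the convergence-relevant product $\mV_{P^{-1}}(\fP_h\fA_h)\,\mV_{P^{-1}}((\fP_h\fA_h)^{-1})$ equals $\K_\fA/\K_\star$ or $1/(\K_\star\K_\fA)$—is the delicate point I would scrutinise most, as it turns entirely on this asymmetric estimate of $\fc(t_h,t_h)$; this is precisely the dependence on more than the bare inf-sup/continuity constants that \Cref{rmk:extensionXh} anticipates.
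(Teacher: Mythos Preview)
Your approach is essentially the paper's: both show that $\bP^{-1}=\bM^{H}\bC^{-1}\bM$ is Hermitian positive definite, reduce the two $\mV_{P^{-1}}$-quantities to ratios of $(\bA\bu,\bu)_2$ against $(\bP^{\pm1}\cdot,\cdot)_2$, and estimate these via the coercivity and continuity constants---the only cosmetic difference is that you parametrise by $u_h\in X_h$ and carry out the bounds by hand, whereas the paper parametrises the inverse term by $\bw=\bP^{-1}\bv$ and quotes Kirby's and Steinbach's inequalities for the resulting dual-norm ratios. Your suspicion about the second inequality is correct: the paper's own derivation arrives at $\gamma_\fA\gamma_\fM^{2}/(\|\fa\|^{2}\|\fc\|)=(\gamma_\fM^{2}/(\|\fc\|\|\fa\|))\,\K_\fA^{-1}$, exactly your bound, and the subsequent \Cref{colo:case} with its factor $1/(\K_\star\K_\fA)$ confirms this reading---the $\K_\fA$ appearing in \eqref{eq:lemma_precinduced} should be $\K_\fA^{-1}$.
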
\begin{proof}First, notice that since $\fC$ is $\langle V_h\rangle$-elliptic, $\bC$ and $\bC^{-1}$ are Hermitian positive definite. Therefore, we deduce that $\bP= \bM^{-1}\bC \bM^{-H} = \bP^H$ and $\bP^{-1} = \bP^{-H}$ are Hermitian positive definite. For any $\bu,\bv\in \IC^N\setminus \{\boldsymbol{0}\}$ and $\bw := \bP^{-1}\bv$, one has
$$
\mV_{P^{-1}} ( \fP_h \fA_h) =  \inf_{\bu \in \IC^N \setminus \{\boldsymbol{0}\}}\frac{|(\bP \bA \bu , \bu )_{P^{-1}}|}{(\bu,\bu)_{P^{-1}}} =   \inf_{\bu \in \IC^N \setminus \{\boldsymbol{0}\}}\frac{| (\bA \bu, \bu)_2|}{|(\bP^{-1} \bu,\bu)_2|}
$$
and 
$$
\quad\mV_{P^{-1}} ( (\fP_h \fA_h)^{-1}) =   \inf_{\bv \in \IC^N \setminus \{\boldsymbol{0}\}}\frac{|(\bA^{-1}\bP^{-1} \bv , \bv )_{P^{-1}}|}{(\bv,\bv)_{P^{-1}}} =  \inf_{\bw \in \IC^N \setminus \{\boldsymbol{0}\}}\frac{| (\bA^{-1} \bw, \bw)_2|}{|(\bP \bw,\bw)_2|}.
$$
Next, using Equations 2.61 and 2.62 in Kirby \cite{kirby2010functional}, we deduce that:
\be 
\gamma_\fA \|u_h\|_X^2 \leq |(\bA \bu,\bu)_2| \leq \|\fa\|\|u_h\|_X^2\quad \text{and} \quad \frac{\gamma_\fA}{\|\fa\|} \|w_h\|_{X_{{h}}'}^2 \leq |(\bA^{-1} \bw,\bw)_2| \leq \|\fa\|\|w_h\|_{X_{{h}}'}^2,
\ee 
while for the preconditioner $\bP$ one has by \cite[Section 13.2]{steinbach2007numerical} that
\be 
\frac{\gamma_\fC}{\|\fm\|^2} \|w_h\|_{X_h'}^2 \leq |(\bP \bw,\bw)_2| \leq \frac{\|\fc\|}{\gamma_\fM^2}\|w_h\|_{X_{{h}}'}^2 \quad \text{and} \quad \frac{\gamma_\fM^2}{\|\fc\|} \|u_h\|_{X}^2 \leq |(\bP^{-1} \bu,\bu)_2| \leq \frac{\|\fm\|^2}{\gamma_\fC }\|u_h\|_{X}^2.
\ee 
Therefore, 
\be 
\frac{\gamma_\fA \gamma_\fC}{\|\fm\|^2} \leq \frac{| (\bA \bu, \bu)_2|}{|(\bP^{-1} \bu,\bu)_2|} \quad \text{and} \quad \frac{\gamma_\fA \gamma_\fM^2}{\|\fa\|^2 \|\fc\|}  \leq \frac{| (\bA^{-1} \bw, \bw)_2|}{|(\bP \bw,\bw)_2|},
\ee 
finalizing the proof.
\end{proof}
\begin{corollary}\label{colo:case}Consider \CA~for OP-BG for Hilbert spaces $X=:H$ and $V$, $\fA$ being $\langle X_h \rangle$-coercive, and $\fC$ being $\langle V_h \rangle$-elliptic. Then, GMRES$(m)$ for $1\leq k , m \leq N$ leads to
\be 
\Theta_k^{(m)} \leq \left(1 - \frac{1}{\K_\star \K_\fA}\right)\quad \text{and} \quad \widetilde{\Theta}_k^{(m)} \leq  \KL\left(1 - \frac{1 }{\K_\star \K_\fA}\right)^\frac{1}{2},
\ee
with $\K_\star$ as defined in \eqref{eq:kS_star}, $\KL$ in \eqref{eq:KL} and $\K_\fA$ in \eqref{eq:discrete_condition_number}.
\end{corollary}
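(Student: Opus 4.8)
The plan is to run the argument of \Cref{lemma_linear} almost verbatim, except that the GMRES residuals are weighted with the preconditioner-induced inner product $(\cdot,\cdot)_{P^{-1}}$ of \Cref{case:case} rather than with the ambient $(\cdot,\cdot)_H$. Ellipticity of $\fC$ is precisely what makes this weight available, so it is the natural choice here, and it is what produces the extra factor $\K_\fA$ relative to \Cref{lemma_linear}.

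First I would record from \Cref{case:case} that $\langle V_h\rangle$-ellipticity of $\fC$ forces $\bC,\bC^{-1}$ to be Hermitian positive definite, hence $\bP=\bM^{-1}\bC\bM^{-H}$ and $\bP^{-1}$ are Hermitian positive definite and $(\cdot,\cdot)_{P^{-1}}$ is a genuine inner product on $X_h$; \Cref{case:case} then delivers the two coercivity lower bounds for $\fP_h\fA_h$ and its inverse in this product. Using the OP-BG reductions $\fN=\fM^{*}$, whence $\|\fn\|=\|\fm\|$ and $\gamma_\fN=\gamma_\fM$, the constant $\K_\star$ collapses to $\|\fm\|^{2}\|\fc\|\|\fa\|/(\gamma_\fM^{2}\gamma_\fC\gamma_\fA)$, and I would multiply the two coercivity constants to obtain
\be
\mV_{P^{-1}}(\fP_h\fA_h)\,\mV_{P^{-1}}((\fP_h\fA_h)^{-1}) \geq \frac{1}{\K_\star\K_\fA}.
\ee

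Next, invoking the $P^{-1}$-analogue of \Cref{lemma:coicideFoV} transfers these discrete quantities to the matrix ones $\mV_{P^{-1}}(\bP\bA)$ and $\mV_{P^{-1}}((\bP\bA)^{-1})$, so that \Cref{lemma:Weighted_GMRES} applied with $H$ replaced by $P^{-1}$ to the residuals \eqref{eq:prec_residual} at $\mu=\nu=0$ gives $\Theta_k^{(m)}\leq(1-1/(\K_\star\K_\fA))^{1/2}$, i.e.\ the first claimed bound (carrying the same square-root exponent as in \Cref{lemma_linear}). For $\widetilde{\Theta}_k^{(m)}$ I would finish with the synthesis-operator chain from the end of the proof of \Cref{lemma_linear}: use minimality to get $\n{\bP\tilde{\br}_k}_2\leq\n{\bP\br_k}_2$, then convert between $\n{\cdot}_2$ and $\n{\cdot}_{P^{-1}}$ via $\Lambda_h$ and the norm-equivalence constants of \Cref{case:case}, collecting the offset factor $\KL$.

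The main obstacle is the norm bookkeeping, not a new estimate: the rates of \eqref{eq:ConvRates} are defined through $\n{\cdot}_H$ and $\n{\cdot}_2$, whereas coercivity is clean only in $\n{\cdot}_{P^{-1}}$, so one must interchange the three norms $\n{\cdot}_{P^{-1}}$, $\n{\cdot}_H$, $\n{\cdot}_2$ with the right constants and check that the synthesis step of \Cref{lemma_linear} truly transfers to the $P^{-1}$ weight. In particular I would verify the product computation carefully, since for it to equal $1/(\K_\star\K_\fA)$ the inverse estimate in \eqref{eq:lemma_precinduced} must contribute a factor $1/\K_\fA$ (consistent with its derivation $\gamma_\fA\gamma_\fM^{2}/(\|\fa\|^{2}\|\fc\|)$), and I would confirm that the Euclidean offset is exactly $\KL$ rather than $\KL$ times a leftover norm-equivalence constant.
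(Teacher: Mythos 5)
Your proposal is correct and is essentially the paper's own (implicit) argument: the paper states \Cref{colo:case} without proof, as the immediate combination of \Cref{case:case} with \Cref{lemma_linear} run in the preconditioner-induced weight $(\cdot,\cdot)_{P^{-1}}$, and your product computation $\mV_{P^{-1}}(\fP_h\fA_h)\,\mV_{P^{-1}}((\fP_h\fA_h)^{-1})\geq 1/(\K_\star\K_\fA)$ --- taking the $1/\K_\fA$ factor from the \emph{proof} of \Cref{case:case}, i.e.~$\gamma_\fA\gamma_\fM^2/(\|\fa\|^2\|\fc\|)$, rather than the misprinted $\K_\fA$ in the statement of \eqref{eq:lemma_precinduced} --- together with the OP-BG identifications $\|\fn\|=\|\fm\|$, $\gamma_\fN=\gamma_\fM$, is exactly the intended step. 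The two caveats you flag are defects of the paper's statements rather than gaps in your argument: the first bound of \Cref{colo:case} should indeed carry the exponent $\frac{1}{2}$ (as in \Cref{lemma_linear}, and as your derivation gives; the Elman-type bound cannot produce the stronger exponent-free form), and the Euclidean offset obtained by your synthesis chain is really $\KL$ times the $\|\cdot\|_{P^{-1}}$-vs-$\|\cdot\|_X$ equivalence constant $\bigl(\|\fc\|\|\fm\|^2/(\gamma_\fC\gamma_\fM^2)\bigr)^{1/2}$ coming from \Cref{case:case}, a factor the paper silently absorbs into $\KL$.
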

\begin{remark}
The $\K_\fA$-term in \Cref{case:case} and \Cref{colo:case} is removed if $\fA$ is $\langle X_h \rangle$-elliptic, since $\fA^{-1}$ is $\langle X_h'\rangle$-elliptic with constant $1/\|\fa\|$ \cite[Section 13.2]{steinbach2007numerical}.
\end{remark}

\subsection{Linear convergence estimates for GMRES$(m)$ applied to \CA$_{\mu,\nu}$}

As in \Cref{subsec:LinearCA}, we give counterparts to \Cref{ass:FOV} and \Cref{lemma_linear} for the bi-parametric preconditioned problem \CA$_{\mu,\nu}$. 
\begin{assumption}[$(X_h)$-coercivity for \CA$_{\mu,\nu}$]\label{ass:FOVmn}For \CA$_{\mu,\nu}$~with $X:=H$ being a Hilbert space with inner product $(\cdot,\cdot)_H$, assume that there holds that
\be \label{eq:def_CALmn}
\frac{\gamma_{\fC_\mu} \gamma_{\fA_\nu}}{\|\fm\|\|\fn\|}\leq \mV_H(\fP_{h,\mu} \fA_{h,\nu}) \quad \text{and}  \quad \frac{\gamma_\fM \gamma_\fN}{\|\fc_\mu\|\|\fa_\nu\|} \leq \mV_H((\fP_{h,\mu} \fA_{h,\nu})^{-1}).
\ee
\end{assumption}

\Cref{rmk:extensionXh} remains valid for \Cref{ass:FOVmn}. With this, we can extend \Cref{lemma_linear} to \CA$_{\mu,\nu}$.

\begin{theorem}[GMRES$(m)$: Linear convergence estimates for \CA$_{\mu,\nu}$]
\label{lemma_linear_strang}
Consider \CA$_{\mu,\nu}$~along with \Cref{ass:FOVmn}. Then, the residuals for GMRES($m$) for $1\leq k,m \leq N$ are bounded as
\be
\label{eq:linear_CAmn}
\Theta_k^{(m)}  \leq  \left(1- \frac{1}{\K_{\star,\mu,\nu} } \right)^\frac{1}{2} \quad \textup{and} \quad \widetilde{\Theta}_k^{(m)} \leq  \KL \left(1 - \frac{1 }{\K_{\star,\mu,\nu}}\right)^\frac{1}{2},
\ee
with $\K_{\star,\mu,\nu}$ and $\KL$ defined in \eqref{eq:kS_mu_nu} and \eqref{eq:KL}, respectively. 
\end{theorem}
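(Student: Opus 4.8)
The plan is to follow the proof of \Cref{lemma_linear} almost verbatim, the only genuinely new ingredient being the passage from the unperturbed product bound $\mV_H(\bP\bA)\,\mV_H((\bP\bA)^{-1})\geq 1/\K_\star$ to its bi-parametric analogue. First I would observe that the Galerkin product algebra of \Cref{remark:OPalgebra} is insensitive to the perturbations: the discrete operator $\fP_{h,\mu}\fA_{h,\nu}$ acting on $u_h\in X_h$ still has basis expansion $\bP_\mu\bA_\nu\bu$, so the argument of \Cref{lemma:coicideFoV} carries over unchanged and yields $\mV_H(\fP_{h,\mu}\fA_{h,\nu})=\mV_H(\bP_\mu\bA_\nu)$ together with the corresponding identity for the inverse.

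The key step is then the product estimate. Multiplying the two inequalities of \Cref{ass:FOVmn} gives
\[
\mV_H(\bP_\mu\bA_\nu)\,\mV_H\big((\bP_\mu\bA_\nu)^{-1}\big)\;\geq\;\frac{\gamma_{\fC_\mu}\gamma_{\fA_\nu}\,\gamma_\fM\gamma_\fN}{\|\fm\|\|\fn\|\,\|\fc_\mu\|\|\fa_\nu\|}.
\]
Now I would apply \Cref{prop:perturbed} to $\fc_\mu$ and $\fa_\nu$, using $\gamma_{\fC_\mu}\geq\gamma_\fC(1-\mu)$, $\gamma_{\fA_\nu}\geq\gamma_\fA(1-\nu)$, $\|\fc_\mu\|\leq\|\fc\|(1+\mu)$ and $\|\fa_\nu\|\leq\|\fa\|(1+\nu)$, so that the right-hand side is bounded below by
\[
\frac{\gamma_\fM\gamma_\fN\gamma_\fC\gamma_\fA}{\|\fm\|\|\fn\|\|\fc\|\|\fa\|}\,\frac{(1-\mu)(1-\nu)}{(1+\mu)(1+\nu)}=\frac{1}{\K_\star}\,\frac{(1-\mu)(1-\nu)}{(1+\mu)(1+\nu)}=\frac{1}{\K_{\star,\mu,\nu}},
\]
the last equality being exactly the definition \eqref{eq:kS_mu_nu} of $\K_{\star,\mu,\nu}$. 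Hence $1-\mV_H(\bP_\mu\bA_\nu)\,\mV_H((\bP_\mu\bA_\nu)^{-1})\leq 1-1/\K_{\star,\mu,\nu}$.

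Finally, feeding this one-step contraction factor into \Cref{lemma:Weighted_GMRES} applied to the preconditioned residuals \eqref{eq:prec_residual} yields the weighted bound $\Theta_k^{(m)}\leq(1-1/\K_{\star,\mu,\nu})^{1/2}$, and the Euclidean bound follows by the same bridging as in \Cref{lemma_linear}: the minimal-residual property \eqref{eq:minimal} gives $\n{\bP_\mu\tilde{\br}_k}_2\leq\n{\bP_\mu\br_k}_2$, and the synthesis-operator estimates $\n{\bP_\mu\br_k}_2\leq\gamma_{\Lambda_h}^{-1}\n{\bP_\mu\br_k}_H$ and $\n{\bP_\mu\br_0}_H\leq\|\Lambda_h\|\,\n{\bP_\mu\br_0}_2$ introduce the offset factor $\KL$. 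I do not expect any serious obstacle here: the whole difficulty is concentrated in the bookkeeping of the perturbation constants, i.e.\ verifying that the factor $(1-\mu)(1-\nu)/((1+\mu)(1+\nu))$ produced by \Cref{prop:perturbed} collapses precisely onto the ratio $\K_\star/\K_{\star,\mu,\nu}$; one should also double-check that \Cref{ass:FOVmn} is stated with the \emph{perturbed} inf-sup and continuity constants (as it is) so that no cross-terms in $\mu$ and $\nu$ creep into the final bound.
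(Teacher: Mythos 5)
Your proposal is correct and follows essentially the same route as the paper: the paper's proof is the one-line statement that the result ``follows by direct application of \Cref{lemma_linear} to \CA$_{\mu,\nu}$'', and your argument is precisely the unpacking of that application---combining \Cref{ass:FOVmn} with \Cref{prop:perturbed} so that the perturbation factors $(1-\mu)(1-\nu)/((1+\mu)(1+\nu))$ turn $\K_\star$ into $\K_{\star,\mu,\nu}$, then invoking \Cref{lemma:Weighted_GMRES} and the synthesis-operator bridge for the Euclidean bound. No gaps; your bookkeeping of the perturbed constants matches the definition \eqref{eq:kS_mu_nu} exactly.
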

\begin{proof}The result follows by direct application of \Cref{lemma_linear} to \CA$_{\mu,\nu}$.
\end{proof}
\begin{remark}
The above result gives a controlled convergence rate for GMRES$(m)$ with respect to bi-parametric $(\mu,\nu)$-perturbations. As in the discussion ensuing \Cref{thm:strang} and in order to illustrate its practical implications, assume that the best approximation error in \Cref{lemma:strang} converges at a rate $\mO(h^r)$, $r>0$, and $\nu = \mO(h^r)$. Therefore, provided that $\mu=\mO(1)$ guarantees a bounded $\K_{\star,\mu,\nu}$, the bounds in \eqref{eq:linear_CAmn} ensure linear convergence for the weighted GMRES$(m)$ (resp.~Euclidean GMRES$(m)$, for $\KL < 1$).
\end{remark}

\subsection{Compact and Carleman class operators }\label{subsec:Carleman}

So far, we have focused on the linear convergence rates for GMRES($m$). Yet, it is know that in many situations the bound in \Cref{lemma:Weighted_GMRES} ``\emph{may significantly overestimate the GMRES residual norms}" \cite{liesen2012field}. To better understand this, we aim to improve bounds for the case of second-kind Fredholm operators, which are known to display super-linear convergence results for GMRES, i.e.~the radius of convergence tends to zero as $k\to \infty$. To this end, we introduce the concept of Carleman class operators.

Again, assuming $H$ to be a separable Hilbert space, we introduce $\mC(H)\equiv \mC(H;H)$ the space of compact operators on $H$. Given $\fT\in \mL(H;H)$, we denote the ordered singular values of $\fT$ as $\sigma_j(\fT) := \{\inf {\|\fT-\fT_i\|}_H : \fT_i : H\to H , \ \textup{rank} \fT_i < j\}$. For any $k\geq 1$, the $k$th partial arithmetic mean for the singular values reads
\be\label{eq:partial_singular_values}
\overline{\sigma}_k (\fK): = \frac{1}{k} \sum_{j=1}^k \sigma_j (\fK).
\ee
For $p>0$, a compact operator $\fK \in\mC(H)$ is said to belong to the \emph{Carleman class} $\mC^p(H)$ \cite[Section XI.9]{dunford1963linear} if it holds that
\be\label{eq:Carleman} 
\iii{\fK}_p = \|\sigma(\fK)\|_{p} := \left(\sum_{i=1}^\infty {\sigma_i(\fK)}^p \right)^{1/p}<\infty. 
\ee
Next, we say that $\fQ$ is a $p$\emph{-class Fredholm operator of the second-kind}, $\fQ \in \mF\mC^p(H)$ if and only if $\fQ - \fI \in \mC(H)$ for $p=0$ \rev{(resp.~$\fQ - \fI \in \mC^p(H)$ for $p>0$)}.
Consequently, for $H =: X$ a separable Hilbert space, $p\geq 0$ and $\nu, \mu\in [0,1)$, we define the following problems:
\be \label{eq:Ap}
\A^p : \quad \A\quad \text{for PGE (i.e.~$\fA : H\to H$) with } \fA \in \mF\mC^p(H) \quad \text{and}\quad \fN: = \fI,
\ee
and
\be\label{eq:CAmnp} 
\CA_{\mu,\nu}^p : \quad \CA_{\mu,\nu}\quad \text{with}\quad  \fC_\mu \fN^{-1} \fA_\nu \in \mF\mC^p(H),
\quad \text{and} \quad \fM: = \fI, 
\ee
whose diagram representation is
$$
\textup{\CA}_{\mu,\nu}^p: \quad\begin{tikzcd}
H \arrow{r}{\fA_\nu}  & Y' \arrow{d}{\fN^{-1}} \\
H \arrow{u}{\fI^{-1}} & V\arrow{l}{\fC_\mu} 
\end{tikzcd}.
$$
Finally, for \A$^p$ and \CA$^p_{\mu,\nu}$, the corresponding compact terms $\fK:=\fA -\fI $ and $\fK_{\mu,\nu}: = \fC_\mu \fN^{-1}\fA_\nu - \fI$ have discrete counterparts $\fK_h := \fA_h-\fI_h$ and $\fK_{h,\mu,\nu}:= \fC_h \fN_h^{-1}\fA_h-\fI_h$ with Galerkin matrices defined as
\be\label{eq:Kmatrix}
\bK:= \bA - \bN \quad \text{and}\quad \bK_{\mu,\nu} := \bC_\mu \bN^{-1}\bA_\nu-\bM,
\ee 
respectively. In the sequel, we introduce ordered (matrix) singular values with respect to the $H$-norm \cite[Proposition 4.2]{axelssonsuperlinear}:
\be 
\sigma_j^H(\bQ) := \lambda_j (\bQ^\star \bQ)^{1/2} = \sigma_j( \bH^{1/2} \bQ \bH^{-1/2}),
\ee 
for any $\bQ \in \IC^{N\times N}$ and $\bQ^\star= \bH^{-1} \bQ^H \bH$ its $H$-adjoint.
\subsection{Super-linear convergence estimates for GMRES applied to \A$^p$}\label{subsec:second-kind} 
We recall the classic super-linear convergence result for weighted GMRES on a (continuous) Hilbert setting level (cf.~\cite{moret1997note} and \cite[Theorem 3.1]{axelssonsuperlinear}).

\begin{proposition}[Weighted GMRES: Classic super-linear convergence estimate {\cite[Theorem 3.1]{axelssonsuperlinear}}]\label{prop:classic_superlinear}Let $H$ be a Hilbert space. Set $p \geq 0$ and consider the application of weighted GMRES on $\fQ x = {f}$, for a bounded and invertible operator $\fQ \in \mF \mC^p (H)$ with $f \in H$. Introduce GMRES iterates $x_0 \neq x$, and $x_k$, along with $r_k : = \fQ x_k - f$, for any $k \geq 1$. Then, the residuals satisfy
$$
\left(\frac{\n{r_k}_{H}}{\n{r_0}_{H}}\right)^\frac{1}{k} \leq\n{\fQ^{-1}}_{H} \overline{\sigma}_k(\fK), 
$$
wherein \rev{$\fK := \fQ - \fI\in \mC(H)$ for $p=0$ (resp.~ $\mC^p(H)$ for $p>0$)} and $\overline{\sigma}_k(\fK)$ defined in \rev{\eqref{eq:partial_singular_values}}.
\end{proposition}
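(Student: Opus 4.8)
The plan is to reduce the statement to a polynomial approximation problem through the minimal-residual characterization of weighted GMRES and then to build an explicit polynomial calibrated to the singular values of the compact part $\fK := \fQ - \fI$. Since this is a recalled classical fact, one may simply invoke \cite[Theorem 3.1]{axelssonsuperlinear} or \cite{moret1997note}; what follows is the self-contained argument I would give. Writing the residual via its optimality property, $\n{r_k}_H = \min\{\n{p(\fQ)r_0}_H : \deg p \le k,\ p(0)=1\} \le \n{r_0}_H \min_{p}\n{p(\fQ)}_H$, the minimum being over polynomials $p$ of degree at most $k$ normalized by $p(0)=1$. Because $\fQ = \fI + \fK$ with $\fK \in \mC^p(H)\subset \mC(H)$ compact, the whole task is the construction of one such $p$ for which $p(\fQ)$ is small.

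Second, I would build the competitor polynomial from the singular value decomposition of $\fK$. Let $\sigma_1 \ge \sigma_2 \ge \dots$ be the singular values of $\fK$ with singular systems $\{u_j\},\{v_j\}$, and let $\fK_k$ be its best rank-$k$ truncation, so that Schmidt's (Eckart--Young) theorem gives $\n{\fK - \fK_k}_H = \sigma_{k+1}$ and the Ky Fan norms control the partial sums $\sum_{j\le k}\sigma_j(\fK)$. Choosing $p$ so that its $k$ roots deflate the action of $\fQ$ on the dominant singular subspace while the complementary part stays controlled by $\fQ^{-1}$ yields, after the careful normalization that removes spurious constants, a bound of the form $\n{p(\fQ)}_H \le \n{\fQ^{-1}}_H^{k}\prod_{j=1}^{k}\sigma_j(\fK)$; the factor $\n{\fQ^{-1}}_H$ appears once per deflated direction because each linear factor $(1 - z/\mu)$ of $p$ is normalized by the corresponding value of $\fQ$, which is bounded below by $\n{\fQ^{-1}}_H^{-1}$.

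Finally, I would pass from the product to the stated arithmetic-mean form by AM--GM, $\prod_{j=1}^{k}\sigma_j(\fK) \le \big(\tfrac1k\sum_{j=1}^{k}\sigma_j(\fK)\big)^{k} = \overline{\sigma}_k(\fK)^{k}$, and take $k$-th roots, giving $(\n{r_k}_H/\n{r_0}_H)^{1/k} \le \n{\fQ^{-1}}_H\,\overline{\sigma}_k(\fK)$; super-linear convergence is then immediate since $\overline{\sigma}_k(\fK)\to 0$ as $k\to\infty$, being the Ces\`aro mean of the null sequence $\sigma_j(\fK)$. The principal obstacle is the non-normality of $\fQ$: one cannot bound $\n{p(\fQ)}_H$ by the values of $p$ on the spectrum, so the argument must run through singular values and finite-rank approximations rather than eigenvalues. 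If one prefers to reason with the eigenvalues $\lambda_j(\fK)$, Weyl's multiplicative inequality $\prod_{j\le k}|\lambda_j(\fK)| \le \prod_{j\le k}\sigma_j(\fK)$ bridges the two and recovers the same singular-value bound.
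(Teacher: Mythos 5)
The paper itself offers no proof of \Cref{prop:classic_superlinear}: it is recalled as a classical result, and the ``proof'' consists of the citations \cite[Theorem 3.1]{axelssonsuperlinear} and \cite{moret1997note}. Your endgame does match the endgame of that classical derivation --- obtain $\n{r_k}_H/\n{r_0}_H \leq \n{\fQ^{-1}}_H^k \prod_{j=1}^k \sigma_j(\fK)$, then apply the arithmetic--geometric mean inequality and take $k$-th roots --- so the issue is not there.

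The genuine gap is your central step. You assert, without construction, that there is a single polynomial $p$ with $p(0)=1$, $\deg p \leq k$, such that $\n{p(\fQ)}_H \leq \n{\fQ^{-1}}_H^k \prod_{j=1}^k \sigma_j(\fK)$. The guiding picture --- the $k$ roots of $p$ ``deflate the action of $\fQ$ on the dominant singular subspace'' --- does not work: a polynomial in $\fQ$ can only deflate invariant subspaces of $\fQ$ (spectral data), whereas the singular subspaces of $\fK$ are not $\fQ$-invariant, so $p(\fQ)$ has no reason to be small on them; likewise ``the corresponding value of $\fQ$'' on a singular direction is not a scalar one can normalize a linear factor by. There is also a structural obstruction to the whole route: your first reduction, $\n{r_k}_H \leq \n{r_0}_H \min_p \n{p(\fQ)}_H$, discards the dependence on $r_0$ and replaces worst-case GMRES by \emph{ideal} GMRES. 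The proposition is a worst-case bound, and for non-normal operators ideal GMRES can be strictly larger than worst-case GMRES (Faber--Joubert--Knill--Manteuffel; Toh), so the intermediate inequality you need is potentially stronger than the statement itself and is not known to hold. This is precisely why the cited proofs avoid the polynomial characterization: Moret and Axelsson--Kar\'atson work with the subspace-distance characterization $\n{r_k}_H = \dist(r_0, \fQ\,\mK^k(\fQ,r_0))$ in the $H$-norm, use that the earlier residuals $r_0,\dots,r_{k-1}$ already lie in $\mK^k$, and deflate with truncated-SVD finite-rank approximants of the compact part via $\sigma_j(\fK) = \inf\{\n{\fK-\fT_i}_H : \operatorname{rank} \fT_i < j\}$ --- competitors that depend on $r_0$ and are not of the form $p(\fQ)r_0$ for a uniform $p$. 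To repair your argument you would have to keep that $r_0$-dependence and run the induction on distances to the nested Krylov subspaces, which is exactly the proof of \cite[Theorem 3.1]{axelssonsuperlinear}.
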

Remark that $\overline{\sigma}_k(\fK)\to 0$ as $k\to \infty$ evidencing the super-linear convergence rate for residuals of weighted GMRES in this particular case. Furthermore, the convergence rate depends directly on the singular values of the continuous operator $\fK$. The following result shows that the above is applicable to \A$^p$ as well.
\begin{theorem}[GMRES: Super-linear convergence estimates for \A$^p$]
\label{thm:superlinear_sf}
Consider the PGE problem \A$^{p}$ in \eqref{eq:Ap} for any $p \geq0$. Then, for $1 \leq k \leq N$, it holds that
\be\label{eq:sigmamean}
\left(\frac{\n{\br_k}_H}{\n{\br_0}_H}\right)^\frac{1}{k} \leq \frac{ \overline{\sigma}_k(\fK)}{\gamma_\fA \gamma_\fN} \quad \left({\leq \frac{\iii{\fK}_p}{\gamma_\fA \gamma_\fN}  k^{-\frac{1}{p}}}\quad \text{if}\quad p > 0 \right),
\ee 
and 
\be\label{eq:sigmameanEucl}
\left(\frac{\n{\tilde{\br}_k}_2}{\n{\br_0}_2}\right)^\frac{1}{k} \leq \KL  \frac{\overline{\sigma}_k(\fK)}{\gamma_\fA \gamma_\fN} \quad \left({\leq \KL\frac{\iii{\fK}_p}{\gamma_\fA \gamma_\fN}}  k^{-\frac{1}{p}}\quad \text{if}\quad p > 0\right),
\ee
wherein \rev{$\fK:=\fA-\fI \in \mC(H)$ for $p=0$ (resp.~$\mC^p(H)$ for $p>0$)} and $\overline{\sigma}_k(\fK)$ \rev{from \eqref{eq:partial_singular_values}}.
\end{theorem}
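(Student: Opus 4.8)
The plan is to realize $\A^{p}$ as a compact (indeed finite-rank) perturbation of the identity on the finite-dimensional Hilbert space $(\IC^N,(\cdot,\cdot)_H)$, apply the continuous super-linear estimate of \Cref{prop:classic_superlinear} at the matrix level, and then dominate the resulting matrix quantities by the continuous data $\gamma_\fA$, $\gamma_\fN$ and $\overline{\sigma}_k(\fK)$. Recall from \eqref{eq:Kmatrix} that $\bK=\bA-\bN$, so the matrix of $\A^{p}$ is $\bN^{-1}\bA=\bI+\bN^{-1}\bK$, with $\bN^{-1}\bK$ the Galerkin matrix of $\fN_h^{-1}\fK_h$. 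Since $\bN^{-1}\bK$ has finite rank, $\bN^{-1}\bA$ is a second-kind operator on $(\IC^N,(\cdot,\cdot)_H)$ and lies in $\mF\mC^{p}$ trivially; as weighted GMRES is precisely GMRES in the $(\cdot,\cdot)_H$ inner product, \Cref{prop:classic_superlinear} applied to $\bN^{-1}\bA\,\bu=\bN^{-1}\bb$ yields
\[
\left(\frac{\n{\br_k}_H}{\n{\br_0}_H}\right)^{\frac1k}\leq \n{(\bN^{-1}\bA)^{-1}}_H\,\overline{\sigma}_k^H(\bN^{-1}\bK),
\]
where $\overline{\sigma}_k^H$ is formed from the $H$-singular values of $\bN^{-1}\bK=\bN^{-1}\bA-\bI$.

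It remains to bound the two factors. For the first, $(\bN^{-1}\bA)^{-1}=\bA^{-1}\bN$ represents $\fA_h^{-1}\fN_h$, so arguing as in the proof of \Cref{thm:opprecond} (with only $\fN$ and $\fA$ present) gives $\n{(\bN^{-1}\bA)^{-1}}_H\leq \|\fn\|/\gamma_\fA=1/\gamma_\fA$, using that $\fN=\fI$ forces the pairing $\fn$ to be the duality pairing, whence $\|\fn\|=1$. For the second factor I would first peel off $\fN_h^{-1}$ by submultiplicativity of singular values (Ky Fan), $\sigma_j^H(\fN_h^{-1}\fK_h)\leq \|\fN_h^{-1}\|\,\sigma_j^H(\fK_h)\leq \gamma_\fN^{-1}\sigma_j^H(\fK_h)$, and then pass from the discrete to the continuous compact operator. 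Summing over $j=1,\dots,k$ and dividing by $k$ gives $\overline{\sigma}_k^H(\bN^{-1}\bK)\leq \gamma_\fN^{-1}\overline{\sigma}_k(\fK)$, and combining with the first factor produces \eqref{eq:sigmamean}; the tail estimate follows from Hölder's inequality together with the Carleman summability $\iii{\fK}_p<\infty$, which yields $\overline{\sigma}_k(\fK)\leq \iii{\fK}_p\,k^{-1/p}$.

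The Euclidean bound \eqref{eq:sigmameanEucl} is obtained exactly as in the proof of \Cref{lemma_linear}: the minimal-residual property gives $\n{\tilde{\br}_k}_2\leq \n{\br_k}_2$, while the synthesis operator bounds $\n{\br_k}_2\leq \gamma_{\Lambda_h}^{-1}\n{\br_k}_H$ and $\n{\br_0}_H\leq \|\Lambda_h\|\,\n{\br_0}_2$, so that $\n{\tilde{\br}_k}_2/\n{\br_0}_2\leq \KL\,(\n{\br_k}_H/\n{\br_0}_H)\leq \KL\,(\overline{\sigma}_k(\fK)/(\gamma_\fA\gamma_\fN))^{k}$. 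Taking $k$-th roots and using $\KL^{1/k}\leq \KL$ (since $\KL\geq 1$) gives the stated estimate, with the $k^{-1/p}$ consequence following as above.

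The main obstacle is the singular-value domination $\sigma_j^H(\fK_h)\leq \sigma_j(\fK)$, i.e.\ that the $H$-singular values of the Galerkin compression of $\fK$ onto $X_h\times Y_h$ do not exceed those of the continuous compact operator $\fK$. This is the only genuinely non-elementary step: it rests on the Courant--Fischer/min-max characterization of singular values applied to the compression, and is precisely the content of Proposition~4.2 and the surrounding analysis in \cite{axelssonsuperlinear}, which I would invoke. Everything else is bookkeeping with the inf-sup constants and the synthesis operator already developed in \Cref{thm:opprecond} and \Cref{lemma_linear}.
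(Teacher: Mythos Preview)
Your proposal is correct and follows essentially the same route as the paper: write $\bN^{-1}\bA=\bI+\bN^{-1}\bK$, apply \Cref{prop:classic_superlinear} on $(\IC^N,(\cdot,\cdot)_H)$, bound $\n{(\bN^{-1}\bA)^{-1}}_H\le \|\fn\|/\gamma_\fA=1/\gamma_\fA$, control the $H$-singular values via $\sigma_j^H(\bN^{-1}\bK)\le \gamma_\fN^{-1}\sigma_j(\fK_h)\le \gamma_\fN^{-1}\sigma_j(\fK)$ by invoking \cite[Proposition~4.2]{axelssonsuperlinear}, and transfer to the Euclidean norm exactly as in \Cref{lemma_linear}. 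The only cosmetic differences are that the paper cites \cite[Theorem~2.2]{winther1980some} for the tail estimate $\overline{\sigma}_k(\fK)\le \iii{\fK}_p k^{-1/p}$ where you invoke H\"older, and that you make explicit the step $\KL^{1/k}\le\KL$ which the paper leaves implicit.
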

\begin{proof} 
By hypothesis, we have that $\bN^{-1} \bA = \bI + \bN^{-1} \bK$, with $\bK$ such as in \eqref{eq:Kmatrix}. Following the same steps as in Axelsson \cite{axelssonsuperlinear}, we deduce that the following relations hold (cf.~proofs of \Cref{thm:opprecond} and \Cref{lemma:coicideFoV}):
$$
\n{(\bN^{-1}\bA)^{-1}}_H \leq \frac{\|\fn\|}{\gamma_\fA}\ = \frac{1}{\gamma_\fA},
$$
since $\fN = \fI$. Furthermore, it holds that the singular values \cite[Proposition 4.2]{axelssonsuperlinear}
$$\label{eq:ineq_sing}
\sigma_j^{{H}}(\bN^{-1} \bK) \leq \frac{1}{\gamma_\fN} \sigma_j(\fK_h) \leq \frac{1}{\gamma_\fN} \sigma_j(\fK).
$$
Therefore, for $1\leq k \leq N$, following \cite{axelssonsuperlinear} and using \Cref{prop:classic_superlinear}, we can show that
\begin{align*}
\frac{ \n{\br_k}_H}{\n{\br_0}_H} &\leq \frac{\n{(\bN^{-1}\bA)^{-1}}_H}{k} \sum_{j=1}^k \sigma_j^{{H}}(\bN^{-1}\bK)\leq \frac{1}{\gamma_\fA} \sum_{j=1}^k \frac{\sigma_j^{{H}}(\bN^{-1}\bK)}{k} \leq\frac{1}{\gamma_\fA\gamma_\fN} \sum_{j=1}^k \frac{\sigma_j(\fK)}{k}.
\end{align*}
Now, if $\fK \in \mC^p(H)$ for any $p>0$, we follow \cite[Theorem 2.2]{winther1980some} and derive
\begin{align*}
\sum_{j=1}^k\frac{\sigma_j(\fK)}{k}\leq \iii{\fK}_p k^{-\frac{1}{p}},
\end{align*}
providing the final estimate in energy norm.

Finally, the bounds in Euclidean norm are deduced in the same fashion as in \Cref{lemma_linear}.
\end{proof}
\begin{remark}
This result appears to be new and it justifies the positive results of employing mass matrix preconditioning, i.e.~$\fN := \fI$, to transfer the super-linear convergence bounds from the continuous to the discrete level. Indeed, the choice of $\fN= \fI$ guarantees a discrete system $\bN^{-1} \bA = \bI + \bN^{-1} \bK$ of the form $\bI$ plus discretization of a compact operator. The latter enables the application of the classical super-linear results for GMRES given in \Cref{prop:classic_superlinear}. Notice that the bounds in \eqref{eq:sigmamean} and \eqref{eq:sigmameanEucl} depend on $k$ via $\overline{\sigma}_k(\fK)$: they are not one-step bounds, and do not generalize to GMRES$(m)$, as the relative error at iteration $k$ for $2\leq k \leq N$ depends on previous iterations.
\end{remark}

\begin{remark}
The super-linear convergence rate depends on the decay rate of $\overline{\sigma}_k(\fK)$. For example, for trace class operators ($p=1$), it holds that $\n{\br_k}_H / \n{\br_0}_H = \mO(k^{-1})$ while for Hilbert-Schmidt operators ($p=2$), one observes the faster rate $\n{\br_k}_H / \n{\br_0}_H = \mO(k^{-2})$ \cite[Chapter XI]{dunford1963linear}. Results describing the Carleman class index for pseudo-differential operators (resp.~the Laplace double-layer operator) can be found in \cite{SOBOLEV20145886} (resp.~\cite{bessoud2006q,miyanishi2015}) and will be investigated elsewhere.
\end{remark}

\subsection{Super-linear convergence estimates for GMRES applied to \CA$_{\mu,\nu}^p$}

We next show that the reasoning in \Cref{thm:superlinear_sf} can also be applied to \CA$^p_{\mu,\nu}$.
\begin{theorem}[GMRES: Super-linear convergence estimates for \CA$_{\mu,\nu}^p$]\label{thm:superMN}
Consider {\CA$^p_{\mu,\nu}$ in \eqref{eq:CAmnp} for any $p\geq 0$} and define $\fK_{\mu,\nu}:=\fC_\mu \fN^{-1} \fA_\nu -\fI \in \mC(H)$ \rev{for $p=0$ (resp.~$\mC^p(H)$ for $p>0$)}. Then, for weighted and Euclidean GMRES, respectively, it holds that
\be\label{eq:Extended_Theta0}
\begin{split}
\Theta_k &\leq \frac{\|\fn\|}{ \gamma_\fC \gamma_\fA\gamma_\fM}\frac{ \overline{\sigma}_k(\fK_{\mu,\nu})}{(1-\mu)(1-\nu)} \quad\left(
  \leq \frac{\|\fn\| }{ \gamma_\fC \gamma_\fA\gamma_\fM}\frac{\iii{\fK_{\mu,\nu}}_p}{(1-\mu)(1-\nu)} k^{-\frac{1}{p}}\quad \text{if} \quad p>0\right),
\end{split}
\ee
and
\be\label{eq:Extended_Thetap}
\begin{split}
\widetilde{\Theta}_k&\leq \KL \frac{\|\fn\|}{ \gamma_\fC \gamma_\fA\gamma_\fM}\frac{\overline{\sigma}_k(\fK_{\mu,\nu})}{(1-\mu)(1-\nu)}\quad
\left(\leq \KL \frac{\|\fn\|}{ \gamma_\fC \gamma_\fA\gamma_\fM}\frac{\iii{\fK_{\mu,\nu}}_p}{(1-\mu)(1-\nu)} k^{-\frac{1}{p}}\quad \text{if} \quad p>0\right),
\end{split}
\ee
with ${\Theta}_k$ and $\widetilde{\Theta}_k$ defined in \eqref{eq:ConvRates} and $\overline{\sigma}_k(\cdot)$ in \rev{\eqref{eq:partial_singular_values}}.
\end{theorem}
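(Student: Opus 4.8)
The plan is to transport the argument of \Cref{thm:superlinear_sf} to the fully preconditioned, bi-parametric setting, the only structural change being that the relevant second-kind operator is now $\fK_{\mu,\nu}=\fC_\mu\fN^{-1}\fA_\nu-\fI$ rather than $\fA-\fI$. The entry point is the factorisation
\[
\bP_\mu\bA_\nu=\bM^{-1}\bC_\mu\bN^{-1}\bA_\nu=\bI+\bM^{-1}\bK_{\mu,\nu},
\]
which holds because $\fM:=\fI$ for $\CA_{\mu,\nu}^p$ and $\bK_{\mu,\nu}=\bC_\mu\bN^{-1}\bA_\nu-\bM$ by \eqref{eq:Kmatrix}. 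Thus $\bP_\mu\bA_\nu$ is the identity plus the Galerkin discretisation of the compact operator $\fK_{\mu,\nu}\in\mC^p(H)$, precisely the configuration to which the discrete form of \Cref{prop:classic_superlinear} applies, with $\fQ$ (resp.~$\fK$) replaced by $\bP_\mu\bA_\nu$ (resp.~$\bM^{-1}\bK_{\mu,\nu}$).

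Two estimates then feed into that proposition. First I would bound $\n{(\bP_\mu\bA_\nu)^{-1}}_H$: since $X=:H$ the $X$- and $H$-norms agree, so I can reuse the lower spectral estimate established in the proof of \Cref{thm:strang}, and with $\fM=\fI$ one has $\|\fm\|=1$, giving $\n{(\bP_\mu\bA_\nu)^{-1}}_H\leq\|\fn\|/(\gamma_\fC\gamma_\fA(1-\mu)(1-\nu))$. Second, I would estimate the $H$-singular values of the compact part: following \cite[Proposition 4.2]{axelssonsuperlinear}, the prefactor $\bM^{-1}$ scales the $H$-singular values by at most $\gamma_\fM^{-1}$ (here $\gamma_\fM$ is the \emph{discrete} inf-sup constant of $\fm$, which need not equal the continuous norm $\|\fm\|=1$), while the Galerkin compression is non-expansive on singular values, so that
\[
\sigma_j^H(\bM^{-1}\bK_{\mu,\nu})\leq\tfrac{1}{\gamma_\fM}\,\sigma_j(\fK_{h,\mu,\nu})\leq\tfrac{1}{\gamma_\fM}\,\sigma_j(\fK_{\mu,\nu}),\qquad 1\leq j\leq N,
\]
and hence $\overline{\sigma}_k^H(\bM^{-1}\bK_{\mu,\nu})\leq\gamma_\fM^{-1}\overline{\sigma}_k(\fK_{\mu,\nu})$.

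Multiplying these through the discrete version of \Cref{prop:classic_superlinear} yields
\[
\Theta_k\leq\n{(\bP_\mu\bA_\nu)^{-1}}_H\,\overline{\sigma}_k^H(\bM^{-1}\bK_{\mu,\nu})\leq\frac{\|\fn\|}{\gamma_\fC\gamma_\fA\gamma_\fM}\,\frac{\overline{\sigma}_k(\fK_{\mu,\nu})}{(1-\mu)(1-\nu)},
\]
which is the weighted estimate in \eqref{eq:Extended_Theta0}; the $k^{-1/p}$ refinement for $p>0$ follows from $\overline{\sigma}_k(\fK_{\mu,\nu})\leq\iii{\fK_{\mu,\nu}}_p\,k^{-1/p}$ exactly as in \Cref{thm:superlinear_sf}. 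For the Euclidean rate $\widetilde{\Theta}_k$ I would pass through the synthesis operator as in \Cref{lemma_linear}, using $\n{\bP_\mu\tilde{\br}_k}_2\leq\n{\bP_\mu\br_k}_2\leq\gamma_{\Lambda_h}^{-1}\n{\bP_\mu\br_k}_H$ together with $\n{\bP_\mu\br_0}_H\leq\|\Lambda_h\|\,\n{\bP_\mu\br_0}_2$; this inflates the weighted bound by the factor $\KL$ (crudely using $(\KL)^{1/k}\leq\KL$ since $\KL\geq1$) and produces \eqref{eq:Extended_Thetap}.

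The step I expect to be the main obstacle is the singular-value transfer $\sigma_j^H(\bM^{-1}\bK_{\mu,\nu})\leq\gamma_\fM^{-1}\sigma_j(\fK_{\mu,\nu})$, which bundles two delicate facts from \cite[Proposition 4.2]{axelssonsuperlinear}: the multiplicative effect of $\bM^{-1}$ on the $H$-weighted singular values, and the domination of the discrete singular values $\sigma_j(\fK_{h,\mu,\nu})$ by the continuous ones $\sigma_j(\fK_{\mu,\nu})$ — the latter being where conformity $X_h\subset X$, $V_h\subset V$ and the min--max characterisation of singular values enter. A secondary item to verify is that the admissible perturbations $\fc_\mu,\fa_\nu$ of \Cref{def:perturbation} still induce genuine continuous operators, so that $\fK_{\mu,\nu}$ is truly an element of $\mC^p(H)$ and \Cref{prop:classic_superlinear} is legitimately applicable at the continuous level.
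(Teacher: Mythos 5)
Your proposal is correct and follows essentially the same route as the paper's own proof: the inverse-norm bound $\n{(\bP_\mu\bA_\nu)^{-1}}_H\leq \|\fn\|/(\gamma_\fC\gamma_\fA(1-\mu)(1-\nu))$ via \Cref{prop:perturbed}, the singular-value transfer $\sigma_j^H(\bM^{-1}\bK_{\mu,\nu})\leq \gamma_\fM^{-1}\sigma_j(\fK_{\mu,\nu})$ from \cite[Proposition 4.2]{axelssonsuperlinear}, their combination through the discrete form of \Cref{prop:classic_superlinear}, Winther's estimate for $p>0$, and the synthesis-operator argument of \Cref{lemma_linear} for the Euclidean rate. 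Your explicit factorisation $\bP_\mu\bA_\nu=\bI+\bM^{-1}\bK_{\mu,\nu}$ and the remark $\|\fm\|=1$ only make visible what the paper leaves implicit.
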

\begin{proof}
Consider \CA$^p_{\mu,\nu}$ and follow the proof of \Cref{thm:superlinear_sf}. First, we use \Cref{prop:perturbed} to deduce that
$$
\n{(\bP_\mu \bA_\nu)^{-1}}_H ={ \n{\bA^{-1}_\nu \bN \bC^{-1}_\mu \bM}_H} \leq \frac{\|\fn\|}{\gamma_\fA \gamma_\fC}\frac{1}{(1-\mu)(1-\nu)}.
$$
Next, for $1\leq j\leq N$, one has
$$
 \sigma_j^{{H}} (\bM^{-1}\bC_\mu \bN^{-1}\bA_\nu - \bI)  = {\sigma_j^{{H}} (\bM^{-1} \bK_{\mu,\nu}) }\leq  \frac{1}{\gamma_\fM}\sigma_j(\fK_{\mu,\nu}),
 $$
with $\bK_{\mu,\nu} = \bC_\mu \bN^{-1} \bA - \bM$ as in \eqref{eq:Kmatrix}. Therefore, we obtain
\begin{align*}
\Theta_k &\leq \frac{\n{(\bP_\mu\bA_\nu)^{-1}}_H}{k} \sum_{j=1}^k \sigma_j^{{H}}(\bM^{-1} \bK_{\mu,\nu}) \leq \frac{\|\fn\|}{\gamma_\fC\gamma_\fA \gamma_\fM} \frac{ \overline{\sigma}_k (\fK_{\mu,\nu})}{(1-\mu)(1-\nu)}.
\end{align*}
The second bound in \eqref{eq:Extended_Theta0} and \eqref{eq:Extended_Thetap} follows by the same arguments as in the proof of \Cref{thm:superlinear_sf}.
\end{proof}

\Cref{thm:superMN} describes precisely the residual convergence behavior of GMRES for \CA$^p_{\mu,\nu}$ for $p\geq0$. In particular, \eqref{eq:Extended_Thetap} shows that the Euclidean GMRES converges super-linearly, up to a $\KL$-term as observed experimentally for the electric field integral equation on screens in \cite{hiptmair2020preconditioning}.
\begin{corollary}[$h$-Asymptotics]\label{remark:Asymptotics} Consider \CA$_{\mu,\nu}$ in \eqref{eq:linear_CAmn}, for $\mu \to 0$ and $\nu\to 0$ as $h\to0$. Additionally, let us suppose that (i) the finite dimensional subspaces are dense in their function space, satisfying the approximability property \cite[Definition 2.14]{ern2004theory}; and, (ii) the forms in \CA~have a uniform discrete inf-sup condition with respect to $h$. Then, for vanishing $h$, the following statements hold:
\begin{enumerate}
\item[(i)] $\|u-u_h\|_X\to 0$ in \Cref{lemma:strang};
\item[(ii)] $\K_\star$ in \Cref{thm:opprecond}, and subsequently $\K_{\star,\mu,\nu}$ \Cref{thm:strang} remain bounded ($h$-independence);
\item[(iii)] Under \Cref{ass:FOVmn}, the residual $\Theta_k^{(m)}$ in \eqref{eq:linear_CAmn} remains bounded ($h$-independent linear convergence);
\item[(iv)] For \CA$^p_{\mu,\nu}$ in \eqref{eq:CAmnp}, $p\geq0$, the residual $\Theta_k \to 0$ as $k\to \infty$ in \Cref{thm:superMN} ($h$-independent super-linear convergence).
\end{enumerate}
\end{corollary}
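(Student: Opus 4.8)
The plan is to treat the four assertions in turn, in each case substituting the two standing hypotheses into a bound already established earlier in the paper. Throughout, hypothesis (ii) of the corollary---the uniform discrete inf-sup condition---furnishes positive constants, independent of $h$, bounding $\gamma_\fA,\gamma_\fC,\gamma_\fM,\gamma_\fN$ from below, while the continuity norms $\|\fa\|,\|\fc\|,\|\fm\|,\|\fn\|$ are those of the fixed continuous forms and are therefore already $h$-independent. Hypothesis (i)---approximability of $\{X_h\}$---gives $\inf_{w_h\in X_h}\|u-w_h\|_X\to 0$ as $h\to 0$. I will use these facts repeatedly, together with $\mu,\nu\to 0$.

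For (i) I would invoke \Cref{lemma:strang} applied to the $\CA_{\mu,\nu}$ solution $u_{h,\nu}$. The uniform inf-sup bound makes $\K_\fA=\|\fa\|/\gamma_\fA$ uniformly bounded, so the quasi-optimality constant $(1+\K_\fA)(1+\K_\fA/(1-\nu))$ stays bounded as $h\to 0$; multiplied by the vanishing best-approximation term it tends to $0$. The data term $\tfrac{2\nu}{\gamma_\fA(1-\nu)}\|b_h\|_{Y_h'}$ vanishes since $\nu\to 0$ and $\|b_h\|_{Y_h'}\le\|b\|_{Y'}$ is bounded, whence $\|u-u_{h,\nu}\|_X\to 0$. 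For (ii), the quantity $\K_\star$ of \eqref{eq:kS_star} is a quotient of the fixed continuity norms by the uniformly bounded-below inf-sup constants, hence bounded in $h$; and $\K_{\star,\mu,\nu}=\K_\star\frac{(1+\mu)(1+\nu)}{(1-\mu)(1-\nu)}\to\K_\star$ since $\mu,\nu\to 0$, so it too remains bounded.

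For the linear rate (iii) I would use \Cref{lemma_linear_strang}: the boundedness of $\K_{\star,\mu,\nu}$ from (ii) by some $h$-independent $C<\infty$ gives $\Theta_k^{(m)}\le(1-1/\K_{\star,\mu,\nu})^{1/2}\le(1-1/C)^{1/2}<1$, an $h$-independent contraction factor. For the super-linear rate (iv) I would use \Cref{thm:superMN}: the prefactor $\frac{\|\fn\|}{\gamma_\fC\gamma_\fA\gamma_\fM}\frac{1}{(1-\mu)(1-\nu)}$ is bounded by the same reasoning, and since $\fK_{\mu,\nu}\in\mC^p(H)$ is compact its singular values decay, so the Ces\`aro mean $\overline{\sigma}_k(\fK_{\mu,\nu})\to 0$ as $k\to\infty$ and hence $\Theta_k\to 0$; for $p>0$ the explicit estimate $\Theta_k\le C\,\iii{\fK_{\mu,\nu}}_p k^{-1/p}$ exhibits the rate.

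The main obstacle is making (iv) genuinely \emph{$h$-independent}: the compact operator $\fK_{\mu,\nu}=\fC_\mu\fN^{-1}\fA_\nu-\fI$ depends on $h$ through $\mu,\nu$, so its singular values---and the Carleman norm $\iii{\fK_{\mu,\nu}}_p$---may vary with $h$. The clean fix is to argue that $\fK_{\mu,\nu}\to\fK=\fC\fN^{-1}\fA-\fI$ (say, in $\mC^p$-norm) as $\mu,\nu\to 0$, so that $\sup_h\iii{\fK_{\mu,\nu}}_p<\infty$ and the estimate $C k^{-1/p}$ is uniform in $h$; for $p=0$ one only controls $\overline{\sigma}_k(\fK_{\mu,\nu})\to 0$ for each fixed $h$, and genuine uniformity then requires an equi-decay (collective-compactness) hypothesis on the family $\{\fK_{\mu,\nu}\}$. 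The remaining items (i)--(iii) are routine once the uniform inf-sup and approximability hypotheses are in hand.
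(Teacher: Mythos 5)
Your proposal is correct and takes essentially the same approach as the paper, which states this corollary without a written proof precisely because it follows by substituting the uniform discrete inf-sup and approximability hypotheses (together with $\mu,\nu\to 0$) into \Cref{lemma:strang}, \Cref{thm:opprecond}, \Cref{thm:strang}, \Cref{lemma_linear_strang} and \Cref{thm:superMN}, exactly as you do. Your caveat on item (iv)---that the family $\fK_{\mu,\nu}$ depends on $h$ through the perturbed forms, so a decay rate for $\overline{\sigma}_k(\fK_{\mu,\nu})$ that is uniform in $h$ would require an additional hypothesis such as $\sup_h \iii{\fK_{\mu,\nu}}_p<\infty$ or an equi-decay/collective-compactness condition---is a genuine subtlety that the paper's unproved statement glosses over (it only secures $h$-independence of the prefactor), not a gap in your argument.
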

\begin{remark}

\Cref{thm:superMN} requires the operator $\fK_{\mu,\nu}$ to be compact so as to ensure application of \Cref{prop:classic_superlinear}. Recent results by Bletcha \cite{blechta2021stability} allow to consider a more general \Cref{prop:classic_superlinear} with $\fA : H\to H$ of the form $\fA = \fQ +\fK$, with $\fQ$ a bounded invertible operator and $\fK$ compact. The latter could allow to relax the compactness for $\fK_{\mu,\nu}$ and to analyze \CA$_{\mu,\nu}$ as a general bounded perturbation of \CA$^p$. This will be investigated elsewhere.
\end{remark}

\subsection{Elliptic Case}
We give further insight on the bi-parametric operator preconditioning framework by considering the elliptic case for OP-BG for $X=:H$ and $V$ being Hilbert spaces. To this end, we assume that $\fA$ is $\langle X\rangle$-elliptic and $\fC$ is $\langle V\rangle $-elliptic. Therefore, we have the {ellipticity conditions}
$$
\fa ( u,u) \geq \alpha_\fA \n{u}^2_X\quad \text{and} \quad \fc ( v, v) \geq \alpha_\fC \n{v}^2_{V},
$$
for all $u \in X$ and all $v \in V$. Notice that continuous ellipticity implies a discrete inf-sup condition for conforming discretization spaces, with $\gamma_\fA = \alpha_\fA$ and $\gamma_\fC = \alpha_\fC$, respectively, and allows to apply our previous analysis---without requiring \Cref{ass:FOV}.

For $p\geq0$, problem \CA$^p$ leads to $\fC \fN^{-1} \fA = \fI + \fK$ with $\fK$ compact and self-adjoint. Thus, we introduce the ordered eigenvalues $|\lambda_{i+1}(\fK)|\leq |\lambda_i(\fK)|$ for $i \geq 1$. By \cite[Section 2]{winther1980some}, $|\lambda_i(\fK)|= \sigma_i(\fK)$ and the Carleman class in \eqref{eq:Carleman} simplifies to the Neumann-Schatten class 
\be
\iii{\fK}_p := \left( \sum_{i=1}^\infty |\lambda_i(\fK) |\right)^{1/p}<\infty. 
\ee
As ellipticity allows for more refined bounds, one can examine the use of preconditioned CG solvers \cite[Section 13.1]{steinbach2007numerical}.
\begin{corollary}[Elliptic Case]\label{thm:strang_elliptic} 
Consider \CA$_{\mu,\nu}$ with $\fc_\mu \in \Phi_{h,\mu}(\fc)$, $\fa_\nu \in \Phi_{h,\nu}(\fa)$, such that $\fA_\nu$ is $\langle X\rangle$-elliptic and $\fC_\mu$ is $\langle V \rangle$-elliptic.Then, the continuous and perturbed problems have a unique solution, $u$ and $u_{h,\nu}$, respectively, with the following error bound
\begin{align*}
\n{u - u_{h,\nu}}_X& \leq \inf_{w_h\in X_h} \left(\frac{{\K_\fA}}{1-\nu} \|u - w_h\|_X +\frac{\nu}{1-\nu} \n{w_h}_X\right)
+ \frac{\nu}{\gamma_{\fA}(1-\nu)}\n{b_{{h}}}_{Y_h'}\\
&{\leq  \left(\frac{{\K_\fA^{2}}}{1-\nu}\right)\inf_{w_h\in X_h} \n{u - w_h}_X + \frac{2\nu}{\gamma_{\fA}(1-\nu)}\n{b_{{h}}}_{Y_h'}.}
\end{align*}
with {$\K_\fA$} defined in \eqref{eq:discrete_condition_number}. Furthermore, it holds that 
\be 
\kappa_S (\bP_\mu \bA_\nu) = \kappa_2 (\bP_\mu \bA_\nu) \leq \K_{\star,\mu,\nu},
\ee 
with $\K_{\star,\mu,\nu}$ in \eqref{eq:k_mu_nu}. Therefore, for $\bx_0 \neq \bu_\nu$ and $1\leq k \leq N$, the $k$-th iterate $\bx_k$ of CG with an error $\mathbf{e}_k:= \bx_k - \bu_\nu$ is bounded in the $A_\nu$-norm as
\be\label{eq:cg_bound_linear}
 \Theta_k^\textup{CG}  := \left(\frac{\n{\mathbf{e}_k}_{A_\nu}}{\n{\mathbf{e}_0}_{A_\nu}}\right)^{\frac{1}{k}} \leq 2^\frac{1}{k} \left( 1 - \frac{2}{\sqrt{\K_{\star,\mu,\nu}} +1 }\right).
\ee
Finally, consider \CA$_{\mu,\nu}^p$ for $p\geq 0$. It holds that
\be 
\begin{split}
\Theta_k^\textup{CG} &\leq \frac{2\|\fn\|}{\gamma_\fC \gamma_\fA\gamma_\fM}\frac{1}{(1-\mu)(1-\nu)}\cdot  \frac{1}{k} \sum_{j=1}^k |\lambda_j(\fK_{\mu,\nu})|
\end{split}
\ee
and, if $p>0$, one retrieves
\be 
\begin{split}
\Theta_k^\textup{CG} \leq \frac{2\|\fn\|}{\gamma_\fC \gamma_\fA \gamma_\fM} \frac{\iii{\fK_{\mu,\nu}}_p}{(1-\mu)(1-\nu)}k^{-\frac{1}{p}} .
\end{split}
\ee

\end{corollary}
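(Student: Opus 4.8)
The plan is to obtain all four assertions by specializing the earlier general results to the self-adjoint, positive-definite situation created by ellipticity. At the outset I would record that, by \Cref{prop:perturbed}, $\fa_\nu$ and $\fc_\mu$ remain elliptic with $\gamma_{\fA_\nu}\geq\gamma_\fA(1-\nu)$ and $\gamma_{\fC_\mu}\geq\gamma_\fC(1-\mu)$; self-adjointness then makes the Galerkin matrices $\bA_\nu$ and $\bC_\mu$, and hence $\bP_\mu=\bM^{-1}\bC_\mu\bM^{-H}$, Hermitian positive definite, while continuous ellipticity furnishes the discrete inf--sup constants $\gamma_\fA=\alpha_\fA$, $\gamma_\fC=\alpha_\fC$. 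Existence and uniqueness of the continuous $u$ and of the perturbed discrete $u_{h,\nu}$ follow from Lax--Milgram and \Cref{prop:well_posedness}, respectively.

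For the a priori error bound I would deliberately avoid a ``perturbed Galerkin orthogonality'' against the continuous solution $u$, because \Cref{def:perturbation} controls $\fa-\fa_\nu$ and $b-b_\nu$ only on the discrete spaces $X_h\times Y_h$; this is precisely where care is required. Instead I would route the estimate through the unperturbed Galerkin solution $u_h$. Coercivity gives the sharpened C\'ea bound $\n{u-u_h}_X\leq\K_\fA\inf_{w_h\in X_h}\n{u-w_h}_X$ (the quadratic cancellation $\gamma_\fA\n{u-u_h}_X^2\leq|\fa(u-u_h,u-u_h)|=|\fa(u-u_h,u-w_h)|$ removes the additive $1$ of \Cref{lemmaCea}). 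Subtracting the two discrete equations and using $\fa(u_h,v_h)=b(v_h)$ yields, for all $v_h\in X_h$, $\fa_\nu(u_h-u_{h,\nu},v_h)=\bigl(b(v_h)-b_\nu(v_h)\bigr)+\bigl(\fa_\nu(u_h,v_h)-\fa(u_h,v_h)\bigr)$; testing with $v_h=u_h-u_{h,\nu}\in X_h$ and applying discrete ellipticity of $\fa_\nu$ together with \Cref{def:perturbation} (now legitimately, since both arguments lie in $X_h$) gives $\n{u_h-u_{h,\nu}}_X\leq\frac{\nu}{1-\nu}\bigl(\n{u_h}_X+\gamma_\fA^{-1}\n{b_h}_{Y_h'}\bigr)$. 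The triangle inequality with $\n{u_h}_X\leq\gamma_\fA^{-1}\n{b_h}_{Y_h'}$ then produces the stated estimates, the quasi-optimality factor collapsing from $(1+\K_\fA)(1+\K_\fA/(1-\nu))$ to $\K_\fA^2/(1-\nu)$ in the coercive regime $\K_\fA\geq1$. I expect this bookkeeping of constants, and the discipline of keeping every perturbation term inside $X_h$, to be the main obstacle.

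For the condition numbers I would observe that $\bP_\mu\bA_\nu$, being the product of two Hermitian positive-definite matrices, is self-adjoint and positive definite in the $\bP_\mu^{-1}$-inner product, hence similar to the symmetric positive-definite matrix $\bP_\mu^{1/2}\bA_\nu\bP_\mu^{1/2}$ on which preconditioned CG effectively acts. For that symmetrized operator singular values equal eigenvalues, so $\kappa_S=\kappa_2$, and similarity preserves the spectrum, whence $\kappa_S(\bP_\mu\bA_\nu)=\kappa_S(\bP_\mu^{1/2}\bA_\nu\bP_\mu^{1/2})\leq\K_{\star,\mu,\nu}$ directly from \Cref{thm:strang}; the factor $\KL^2$ of the general Euclidean bound is absent precisely because the relevant norm is the aligned energy norm. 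The linear estimate \eqref{eq:cg_bound_linear} is then the classical CG bound $\n{\mathbf{e}_k}_{A_\nu}\leq 2\bigl((\sqrt\kappa-1)/(\sqrt\kappa+1)\bigr)^k\n{\mathbf{e}_0}_{A_\nu}$ with $\kappa=\kappa_S(\bP_\mu\bA_\nu)$: rewriting $(\sqrt\kappa-1)/(\sqrt\kappa+1)=1-2/(\sqrt\kappa+1)$, taking $k$-th roots, and using that this quantity increases in $\kappa$ together with $\kappa\leq\K_{\star,\mu,\nu}$ gives the result.

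For the super-linear estimates I would mirror the proof of \Cref{thm:superMN}. Since $\fM=\fI$, the preconditioned operator is $\fI+\fK_{\mu,\nu}$ with $\fK_{\mu,\nu}=\fC_\mu\fN^{-1}\fA_\nu-\fI$ compact and, by the elliptic structure, self-adjoint with real eigenvalues satisfying $|\lambda_j(\fK_{\mu,\nu})|=\sigma_j(\fK_{\mu,\nu})$ as recorded in the elliptic preamble. I would reuse the two matrix estimates from that proof, $\n{(\bP_\mu\bA_\nu)^{-1}}_H\leq\frac{\|\fn\|}{\gamma_\fA\gamma_\fC}\frac{1}{(1-\mu)(1-\nu)}$ and $\sigma_j^H(\bM^{-1}\bK_{\mu,\nu})\leq\gamma_\fM^{-1}\sigma_j(\fK_{\mu,\nu})$, and combine them with the classical super-linear CG estimate---the CG analogue of \Cref{prop:classic_superlinear}, which carries the characteristic factor $2$---to obtain $\Theta_k^\textup{CG}\leq\frac{2\|\fn\|}{\gamma_\fC\gamma_\fA\gamma_\fM}\frac{1}{(1-\mu)(1-\nu)}\frac1k\sum_{j=1}^k|\lambda_j(\fK_{\mu,\nu})|$. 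The refinement for $p>0$ then follows exactly as in \Cref{thm:superlinear_sf} from $\frac1k\sum_{j=1}^k\sigma_j(\fK_{\mu,\nu})\leq\iii{\fK_{\mu,\nu}}_p\,k^{-1/p}$. The one genuinely new ingredient is the continuous-level super-linear CG bound for $\fI$ plus a compact self-adjoint operator, which replaces \Cref{prop:classic_superlinear} and supplies the factor $2$.
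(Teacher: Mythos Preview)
Your proposal is correct and follows the paper's approach throughout: sharpen C\'ea via Lax--Milgram for the error bound, use Hermitian positive-definiteness (via the symmetrized operator $\bP_\mu^{1/2}\bA_\nu\bP_\mu^{1/2}$, which you make more explicit than the paper) for $\kappa_S=\kappa_2\leq\K_{\star,\mu,\nu}$, apply the classical linear CG bound rewritten as $(\sqrt\kappa-1)/(\sqrt\kappa+1)=1-2/(\sqrt\kappa+1)$ together with monotonicity in $\kappa$, and import the two estimates $\n{(\bP_\mu\bA_\nu)^{-1}}_H$ and $\sigma_j^H(\bM^{-1}\bK_{\mu,\nu})\leq\gamma_\fM^{-1}\sigma_j(\fK_{\mu,\nu})$ from the proof of \Cref{thm:superMN} into the super-linear CG bound. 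The only deviation is tactical: for the a priori error estimate you compare $u_h$ and $u_{h,\nu}$ directly via $\fa_\nu(u_h-u_{h,\nu},u_h-u_{h,\nu})$ rather than rerunning the computation of \Cref{lemma:strang} with a generic $w_h$; this delivers the second displayed inequality cleanly, but to recover the first inequality in its stated form (with the infimum over $w_h$ and the factor $\K_\fA/(1-\nu)$) you would still need to test $\fa_\nu(u_{h,\nu}-w_h,u_{h,\nu}-w_h)$ as in the proof of \Cref{lemma:strang}.
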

\begin{proof}By the ellipticity hypothesis on the sesqui-linear form $\fa$, \Cref{lemmaCea} is replaced by the Lax-Milgram lemma \cite[Section 2.1.2]{ern2004theory}, providing the sharper quasi-optimality constant ${\K_\fA}$. Since the resulting system is Hermitian positive definite, the spectral and Euclidean condition numbers coincide. Next, we set $\varkappa:= \kappa_S(\bP_\mu\bA_\nu)$ and introduce the linear bound for the preconditioned CG with respect to the condition number \cite[Theorem 1.8]{kurics2010operator}:
\be 
\Theta_k^\textup{CG} \leq 2^\frac{1}{k} \left(\frac{ \sqrt{\varkappa} -1 }{\sqrt{\varkappa} +1}\right).
\ee
Observe that
$$
\left(\frac{ \sqrt{\varkappa} -1 }{\sqrt{\varkappa} +1}\right) =\left( 1 - \frac{2}{\sqrt{\varkappa}+1}\right)\leq \left( 1 - \frac{2}{\sqrt{\K_{\star,\mu,\nu}}+1}\right),
$$
leading to \eqref{eq:cg_bound_linear}. Since, \CA$^p_{\mu,\nu}$ entails a self-adjoint compact perturbation $\fK_{\mu,\nu}:= \fC_\mu \fN^{-1}\fA_\nu - \fI$, one has an ordered eigenvalue decomposition, and the application of super-linear result for CG \cite[Theorem 1.9]{kurics2010operator}:
$$
\Theta_k^\textup{CG} \leq 2 \n{(\bP_\mu \bA_\nu)^{-1}}_H \left(\frac{1}{k}\sum_{j=1}^k |\lambda_j (\bM^{-1}\bK_{\mu,\nu})|\right).
$$
Finally, one can show that (cf.~proof of \Cref{thm:superMN}):
$$
\n{\bA_\nu^{-1} \bP_\mu^{-1}}_H \leq\frac{\|\fn\|}{\gamma_\fC \gamma_\fA}\frac{1}{(1-\mu)(1-\nu)} \quad \text{and} \quad |\lambda_j (\bM^{-1} \bK_{\mu,\nu})| \leq \frac{1}{\gamma_\fM} |\lambda_j (\fK_{\mu,\nu})|,$$
proving the final result.
\end{proof}
\section{Conclusion}\label{sec:concl}
For general Petrov-Galerkin methods, we considered their operator preconditioning and introduced the novel bi-parametric framework. Several results were derived including bounds in Euclidean norm for the convergence of iterative solvers when preconditioning, with GMRES as a reference. These results pave the way toward new paradigms for preconditioning, as they allow to craft robust preconditioners, better understand the efficiency of existing ones and relate them to experimental results. We see direct applications in a variety of research areas including wave propagation problems \cite{gander2015applying}, singular perturbation theory \cite[Section 3]{axelsson2009equivalent}, fast numerical methods \cite{bebendorf2008hierarchical,bebendorf2009recompression} and iterative solvers \cite{saad1986gmres}. 

Future work avenues we foresee are: further analysis of second-kind Fredholm integral equations, with applications to acoustics and electromagnetics; deep learning of preconditioners for GMRES, and wavenumber asymptotic analysis for preconditioners. Also, we mention two promising research areas: (i) extension of \CA$_{\mu,\nu}^p$ to bounded perturbations of \CA$^p$ via \cite{blechta2021stability}; and (ii) characterization of Carleman class for compact operators using elliptic regularity theorems \cite{bessoud2006q}.

\section*{Acknowledgements}
The authors thank the support of Fondecyt Regular 1171491.\\
\bibliography{references}

\end{document}